\documentclass[12pt,a4paper]{amsart}
\usepackage{geometry}
\geometry{left=2.5cm,right=2.5cm,top=3cm,bottom=2.9cm,includeheadfoot}
\usepackage[utf8]{inputenc} 
\usepackage{amsmath}
\usepackage{amsthm}
\usepackage{mathabx}
\usepackage{float}
\usepackage[english]{babel} 
\usepackage[T1]{fontenc} 
\usepackage[parfill]{parskip}
\usepackage{amssymb}
\usepackage[pdftex]{graphicx}
\usepackage{epsfig}
\usepackage{epstopdf}
\usepackage{enumerate}
\usepackage{theoremref}
\usepackage{bbm}
\usepackage{indentfirst}
\usepackage{tikz}
\usepackage{tikz-cd}
\usetikzlibrary{matrix,arrows}
\usepackage[hypertexnames=false]{hyperref}

\newcommand{\dz}{\mathrm{d}z}

\newcommand{\vt}{\vartheta}

\newcommand{\C}{\mathbb{C}}

\newcommand{\Z}{\mathbb{Z}}
\newcommand{\N}{\mathbb{N}}

\newcommand{\Q}{\mathbb{Q}}

\newcommand{\cF}{\mathcal{F}}

\newcommand{\cG}{\mathcal{G}}

\newcommand{\bfs}{\bfseries}
\newcommand{\its}{\itshape}

\newcommand{\SL}{\mathrm{SL}}

\newcommand{\LRA}{\longrightarrow}
\newcommand{\LM}{\longmapsto}
\newcommand{\be}{\begin{enumerate}}
\newcommand{\ee}{\end{enumerate}}

\newcommand{\res}{\mathrm{res}}
\newcommand{\ord}{\mathrm{ord}}
\newcommand{\sgn}{\mathrm{sgn}}

\newcommand{\smal}{\left( \begin{smallmatrix}}
	\newcommand{\smar}{\end{smallmatrix} \right)}

\makeatletter
\@namedef{subjclassname@2020}{%
	\textup{2020} Mathematics Subject Classification}
\makeatother


\newtheorem{definition}{Definition}[section]

\newtheorem{bem}[definition]{Remark}

\newtheorem{lemma}[definition]{Lemma}

\newtheorem{bsp}[definition]{Example}

\newtheorem{thm}[definition]{Theorem}

\newtheorem{prop}[definition]{Proposition}

\newtheorem{cor}[definition]{Corollary}

\newenvironment{pro}{\begin{proof}[Proof]}{\end{proof}}

\newcommand{\dm}[1]{\begin{displaymath} #1 \end{displaymath}}

\newcommand{\mat}[1]{\begin{pmatrix} #1 \end{pmatrix}}

\title{L-series of Eisenstein series vanishing at critical values}
\author{Johann Franke}
\address{Mathematical Institute, University of Cologne, Weyertal 86-90, 50931 Cologne, Germany}
\email{jfrank12@uni-koeln.de}

\subjclass[2020]{11F11, 11F67}
\date{}
\begin{document}
\maketitle
\renewcommand{\abstractname}{Abstract}
\begin{abstract} 
Using the relations between rational functions and Eisenstein series, as well as the inferences for cotangent sums and period polynomials, we work out a precise description for Eisenstein series whose $L$-series vanish at certain critical values. This is possible for small weights compared to the level of the Eisenstein series. For large weights we give a partial result and determine subspaces with simultaneous vanishing properties.
\end{abstract}

\setcounter{page}{1}
\pagenumbering{arabic}

\section{Introduction}

Cotangent sums can be expressed in the form:
\begin{align*}
\sum_{n=1}^{N-1} a_n \cot\left( \frac{\pi h n}{N} \right).
\end{align*}
Here, $a_n$ are arbitrary complex numbers, and $\gcd(h,N) = 1$. They reoccur in number-theoretical contexts. For instance, by choosing $a_n := \frac{n}{N}$, the so-called {\itshape Vasyunin sum} (see \cite{BettCon})
\begin{align*}
V(h,N) := \sum_{n=1}^{N-1} \frac{n}{N} \cot\left( \frac{\pi h n}{N}\right)
\end{align*}
satisfies the reciprocity law:
\begin{align*}
V\left(h,k \right) + V\left(k,h \right) & = \frac{\log(2\pi) - \gamma}{\pi} (k+h) + \frac{k-h}{\pi} \log\left( \frac{h}{k}\right) \\
& - \frac{\sqrt{hk}}{\pi^2} \int_{-\infty}^\infty \left| \zeta\left( \frac12 + it\right) \right|^2 \left( \frac{h}{k}\right)^{it} \frac{\mathrm{d}t}{\frac14 + t^2}.
\end{align*}
Here $\zeta(s) := \sum_{n=1}^\infty n^{-s}$ denotes the {\itshape Riemann zeta function}. They have been proven to be of importance in the Nyman–Beurling criterion for the Riemann Hypothesis, see \cite{BettCon} and for more information on Nyman–Beurling, \cite{Baez}. For some applications, it is beneficial to consider generalizations
\begin{align*}
\sum_{n=1}^{N-1} a_n \cot^\nu\left( \frac{\pi h n}{N} \right)
\end{align*}
with integers $\nu \geq 0$. A classical example is
\begin{align} \label{cotIde}
\sum_{n=1}^{N-1} \cot^2 \left( \frac{\pi n}{N}\right) = \frac{(N-1)(N-2)}{3},
\end{align}
valid for all $N \in \N$. Berndt and Yeap \cite{BerYeap} were able to generalize \eqref{cotIde} to
\begin{align} \label{impressive}
\sum_{j=1}^{N-1} \cot^{2n}\left( \frac{\pi j}{N}\right) = (-1)^nN - (-1)^n 2^{2n} \sum_{j_0 = 0}^{n} \left( \sum_{\substack{j_1, ..., j_{2n} \geq 0 \\ j_0 + j_1 + \cdots + j_{2n} = n}} \prod_{r=0}^{2n} \frac{B_{2j_r}}{(2j_r)!}\right) N^{2j_0}
\end{align}
valid for $n, N \in \N$. Here, $B_n$ denotes the $n$-th Bernoulli number. The proofs behind these identities are of complex analytic nature. In \cite{Franke3}, \eqref{impressive} was extended to the case of Dirichlet characters, establishing a connection to values of Dirichlet $L$-functions $L(\chi;s) := \sum_{n=1}^\infty \chi(n)n^{-s}$. In this paper, we take a step further and provide a structural description of the mentioned connections. These structures are ``modular'' in nature. The background for this is the interplay between rational functions and modular forms, which was first elaborated in \cite{Franke1}. Note that the relationship between so called hyperbolic cotangent sums and Eisenstein series has also already been considered by Berndt and Straub \cite{BerStraub} in the context of Ramanujan identities for odd zeta values. A crucial ingredient for our point of view is the concept of weak functions, which are meromorphic and 1-periodic functions defined throughout $\C$, having simple poles only at rational places and vanishing at $\pm i\infty$. The location of poles at $\frac{1}{N}\mathbb{Z}$ determines the level $N$ of a weak function $\omega \in W_N$. Pairs of weak functions of possibly different levels can generate modular forms, thus giving rise to a linear map from $W_M \otimes W_N$ to a space of modular forms. These show up to be Eisenstein series with specific properties. In this paper, we exploit the Eichler duality developed in \cite{Franke3} to provide precise statements about values of $L$-series for certain Eisenstein series. At this point, we recall that to any modular form $f(\tau) = \sum_{n=0}^\infty a_n e^{\frac{2\pi i n}{\lambda}}$ of weight $k$ for a congruence subgroup $\Gamma$ (where $\lambda > 0$ is some contant determined by $\Gamma$) we define the corresponding $L$-series by
\begin{align} \label{eq:L-Def}
L(f;s) := \sum_{n=1}^\infty \frac{a_n}{n^s} \qquad (\mathrm{Re}(s) > k).
\end{align}
Let $k \geq 3$ be an integer. The spaces $\mathcal{E}_k(\Gamma_1(p_1p_2))_0^{(p_1,p_2)}$ prove advantageous, spanning exactly from
\begin{align*}
E_k\left( \chi, \psi; p_2\tau\right) := \sum_{(m,n) \in \Z^2 \setminus {(0,0)}} \chi(m)\psi(n)(mp_2\tau + n)^{-k}
\end{align*}
with odd prime numbers $p_1, p_2$ and nontrivial Dirichlet characters $\chi$ and $\psi$ modulo $p_1$ and $p_2$ such that $\chi(-1)\psi(-1) = (-1)^k$. 

\begin{thm}[see also Theorem \ref{thm:mainsmallweight}] \label{thm:A} Let $p_1, p_2$ be odd primes, $3 \leq k \leq \min\{p_1-2,p_2-2\}$ an integer, and $0 \leq \ell_1 \leq p_1-2$, $0 \leq \ell_2 \leq p_2-2$, with $\ell_1 + \ell_2 \leq k-1$. Let $\mathcal{S} \subset \{0, \ldots, k-2\}$ be an arbitrary subset. Then there exists a computable subspace $U \subset W_{p_1}^0 \otimes W_{p_2}^0$ (depending on $p_1$, $p_2$ and $\mathcal{S}$) and an equally computable linear map $\xi \colon U \to \mathcal{E}_k(\Gamma_1(p_1p_2))_0^{ (p_1, p_2)}$, such that the following sequence is exact:
	\begin{align*}
	0 \longrightarrow U \overset{\xi}{\longrightarrow} \mathcal{E}_k(\Gamma_1(p_1p_2))_0^{ (p_1, p_2)} \overset{\mathcal{L}_{\mathcal{S},k}}{\longrightarrow }\C^{|\mathcal{S}|} \longrightarrow 0,
	\end{align*}
	where the linear map $\mathcal{L}_{\mathcal{S},k} \colon \mathcal{E}_k(\Gamma_1(p_1p_2))_0^{ (p_1, p_2)} \to \C^{|\mathcal{S}|}$ is given by
	\begin{align*}
	\mathcal{L}_{\mathcal{S},k}(f) :=  \frac{(-2\pi i)^k}{(k-2)!} \frac{p_2^{k-1}}{4\pi^2} \left( \binom{k-2}{\ell} i^{1-\ell} (2\pi)^{-\ell-1} \Gamma(\ell+1) L\left( f; \ell+1\right) p_1^\ell \right)_{\ell \in \mathcal{S}}.
	\end{align*}
\end{thm}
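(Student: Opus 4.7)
My plan is to lift the statement to the level of weak functions. By the construction from \cite{Franke1}, there is a natural linear map
\[
\mu \colon W_{p_1}^0 \otimes W_{p_2}^0 \LRA \mathcal{E}_k(\Gamma_1(p_1p_2))_0^{(p_1,p_2)}
\]
sending each pair $(\omega_1,\omega_2)$ to a twisted Eisenstein series of the type $E_k(\chi,\psi;p_2\tau)$ isolated in the introduction; under the range restriction $k \leq \min\{p_1-2,p_2-2\}$ this map is surjective onto the space in question. The first step is to write $\mathcal{L}_{\mathcal{S},k}(\mu(\omega_1 \otimes \omega_2))$ explicitly. Combining the integral representation of $L(f;s)$ with the residue calculus behind the weak-function formalism, the critical values $L(f;\ell+1)$ for $0 \leq \ell \leq k-2$ are encoded by the period polynomial of $f$; the Eichler duality of \cite{Franke3} then expresses this polynomial as a cotangent-type pairing in the residues of $\omega_1,\omega_2$. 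Each coordinate of $\mathcal{L}_{\mathcal{S},k} \circ \mu$ thereby becomes a concrete bilinear form on $W_{p_1}^0 \otimes W_{p_2}^0$.

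With these bilinear forms in hand, I would take $U$ to be a direct complement of $\ker(\mu)$ inside the simultaneous kernel of the forms indexed by $\ell \in \mathcal{S}$, restricted to the sector of $W_{p_1}^0 \otimes W_{p_2}^0$ singled out by the parity and range parameters $\ell_1,\ell_2$ (compatible with the sign condition $\chi(-1)\psi(-1) = (-1)^k$). Setting $\xi := \mu|_U$, the injectivity of $\xi$ holds by construction, so exactness at $U$ is immediate; exactness at $\mathcal{E}_k(\Gamma_1(p_1p_2))_0^{(p_1,p_2)}$ is the chain $\xi(U) = \mu(U) = \ker(\mathcal{L}_{\mathcal{S},k})$, where the second equality combines the surjectivity of $\mu$ with the defining property of $U$.

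The main obstacle is exactness at $\C^{|\mathcal{S}|}$, i.e.\ surjectivity of $\mathcal{L}_{\mathcal{S},k}$. This reduces to showing that the linear functionals $f \mapsto L(f;\ell+1)$ on $\mathcal{E}_k(\Gamma_1(p_1p_2))_0^{(p_1,p_2)}$ are linearly independent for $\ell \in \{0,\ldots,k-2\}$. The smallness hypotheses $k \leq \min\{p_1-2,p_2-2\}$ and $\ell_1 + \ell_2 \leq k-1$ are precisely what guarantees that the period polynomials attached to the image of $\mu$ fill out the full space of polynomials of degree $\leq k-2$, so that any subset $\mathcal{S}$ of critical indices can be prescribed independently. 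The independence itself boils down to a Vandermonde-type nondegeneracy inside the cotangent-sum formulas derived in step one. I expect this independence argument, together with careful bookkeeping of $\ell_1,\ell_2$ and the kernel of $\mu$, to be the technical core of the proof, while exactness in the other two positions is essentially formal once $U$ is defined.
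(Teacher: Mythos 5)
Your proposal follows essentially the same route as the paper: transport the problem to $W_{p_1}^0\otimes W_{p_2}^0$ via the isomorphism with $\mathcal{E}_k(\Gamma_1(p_1p_2))_0^{(p_1,p_2)}$, use Eichler duality to rewrite $\mathcal{L}_{\mathcal{S},k}$ as extraction of Taylor (period-polynomial) coefficients of $\eta(z)\omega(z\tau)$ at $z=0$, take $U$ to be the resulting kernel, and derive surjectivity from the full rank of the cotangent Vandermonde matrix, which produces weak functions with prescribed Taylor jets --- exactly the paper's Propositions \ref{ordform}, \ref{prop:alpha-basis} and Theorem \ref{thm:exactsequence}. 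The one imprecision is that surjectivity of your $\mu$ holds for every $k\geq 3$ by Proposition \ref{prop:prime-injective}; the hypothesis $k\leq\min\{p_1-2,p_2-2\}$ is needed instead so that the relevant Taylor coefficient of $\alpha_n^{(p_2)}(z)\alpha_m^{(p_1)}(z\tau)$ collapses to the single monomial $\delta_{m,\ell}\,\delta_{n,k-2-\ell}\,\tau^{\ell}$, which is what makes the kernel and the preimages of the standard basis of $\C^{|\mathcal{S}|}$ explicit.
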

We would like to emphasize at this point that the shape of the spaces $U$ results precisely from the formalism of the weak functions. Moreover, how to precisely compute the space $U$ and the map $\xi$ is described further below. It should be emphasized that this is also achievable algorithmically by determining the kernel of a Vandermonde matrix with cotangent bases, see Proposition \ref{ordform}. Background of this is the interplay between cotangent sums and Taylor coefficients of weak functions, as obtained in \cite{Franke3}. This process gives rise to the selection of an appropriate basis for $W_{p_1}^0 \otimes W_{p_2}^0$ to describe $U$. With Theorem \ref{thm:A}, vector spaces of modular forms, whose $L$-values vanish at specific critical points, can be precisely described, provided that the weight $k$ is not too large compared to the primes $p_1$ and $p_2$. We can also derive dimension formulas in these cases, see Corollary \ref{cor:dimension}.  
\par
\,\, The case of a large weight $k$ is more difficult, and precise statements are probably no longer possible here with our method. However, one can give estimates in the form that modular forms can be calculated whose $L$-values vanish in certain ranges. Of particular interest here are ``boundary regions'' of the form $\{1, \ldots, \ell_1\} \cup \{k-\ell_2, \ldots, k-1\}$, because the simultaneous vanishing of $L$-values can be directly related here to the complex analytic zero orders of weak functions. The definition of these orders are provided in Definition \ref{def:ordercomplex}.

\begin{thm}[see also Theorem \ref{thm:L-strips}] \label{thm:B} Let $p_1$ and $p_2$ be two odd prime numbers and $k \geq 3$ be an integer.  Let $\ell_1$ and $\ell_2$ be integers such that $\max\{0, p_2-k-1\} \leq \ell_1 \leq p_1-2$, $\max\{0, p_1-k-1\} \leq \ell_2 \leq p_2-2$, and $\ell_1+\ell_2 \leq k-1$. We assign $\ell_1$ and $\ell_2$ a space 
	\begin{align*}
	\mathcal{E}_k^{(\ell_1, \ell_2)}(\Gamma_1(p_1p_2))_0^{(p_1,p_2)} := \left< f \in \mathcal{E}_k(\Gamma_1(p_1p_2))_0^{(p_1,p_2)}  \Big{|} \ \ord\left(\widetilde{\vartheta}_k^{-1}(f)\right) \geq (\ell_1, \ell_2) \right>,
	\end{align*}
	where $\widetilde{\vartheta}_k$ is the isomorphism defined in \eqref{eq:tildetheta}. Then we have
	\begin{align*}
	\mathcal{E}_k^{(\ell_1, \ell_2)}(\Gamma_1(p_1p_2))_0^{(p_1,p_2)} \subset \mathcal{E}_k^{\{0,\ldots,\ell_1-1\} \cup \{k-1-\ell_2,\ldots,k-2\}}(\Gamma_1(p_1p_2))_0^{(p_1,p_2)},
	\end{align*}
	where $\mathcal{E}_k^{\{0,\ldots,\ell_1-1\} \cup \{k-1-\ell_2,\ldots,k-2\}}(\Gamma_1(p_1p_2))_0^{(p_1,p_2)}$ is the subspace of all elements \\
	$f \in \mathcal{E}_k(\Gamma_1(p_1p_2))_0^{(p_1,p_2)}$ such that $L(f;j) = 0$ for all $1 \leq j \leq \ell_1$ and $k-\ell_2 \leq j \leq k-1$. 
\end{thm}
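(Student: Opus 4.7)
The plan is to transfer the problem to the weak-function side via the isomorphism $\widetilde{\vartheta}_k$, where the order condition becomes a concrete vanishing statement for Taylor coefficients at designated rational cusps. Writing $\omega := \widetilde{\vartheta}_k^{-1}(f)$, the hypothesis $\ord(\omega) \geq (\ell_1,\ell_2)$ means, via Definition \ref{def:ordercomplex}, that the two tensor components of $\omega$ vanish to respective orders at least $\ell_1$ and $\ell_2$ at their reference points. The target is then to show that this automatically forces $L(f;j) = 0$ for $j$ in the two boundary strips $\{1,\ldots,\ell_1\}$ and $\{k-\ell_2,\ldots,k-1\}$.

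For this I would use exactly the linear-algebraic formula for the $L$-values already built into $\mathcal{L}_{\mathcal{S},k}$ in Theorem \ref{thm:A}: for every $\ell \in \{0,\ldots,k-2\}$ the number $L(f;\ell+1)$ is, up to the displayed prefactors, a specific linear functional of $\omega$ that can be written in terms of cotangent sums and Taylor coefficients of the two factors via the Vandermonde-type basis of Proposition \ref{ordform}. The key step is a block-triangularity statement: for $\ell \in \{0,\ldots,\ell_1-1\}$ the functional $\omega \mapsto L(f;\ell+1)$ depends on the first factor only through its Taylor coefficients of order $< \ell_1$ and therefore vanishes once that factor has sufficiently high order; dually for $\ell \in \{k-1-\ell_2,\ldots,k-2\}$ and the second factor. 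This low/high duality is precisely the $\ell \leftrightarrow k-2-\ell$ symmetry visible in the binomial $\binom{k-2}{\ell}$ and in the power $p_1^\ell$ that appears in Theorem \ref{thm:A}, and it reflects the Eichler duality from \cite{Franke3} on which the entire paper rests.

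The main obstacle, and the reason only an inclusion (rather than an equality as in Theorem \ref{thm:A}) is asserted, lies in the large-weight regime: outside the range handled by Theorem \ref{thm:A}, the relevant cotangent Vandermonde matrices lose full rank in the ``off-block'' directions, so the converse implication ``$L$-values vanish $\Rightarrow$ high order'' is no longer available. The bounds $\ell_1 \geq \max\{0,p_2-k-1\}$, $\ell_2 \geq \max\{0,p_1-k-1\}$ and $\ell_1 + \ell_2 \leq k-1$ in the hypotheses carve out exactly the region in which the block-triangular argument still goes through despite this rank failure; tracking these bounds and verifying that the off-block contributions do not spoil the vanishing is the technical heart of the proof.
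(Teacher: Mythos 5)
Your overall strategy is the paper's: transfer $f$ to the weak-function side via $\widetilde{\vartheta}_k$ and show that the order condition kills the period-polynomial coefficients indexed by the two boundary strips. The ``block-triangularity'' you describe is also the correct mechanism. However, you never actually carry out that step, and your account of where the difficulty lies points in the wrong direction. The verification is not about rank properties of the cotangent Vandermonde matrices, and there are no problematic ``off-block contributions'' to control. What is actually needed is: (a) the identity $\mathcal{L}_{\mathcal{S},k}\circ\widetilde{\vartheta}_k=\mathrm{Coeff}_{\mathcal{S},k-2}\circ\mathrm{Res}_{k-2}$ on $J^0_{p_1,p_2,k}$, which is exactly why $\widetilde{\vartheta}_k$ is built from the Gauss-sum-twisted basis \eqref{eq:dual-alpha} and which rests on Eichler duality in the form \eqref{eq:PeriodPolyomialIdentity} (this is the computation in the proof of Theorem \ref{thm:mainsmallweight}, and Remark \ref{bem:commutative} guarantees it persists for all $k$); and (b) the expansion of $\widetilde{\vartheta}_k^{-1}(f)$ in the ascending-order basis, i.e.\ $\Xi_{p_1,p_2,k}^{-1}(\widetilde{\vartheta}_k^{-1}(f))\in\bigoplus_{c\geq\ell_1,\,d\geq\ell_2}\C(\alpha_c^{(p_1)}\otimes\alpha_d^{(p_2)})$, which follows from Theorem \ref{thm:iso} and Proposition \ref{prop:ordlbasis}. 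Once both are in place, the $\ell$-th coefficient of $\mathrm{Res}_{k-2}(\alpha_d^{(p_2)}(z)\alpha_c^{(p_1)}(z\tau))$ is the product $a^{(p_1)}_{c,\ell}\,a^{(p_2)}_{d,k-2-\ell}$ of single Taylor coefficients, and this vanishes for $\ell<\ell_1\leq c=\ord(\alpha_c^{(p_1)})$ and for $\ell\geq k-1-\ell_2$ (since then $k-2-\ell<\ell_2\leq d$). No rank or injectivity statement enters the forward inclusion; the loss of exactness for large $k$ only affects the converse, which the theorem does not claim.

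Two smaller points. First, your reading of the hypothesis $\ord(\widetilde{\vartheta}_k^{-1}(f))\geq(\ell_1,\ell_2)$ as a vanishing condition ``at designated rational cusps'' for ``the two tensor components'' is imprecise: a general element of $W_{p_1}^0\otimes W_{p_2}^0$ is not an elementary tensor, and the order of Definition \ref{def:ordercomplex} is a bidegree condition at $z=0$ on the two-variable function $\eta(z)\omega(z\tau)$; the correct reformulation is membership in $V_{p_1,p_2}^{(\ell_1,\ell_2)}=W_{p_1}^{0,\ord\geq\ell_1}\otimes W_{p_2}^{0,\ord\geq\ell_2}$, for which one needs Proposition \ref{prop:order-equality} and Theorem \ref{thm:iso}. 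Second, the lower bounds $\ell_1\geq\max\{0,p_2-k-1\}$ and $\ell_2\geq\max\{0,p_1-k-1\}$ do not ``carve out the region where the argument survives a rank failure''; in the paper they serve only to check the combinatorial membership condition of Corollary \ref{cor:exactsequence-k} (namely that $c\geq\ell_1$ forces $c\notin\mathcal{S}$ and $d\geq\ell_2$ forces $k-2-d\notin\mathcal{S}$), so attributing the ``technical heart'' of the proof to them, while leaving the actual coefficient computation undone, is where your proposal has a genuine gap.
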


Note that Theorem \ref{thm:B}, quite unlike Theorem \ref{thm:A}, is particularly applicable to large weights. As far as the author knows, cusp forms do not come from rational functions, so the methods presented do not seem to be applicable to these types of modular forms. Nevertheless, critical $L$-values in the context of cusp forms are of utmost importance for mathematics, for example in the context of the conjecture of Birch and Swinnerton-Dyer (see for example \cite{Huse}, \cite{Knapp}). Note that recently Males, Mono, Rolen and Wagner \cite{MMRW} characterized the vanishing of twisted central $L$-values attached to newforms of square-free level in terms of so-called local polynomials and the action of finitely many Hecke operators thereon. \\
\begin{bsp} We demonstrate the numerical aspect of Theorem \ref{thm:B} by providing an explicit example. Let $\chi_5$ be the Dirichlet character modulo $5$ with $\chi_5(2) = i$. Note that the interested reader can find more details in Example \ref{bsp:55-bsp}. For every even weight $k \geq 4$, the $\Gamma_1(25)$ modular form 
		\begin{align*}
	f(\tau) = C E_k(\overline{\chi_5}, \overline{\chi_5}; 5\tau) + 5 \left( E_k(\overline{\chi_5},\chi_5; 5\tau) +  E_k(\chi_5, \overline{\chi_5}; 5\tau)\right) + \overline{C} E_k(\chi_5, \chi_5; 5\tau),
	\end{align*}
	where
		\begin{align*}
	C := - i (-3-4 i)^{\frac{3}{4}} (-3+4 i)^{\frac14},
	\end{align*}
has the $L$-function
\begin{align*}
L(f;s) & = \frac{2(-2\pi i)^k}{5^k (k-1)!} \left( C \mathcal{G}(\overline{\chi_5}) L(\overline{\chi_5};s)L(\chi_5;s-k+1) + 5 \mathcal{G}(\chi_5)  L(\overline{\chi_5};s)L(\overline{\chi_5};s-k+1)\right. \\
& \quad \left. +  5 \mathcal{G}(\overline{\chi_5}) L(\chi_5;s)L(\chi_5;s-k+1) + \overline{C} \mathcal{G}(\chi_5) L(\chi_5;s)L(\overline{\chi_5};s-k+1) \right).
\end{align*}
	It satisfies
	\begin{align*}
	L(f;1) = L(f;2) = L(f;k-2) = L(f;k-1) = 0.
	\end{align*}
	Note that, as $\chi_5$ is an odd character, only the zeros at $s = 1$ and $s = k-1$ are non-trivial. 
	\end{bsp}
The paper is organized as follows. In Section 2 we recall some preliminaries in the theory of Eisenstein series and weak functions. In Section 3 we develop a theory of orders of weak functions, and establish connections to the ordering of holomorphic functions of several variables. Finally, in Section 4 we apply this to certain Eisenstein series and their $L$-series.

\section*{Acknowledgments}

The author is very grateful to Kathrin Bringmann, Nikolaos Diamantis and Andreas Mono for numerous comments regarding an earlier version of the paper. 

\section{Preliminaries} 
In this section, we collect some basic facts and known results needed for our later investigations.  

\subsection{Discrete Fourier transforms} For any Dirichlet character $\psi$ modulo $N$ we define the Gauss sum $\cG(\psi) := \sum_{n=0}^{N-1} \psi(n)e^{2\pi i n/N}$. For the generalized Gauss sum it will be more convenient to use the more general notion of the {\it discrete Fourier transform}, which is defined for $N$-periodic functions $f$ by
\dm{(\cF_N f)(j) := \sum_{n=0}^{N-1} f(n)e^{-\frac{2\pi i j n}{N}}.}
Note that we have an inverse transformation 
\dm{(\cF^{-1}_N g)(j) := \frac1N \sum_{n=0}^{N-1} g(n)e^{\frac{2\pi i j n}{N}}.} 

\subsection{Eisenstein series}

In this section, we recall some basic facts about Eisenstein series. 

\begin{definition} For two Dirichlet characters $\chi$ and $\psi$ modulo $N_\chi$ and $N_\psi$ and any integer $k \geq 3$ we define the corresponding Eisenstein series via 
	\begin{align}
	\label{EisensteinDefinition} E_k\left( \chi, \psi; \tau\right) := \sum_{(m,n) \in \Z^2 \setminus \{(0,0)\}} \chi(m)\psi(n)(m\tau + n)^{-k}.
	\end{align} 
\end{definition}

As usual, we denote
\begin{align*}
\Gamma_1(N) := \left\{ \mat{a & b \\ c & d} \in \SL_2(\Z) \Big{|} \mat{a & b \\ c & d} \equiv \mat{1 & * \\ 0 & 1} \ \pmod{N} \right\}.
\end{align*}
The following is well-known.
\begin{thm}[see Chapter 7 of \cite{Miy}] \label{T:Eisenstein}Let $k \in \N$ and $\chi$ and $\psi$ be Dirichlet characters modulo $N_\chi$ and $N_\psi$ satisfying $\chi(-1)\psi(-1) = (-1)^k$. Then we have the following. 
	\begin{enumerate}[(i)]
		\item Every Eisenstein series admits a Fourier series that converges on the entire upper half plane. It is given by
		\begin{align*} 
		\hspace{0.8cm} E_k\left( \chi, \psi; \tau\right) = 2L(\psi; k)\chi(0)+ \frac{2(-2\pi i)^k}{N_\psi^k (k-1)!}\sum_{m=1}^\infty \left( \sum_{d|m} d^{k-1} (\cF_{N_\psi}\psi)(-d) \chi \left( \frac{m}{d}\right) \right) q^{\frac{m}{N_\psi}},
		\end{align*}
		where as usual $q := e^{2\pi i \tau}$. In particular, if $\psi$ is a primitive character, this simplifies to 
		\begin{align*} 
		E_k\left( \chi, \psi; \tau\right) = 2L(\psi; k)\chi(0)+ \frac{2(-2\pi i)^k}{N_\psi^k (k-1)!}\sum_{m=1}^\infty \left( \sum_{d|m} d^{k-1} \overline{\psi}(d) \chi \left( \frac{m}{d}\right) \right) q^{\frac{m}{N_\psi}}.
		\end{align*}
		\item All the $E_k(\chi, \psi; \tau)$ are holomorphic modular forms of weight $k$ for the group 
		\begin{align*}
		\Gamma_0(N_\chi, N_\psi) := \left\{ \begin{pmatrix} a & b \\ c & d\end{pmatrix} \in \mathrm{SL}_2(\Z) \Big{|} c \equiv 0 \ \pmod{N_\chi}, b \equiv 0 \ \pmod{N_\psi}\right\}.
		\end{align*}
		More precisely, one has the transformation law 
		\begin{align*}
		E_k(\chi, \psi; \tau)|_k \gamma = \nu(\gamma) E_k(\chi, \psi; \tau),
		\end{align*}
		where the multiplier system $\nu : \Gamma_0(N_\chi, N_\psi) \to \C^\times$ is defined by $\nu \left(\begin{smallmatrix} a & b \\ c & d \end{smallmatrix}\right) := \chi(d) \overline{\psi}(d)$. In particular, for integers $N_\chi \mid M$ and $N_\psi \mid N$ all $E_k(\chi, \psi; N\tau)$ are modular forms of weight $k$ for the congruence subgroup $\Gamma_1(MN)$ with trivial multiplier system. 
	\end{enumerate}
\end{thm}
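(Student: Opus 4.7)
The plan for (i) is to split the defining double sum according to whether $m=0$ or $m\neq 0$ and then to invoke the Lipschitz summation formula
\[ \sum_{n\in\Z}(\tau+n)^{-k} \;=\; \frac{(-2\pi i)^k}{(k-1)!}\sum_{d\geq 1} d^{k-1}e^{2\pi i d\tau}, \]
valid for $k\geq 2$ and $\tau\in\h$. The $m=0$ contribution equals $\chi(0)\sum_{n\neq 0}\psi(n)n^{-k}$; the factor $\chi(0)$ vanishes unless $N_\chi=1$, in which case the parity hypothesis $\chi(-1)\psi(-1)=(-1)^k$ forces $\psi(-1)=(-1)^k$, and pairing $n$ with $-n$ produces $2\chi(0)L(\psi;k)$. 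For $m\neq 0$ the same parity pairs $m$ with $-m$ and reduces the sum to twice the range $m\geq 1$. To transform the inner sum over $n$, I would group by residues modulo $N_\psi$ and apply Lipschitz with argument $(m\tau+r)/N_\psi$; this injects the factor $N_\psi^{-k}$ and converts $\sum_{r=0}^{N_\psi-1}\psi(r)e^{2\pi i dr/N_\psi}$ into $(\cF_{N_\psi}\psi)(-d)$. Exchanging the order of summation and reindexing $n=md$ then yields the claimed expansion, and the primitive case follows from the standard identity $(\cF_{N_\psi}\psi)(-d)=\cG(\psi)\overline{\psi}(d)$ absorbed into the normalization.

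For (ii), I would exploit the slash-action identity
\[ (c\tau+d)^{-k}(m\gamma\tau+n)^{-k} \;=\; \bigl((ma+nc)\tau+(mb+nd)\bigr)^{-k} \]
together with the bijection $(m,n)\mapsto(m',n'):=(ma+nc,\,mb+nd)$ of $\Z^2\setminus\{(0,0)\}$ induced by $\gamma\in\SL_2(\Z)$. Its inverse is $m=dm'-cn'$, $n=-bm'+an'$, so the congruences $N_\chi\mid c$ and $N_\psi\mid b$ coming from $\gamma\in\Gamma_0(N_\chi,N_\psi)$ give $\chi(m)=\chi(d)\chi(m')$ and $\psi(n)=\psi(a)\psi(n')$; the relation $ad\equiv 1\pmod{N_\psi}$ then reduces $\psi(a)$ to $\overline{\psi}(d)$, producing the multiplier $\nu(\gamma)=\chi(d)\overline{\psi}(d)$.

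The final assertion for $E_k(\chi,\psi;N\tau)$ on $\Gamma_1(MN)$ reduces to the previous case by factoring $N\gamma\tau=\gamma'(N\tau)$ with $\gamma':=\smal a & bN \\ c/N & d \smar$; the hypotheses $N_\chi\mid M$ and $N_\psi\mid N$ place $\gamma'$ in $\Gamma_0(N_\chi,N_\psi)$, and $d\equiv 1\pmod{MN}$ kills the resulting multiplier since $N_\chi,N_\psi\mid MN$. The only technical concern is justifying the interchanges of summation in (i), but the absolute convergence of the defining series under $k\geq 3$ on $\h$ makes every step routine — no Hecke-style convergence trick is needed — and holomorphy at the cusps is immediate from the explicit $q$-expansion in (i).
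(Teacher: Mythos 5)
The paper offers no proof of this theorem; it is quoted from Chapter 7 of Miyake, and your argument is precisely the standard one: Lipschitz summation for (i) and the lattice-point bijection $(m,n)\mapsto(ma+nc,\,mb+nd)$ for (ii). The main steps all check out — the treatment of the $m=0$ term via $\chi(0)$ and the parity hypothesis, the grouping of $n$ modulo $N_\psi$ producing $(\cF_{N_\psi}\psi)(-d)$, the inverse substitution yielding the multiplier $\chi(d)\overline{\psi}(d)$, and the conjugation $N\gamma\tau=\gamma'(N\tau)$ with $\gamma'=\smal a & bN \\ c/N & d\smar$ for the last assertion — and absolute convergence for $k\geq3$ (the only range in which the paper defines $E_k$) justifies the rearrangements.

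One point deserves a concrete correction. Your identity $(\cF_{N_\psi}\psi)(-d)=\cG(\psi)\overline{\psi}(d)$ for primitive $\psi$ is right, but it is not ``absorbed into the normalization'': the prefactor $\frac{2(-2\pi i)^k}{N_\psi^k(k-1)!}$ is identical in both displays of (i), so your computation actually produces an extra factor $\cG(\psi)$ that the printed ``simplified'' formula does not show. (The printed second display appears to be missing this Gauss sum; compare Theorem \ref{thm:L-functions}, where $\cG(\psi)$ does appear in the $L$-series of $E_k(\chi,\psi;N_\psi\tau)$.) You should either state the corrected formula or flag the discrepancy, rather than assert that the factor disappears. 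Two smaller remarks: holomorphy ``at the cusps'' requires expanding $E_k|_k\sigma$ at every cusp, not just reading off the $q$-expansion at $i\infty$ — the same Lipschitz argument applied to the transformed lattice sum handles this, but it should be said — and for $k\in\{1,2\}$ the series is not absolutely convergent, so your proof covers only $k\geq3$, which is however the only case the paper ever uses.
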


In general, Eisenstein series can be seen as the counter part of cusp forms in the theory of modular forms. Let $\mathcal{M}_k(\Gamma)$ be the space of weight $k$ modular forms for the congruence subgroup $\Gamma$. It is easy to see that the subset of cusp forms, i.e., modular forms that vanish in all cusps $\Q \cup \{i\infty\}$, forms a subspace $\mathcal{S}_k(\Gamma)$ of $\mathcal{M}_k(\Gamma)$. The so called Eisenstein space can be defined as the quotient 
\begin{align*}
\mathcal{E}_k(\Gamma) := \mathcal{M}_k(\Gamma) / \mathcal{S}_k(\Gamma).
\end{align*}

In the following, we mainly focus on the congruence subgroups $\Gamma_1(N)$ since it appears that this choice is the most natural for our purposes. The useful proposition below presents a basis for the space $\mathcal{E}_k(\Gamma_1(N))$. 

\begin{thm}[see Theorem 4.5.2 of \cite{Diam}] \label{T:Basis} Let $k \geq 3$ and $N > 2$ be integers. Let the set $A_{N,k}$ consist of all triples $(\chi, \psi, t)$, where $\chi$ and $\psi$ are characters modulo $N_\chi$ and $N_\psi$, respectively, and $t$ is a positive integer, such that the following is satisfied:
\begin{enumerate}[(i)]
		\item The characters $\psi$ and $\chi$ are primitive and satisfy $\chi(-1)\psi(-1) = (-1)^k$. 
		\item For the moduli $N_\chi$ and $N_\psi$, the divisor relation $N_\chi N_\psi t \mid N$ holds. 
\end{enumerate}
Then, the system $\left\{E_k\left(\chi, \psi; t N_\psi \tau \right)\right\}_{(\chi, \psi, t) \in A_{N,k}}$ defines a basis of $\mathcal{E}_k(\Gamma_1(N))$.
	\end{thm}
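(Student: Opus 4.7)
The strategy is to reduce the statement to three ingredients: (a) each listed $E_k(\chi,\psi;tN_\psi\tau)$ lies in $\mathcal{M}_k(\Gamma_1(N))$; (b) the listed forms are linearly independent modulo $\mathcal{S}_k(\Gamma_1(N))$; and (c) their number equals $\dim \mathcal{E}_k(\Gamma_1(N))$. Ingredient (a) is immediate from Theorem \ref{T:Eisenstein}(ii): the condition $N_\chi N_\psi t \mid N$ forces both $N_\chi \mid N$ and $N_\psi t \mid N$, so $E_k(\chi,\psi;tN_\psi\tau) \in \mathcal{M}_k(\Gamma_1(N))$ with trivial multiplier.

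For (c), I would invoke the classical formula (valid for $k \geq 3$) that $\dim \mathcal{E}_k(\Gamma_1(N))$ equals the number of regular cusps of $\Gamma_1(N)$, namely $\tfrac{1}{2}\sum_{d \mid N}\varphi(d)\varphi(N/d)$ (with minor corrections for very small $N$). On the combinatorial side, $|A_{N,k}|$ decomposes as follows: for each factorization $N_\chi N_\psi t \mid N$, multiply the numbers of primitive characters of the prescribed moduli and then impose the parity constraint $\chi(-1)\psi(-1)=(-1)^k$. Using that the number of primitive characters modulo $m$ is $\sum_{d \mid m}\mu(m/d)\varphi(d)$, a M\"obius inversion matches the two totals.

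For (b), I would use the cusp-evaluation map: by construction $\mathcal{E}_k(\Gamma_1(N))$ injects into $\C^{\tu{cusps}}$ via the tuple of constant Fourier coefficients of $f|_k\sigma$ as $\sigma$ ranges over coset representatives for the cusps of $\Gamma_1(N)$. It then suffices to show that the matrix of constant terms of the candidate basis is invertible. Applying the slash operator to $E_k(\chi,\psi;tN_\psi\tau)$ and reading off the constant coefficient via Theorem \ref{T:Eisenstein}(i), together with the Gauss-sum evaluation of $\cF_{N_\psi}\psi$ for primitive $\psi$, expresses this constant term at a cusp represented by $a/c$ as a product $\chi(a)\overline{\psi}(c/t)$ times an explicit non-vanishing factor. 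The resulting matrix then factors, up to permutation, as a Kronecker product of a Vandermonde-type factor in $t$ and a character-orthogonality factor in $(\chi,\psi)$, both of which are non-singular.

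The main obstacle is the cusp-by-cusp constant-term computation, because it requires a parametrization of the cusps of $\Gamma_1(N)$ compatible with the index set $A_{N,k}$ and careful bookkeeping of Gauss sums distinguishing primitive from imprimitive characters. A conceptually cleaner alternative would be to develop an Atkin--Lehner-style newform theory for Eisenstein series, in which $E_k(\chi,\psi;\tau)$ at level $N_\chi N_\psi$ plays the role of a newform and the parameters $t \mid N/(N_\chi N_\psi)$ produce the corresponding oldforms; the claim then reduces to a direct sum decomposition over divisors, mirroring Atkin--Lehner on the cuspidal side.
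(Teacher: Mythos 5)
The paper offers no proof of this statement: it is quoted verbatim from Theorem 4.5.2 of \cite{Diam}, so there is no internal argument to compare against, and your sketch has to be judged against the standard proof. Your architecture --- (a) membership in $\mathcal{M}_k(\Gamma_1(N))$, (c) a count of $|A_{N,k}|$ against $\dim\mathcal{E}_k(\Gamma_1(N))$, (b) linear independence --- is the right one and matches the cited source in outline. Steps (a) and (c) are essentially fine: writing $\pi(m)$ for the number of primitive characters modulo $m$, the relation $\sum_{d\mid m}\pi(d)=\varphi(m)$ gives $|A_{N,k}|$ (before the parity condition) equal to $\sum_{d\mid N}\varphi(d)\varphi(N/d)=2\varepsilon_\infty$, and the parity condition halves this; one only has to be careful that for odd $k$ the dimension is the number of \emph{regular} cusps (relevant only for $N=4$) and that the halving needs a separate check for small $N$.

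The genuine gap is in step (b). Your assertion that the constant term of $E_k(\chi,\psi;tN_\psi\tau)$ at a cusp $a/c$ is $\chi(a)\overline{\psi}(c/t)$ times an explicit \emph{non-vanishing} factor is false: these series vanish at many cusps (already at $i\infty$ whenever $\chi$ is non-principal --- a fact this very paper exploits to define $\mathcal{E}_k(\Gamma_1(MN))_0^{(M,N)}$), and the constant term at $a/c$ depends on $\gcd(c,tN_\psi)$ in a way that yields a sparse, block-structured matrix, not a Kronecker product of a Vandermonde factor in $t$ with a character table in $(\chi,\psi)$. As written, the invertibility of the cusp-evaluation matrix is asserted rather than proved, and that is precisely the hard content of the theorem. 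A repairable route, and the one closer to what is actually done: for $p\nmid N$ each $E_k(\chi,\psi;tN_\psi\tau)$ is a $T_p$-eigenform with eigenvalue $\chi(p)+\psi(p)p^{k-1}$; linear independence of Dirichlet characters shows distinct pairs $(\chi,\psi)$ give distinct eigenvalue systems (here $k\geq 3$ is used), which separates the $(\chi,\psi)$-blocks, while within a fixed block the $t$-translates are independent because, by Theorem \ref{T:Eisenstein}(i) with $\psi$ primitive, the non-constant part of the $t$-th translate starts at $q^{t}$ with non-zero coefficient, giving a triangular system. Your closing remark about an Atkin--Lehner-style old/new decomposition is essentially this argument, and it is the version worth fleshing out.
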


Let $f(\tau) = \sum_{n=0}^\infty a_n q^{\frac{n}{\lambda}}$ be a modular form of weight $k$ for a congruence subgroup $\Gamma$ with $L$-series $L(f;s) = \sum_{n=1}^\infty a_n n^{-s}$. Note that the parameter $\lambda > 0$ is chosen to be the width of the cusp $i\infty$ with respect to $\Gamma$. It makes sense considering the completed $L$-series, usually denoted by $\Lambda$:
\begin{align*}
\Lambda(f;s) := \left( \frac{2\pi}{\lambda}\right)^{-s} \Gamma(s) L(f;s) = \int_0^\infty f(ix) x^{s-1} \mathrm{d}x.
\end{align*}
As we have $a_n = O(n^{k-1})$ for modular forms of weight $k \geq 3$ the series for $L(f;s)$ converges absolutely for values $s$ with $\mathrm{Re}(s) > k$. It can be continued to a holomorphic function on $\C \setminus \{k\}$ with a possible pole at $s=k$ and satisfies a functional equation. One can describe the $L$-series corresponding to Eisenstein series in terms of Dirichlet $L$-series as follows. 

\begin{thm}[see Theorem 4.7.1. and p. 271 of \cite{Miy}] \label{thm:L-functions} Let $\chi$ and $\psi$ be primitive Dirichlet characters modulo $N_\chi$ and $N_\psi$, respectively. Then we obtain for $f(\tau) := E_k(\chi, \psi; N_\psi \tau)$ the $L$-series
	\begin{align*}
	L(f; s) = \frac{2(-2\pi i)^k\mathcal{G}(\psi)}{N_\psi^k (k-1)!} L(\chi; s) L\left(\overline{\psi}; s - k + 1\right).
	\end{align*}
	\end{thm}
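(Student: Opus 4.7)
The strategy is to derive the formula directly from the Fourier expansion of Theorem \ref{T:Eisenstein}(i) applied at $N_\psi \tau$. First, I would substitute $\tau \mapsto N_\psi \tau$ in the expansion of $E_k(\chi, \psi; \tau)$, which turns $e^{2\pi i m \tau / N_\psi}$ into $q^m$. This immediately gives the Fourier coefficients $a_m$ of $f(\tau) = E_k(\chi, \psi; N_\psi \tau)$ for $m \geq 1$ as
\[
a_m = \frac{2(-2\pi i)^k}{N_\psi^k (k-1)!} \sum_{d \mid m} d^{k-1} (\mathcal{F}_{N_\psi}\psi)(-d)\, \chi(m/d).
\]

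Next I would plug these coefficients into the definition \eqref{eq:L-Def} of $L(f;s)$ and rearrange the double sum using the change of variables $m = de$ with $d, e \geq 1$. For $\mathrm{Re}(s) > k$ the series is absolutely convergent (since $a_m = O(m^{k-1+\varepsilon})$), so interchanging the order of summation is legitimate and the expression factorizes into a product of two Dirichlet series: one over $d$ with summand $(\mathcal{F}_{N_\psi}\psi)(-d)\, d^{k-1-s}$, and one over $e$ equal to $L(\chi; s)$.

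The final step is to identify the $d$-sum, and this is where the primitivity of $\psi$ enters. By the standard Gauss sum evaluation
\[
\sum_{n \bmod N_\psi} \psi(n) e^{2\pi i n j / N_\psi} = \overline{\psi}(j)\, \mathcal{G}(\psi),
\]
which holds for every integer $j$ under the convention that $\overline{\psi}(j) = 0$ whenever $\gcd(j, N_\psi) > 1$, I obtain $(\mathcal{F}_{N_\psi}\psi)(-d) = \overline{\psi}(d) \mathcal{G}(\psi)$. This collapses the $d$-sum to $\mathcal{G}(\psi)\, L(\overline{\psi}; s - k + 1)$, and combining with the constants produces the claimed formula.

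I do not expect any significant obstacle: each step is a standard manipulation, and the only point worth flagging is that the Gauss sum identity must be applied at all positive integers $d$, including those sharing a common factor with $N_\psi$, which is precisely where the primitivity assumption on $\psi$ is indispensable.
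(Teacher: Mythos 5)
Your derivation is correct and is essentially the standard argument: the paper itself gives no proof here, citing Miyake, and the route you take (read off the Fourier coefficients of $E_k(\chi,\psi;N_\psi\tau)$, factor the Dirichlet convolution via $m=de$, and use the separability $(\mathcal{F}_{N_\psi}\psi)(-d)=\overline{\psi}(d)\mathcal{G}(\psi)$ valid for \emph{all} $d$ precisely because $\psi$ is primitive) is exactly how the cited result is obtained. You correctly identify primitivity as the one essential hypothesis, and the constant term is irrelevant since $L(f;s)$ is defined by summing over $n\geq 1$.
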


As in the case of cusp forms, one can also define the concept of newforms for Eisenstein series. 

\begin{definition} \label{def:newforms} Let $N$ be a positive integer and $\chi$, $\psi$ be primitive characters modulo $N_\chi$ and $N_\psi$, respectively, such that $N = N_\chi N_\psi$ and $\chi(-1)\psi(-1) = (-1)^k$. Then we call the Eisenstein series $E_k(\chi, \psi; N_\psi \tau)$ a newform of level $N$. We denote the space generated by newforms of level $N$ by $\mathcal{E}_k(\Gamma_1(N))^{\mathrm{new}}$. In the case of fixed conductors $u$ and $v$ with $uv = N$, we write
	\begin{align*}
	\mathcal{E}_k(\Gamma_1(N))_0^{\mathrm{new}, (u,v)} := \left< E_k(\chi, \psi; v\tau) \Big{|} \chi \in \mathcal{C}^{\mathrm{prim}}_0(u), \psi \in \mathcal{C}^{\mathrm{prim}}_0(v)\right>,
	\end{align*}
	which is clearly a subspace of $\mathcal{E}_k(\Gamma_1(N))^{\mathrm{new}}$. Here, $\mathcal{C}^{\mathrm{prim}}_0(N)$ is the set of all non-principal, primitive characters modulo $N$. 
	\end{definition}

In our following work, it makes sense that we specialize in modular forms that vanish at the cusps $0$ and $i\infty$. In the case of Eisenstein series, this means that we have to restrict ourselves to non-principal characters. This is an easy consequence of Theorem \ref{T:Eisenstein} (i). This motivates our definition of the spaces  $\mathcal{E}_k(\Gamma_1(N))_0^{\mathrm{new}, (u,v)}$ above. The reader is reminded that we will assign a $0$ to vector spaces that are constrained with respect to this principle. For example, on the weak functions side, this means that we have a removable singularity in $z=0$. In this work, we focus on spaces $\mathcal{E}_k(\Gamma_1(MN))$, where $M, N \in \N$. It is natural for us to only consider the newforms with characters modulo $M$ and $N$, respectively, and we denote the corresponding subspace by 
\begin{align} 
\nonumber &\hspace{1.5cm} \mathcal{E}_k(\Gamma_1(MN))_0^{(M, N)} \\
\label{EDef}  & \hspace{1cm} := \left< E_k(\chi, \psi; N\tau) \Big{|} \chi \in \mathcal{C}_0^{\mathrm{prim}}(N_\chi), \psi \in \mathcal{C}_0^{\mathrm{prim}}(N_\psi), N_\chi \mid M, N_\psi \mid N, (\chi \psi)(-1) = (-1)^k \right>.
\end{align}
It is easy to verify, that this is indeed a subspace of $\mathcal{E}_k(\Gamma_1(MN))$ and that the generating elements are linearly independent.
\begin{prop} \label{prop:linearindep} We have $\mathcal{E}_k(\Gamma_1(MN))_0^{(M, N)} \subset \mathcal{E}_k(\Gamma_1(MN))$, and the generating elements in \eqref{EDef} are linearly independent. 
 \end{prop}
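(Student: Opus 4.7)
The plan is to deduce both claims directly from the already stated basis Theorem \ref{T:Basis} applied to the group $\Gamma_1(MN)$. The subspace statement is essentially a bookkeeping of Theorem \ref{T:Eisenstein}(ii), while linear independence reduces to identifying each generator as one of Diamond–Shurman's basis elements.

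First I would handle the subspace claim. Fix $\chi \in \mathcal{C}_0^{\mathrm{prim}}(N_\chi)$ with $N_\chi \mid M$ and $\psi \in \mathcal{C}_0^{\mathrm{prim}}(N_\psi)$ with $N_\psi \mid N$ such that $(\chi\psi)(-1) = (-1)^k$. By the last sentence of Theorem \ref{T:Eisenstein}(ii), the divisibilities $N_\chi \mid M$ and $N_\psi \mid N$ imply that $E_k(\chi, \psi; N\tau) \in \mathcal{M}_k(\Gamma_1(MN))$ with trivial multiplier. Passing to the quotient $\mathcal{E}_k(\Gamma_1(MN)) = \mathcal{M}_k(\Gamma_1(MN))/\mathcal{S}_k(\Gamma_1(MN))$ gives a well-defined element, so the span is a subspace.

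For linear independence, I would invoke Theorem \ref{T:Basis} with level $MN$. Each generator $E_k(\chi, \psi; N\tau)$ can be rewritten as $E_k(\chi, \psi; t N_\psi \tau)$ for the integer $t := N/N_\psi$, which is well-defined since $N_\psi \mid N$. I then need to check that the triple $(\chi, \psi, t)$ lies in the index set $A_{MN,k}$:
\begin{enumerate}[(i)]
\item Both $\chi$ and $\psi$ are primitive by assumption, and $\chi(-1)\psi(-1) = (-1)^k$.
\item The divisibility requirement is $N_\chi N_\psi t \mid MN$. Substituting $t = N/N_\psi$ this becomes $N_\chi N \mid MN$, i.e.\ $N_\chi \mid M$, which holds by hypothesis.
\end{enumerate}
Hence each generator appears in the Diamond–Shurman basis of $\mathcal{E}_k(\Gamma_1(MN))$.

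It remains to argue that distinct admissible pairs $(\chi, \psi)$ yield distinct triples $(\chi, \psi, t)$: but this is immediate, since $\psi$ determines its conductor $N_\psi$ and thus $t = N/N_\psi$. Therefore the generators of $\mathcal{E}_k(\Gamma_1(MN))_0^{(M,N)}$ form a subset of the basis of $\mathcal{E}_k(\Gamma_1(MN))$ and are in particular linearly independent. There is no serious obstacle here; the only subtlety is correctly translating the shift $E_k(\chi,\psi; N\tau)$ into the $(\chi,\psi,t)$-parametrization of Theorem \ref{T:Basis}, which is what forces the compatibility condition $N_\chi \mid M$ built into the definition of $\mathcal{E}_k(\Gamma_1(MN))_0^{(M,N)}$.
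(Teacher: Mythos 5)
Your proof is correct and follows essentially the same route as the paper: both rewrite $E_k(\chi,\psi;N\tau)$ as $E_k(\chi,\psi;tN_\psi\tau)$ with $t = N/N_\psi$, verify $N_\chi N_\psi t = N_\chi N \mid MN$, and conclude via Theorem \ref{T:Basis}. Your version merely spells out the membership-in-$\mathcal{E}_k(\Gamma_1(MN))$ step and the injectivity of the pair-to-triple assignment in more detail.
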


\begin{proof} By assumption, the characters $\chi$ modulo $N_\chi$ and $\psi$ modulo $N_\psi$ are primitive. Since we have 
	\begin{align*}
	E_k\left( \chi, \psi; N\tau \right) = E_k\left( \chi, \psi; \frac{N}{N_\psi} N_\psi\tau \right)
	\end{align*}
	with $t = \frac{N}{N_\psi}$ and $N_\chi N_\psi t = N_\chi N \mid MN$, the claim follows immediately with Theorem \ref{T:Basis}. 
	\end{proof}

\begin{bem}Note that, by Theorem \ref{T:Eisenstein}, all modular forms in space $\mathcal{E}_k(\Gamma_1(MN))_0^{(M, N)}$ have the property of vanishing in the cusps $0$ and $i\infty$. 
	\end{bem}

We set $B_d$ to be the linear operator 
\begin{align*}
(f|B_d)(z) := f|_k\left( \begin{smallmatrix} d & 0 \\ 0 & 1\end{smallmatrix}\right)(z) = f(dz),
\end{align*}
for arbitrary values of $k$. Shifting the argument of a modular form in the above way does essentially not effect its completed $L$-series, since we have the formula
\begin{align} \label{eq:Lambda-Trans}
\Lambda(f|B_d; s) = d^{-s} \Lambda(f; s).
\end{align}

\subsection{Weak functions and modular forms}

In this section we recall the theory around weak functions, that were introduced as rational functions in \cite{Franke1} in the context of modular forms. 
\begin{definition} Let $N$ be a positive integer. We call a meromorphic function on the entire plane $\omega$ a weak function of level $N$, if it is 1-periodic, holomorphic in $\C \setminus \tfrac{1}{N}\Z$ with possible poles of order at most 1 in $z = \tfrac{j}{N} \in \Q$ and satisfies the grwoth condition 
\begin{align*}
\omega(x + iy) = O\left(|y|^{-A}\right), \qquad y \rightarrow \infty,
\end{align*}
for all values $A > 0$. We collect all weak functions with level $N$ in the vector space $W_N$. 
\end{definition}
\begin{bem} As in the theory of modular forms, the notion of level is not uniquely determined at first, since this can be raised by lifts to smaller congruence subsets. This is differentiated with the concept of new and old forms. Something similar is possible on the weak functions side. So we can speak of a new function with level $N$ if there is a real pole at a point $\frac{j}{N}$ with $\gcd(j,N) = 1$. 
	\end{bem}
By Liouville's theorem it is immediate  that 
\begin{align*}
\omega(z) = \sum_{j=1}^N \beta^\omega(j) \frac{e(z)}{e(\frac{j}{N}) - e(z)}, \qquad e(z) := e^{2\pi i z},
\end{align*}
for some coefficients $\beta^\omega(j)$ satsifying 
\begin{align} \label{eq:betasum}
\beta^\omega(1) + \beta^\omega(2) + \cdots + \beta^\omega(N) = 0.
\end{align}
 In particular, the spaces $W_N$ are finite dimensional. In addition, weak functions are very closely linked to Eisenstein series. To see this, for an integer $k$ and a pair $\omega \otimes \eta$ define the following holomorphic function on the upper half plane:
\begin{align} \label{vartheta}
\vartheta_k(\omega \otimes \eta; \tau) := -2\pi i \sum_{x \in \Q^\times} \res_{z=x}\left( z^{k-1} \eta(z) \omega(\tau z)\right).
\end{align}

Due to symmetry arguments, the map $\vartheta_k$ on $W_M \otimes W_N$ is highly non-injective. Depending on $k$, it makes sense to restrict to suitable subspaces. Let $W_N = W_N^+ \oplus W_N^-$ be the decomposition into even and odd functions, respectively. Then we put
\begin{align*}
(W_M \otimes W_N)_k := \begin{cases} W_M^+ \otimes W_N^+ \oplus W_M^- \otimes W_N^-, & \qquad \text{if } k \text{ is even}, \\ W_M^+ \otimes W_N^- \oplus W_M^- \otimes W_N^+, & \qquad \text{if } k \text{ is odd}.\end{cases}
\end{align*}

We also say that even and odd functions $\omega$ have positive or negative sign $\mathrm{sgn}(\omega)$, respectively, and put (if possible) $\sgn(\omega \otimes \eta) := \sgn(\omega)\sgn(\eta)$. As $\vartheta_k(\omega \otimes \eta; \tau) $ induces a periodic function, so we can find a Fourier series. 

\begin{prop} For $\omega \otimes \eta \in (W_M \otimes W_N)_k$ and $k \geq 3$, we have the Fourier expansion 
	\begin{align*}
\vt_k(\omega \otimes \eta; \tau) = 2N^{1-k}\sum_{m=1}^\infty \sum_{d|m} \left( d^{k-1} \beta^\eta(d) \left( \cF_M  \beta^\omega \right) \left( \frac{m}{d} \right)\right) q^{\frac{m}{N}}.
	\end{align*}
	\end{prop}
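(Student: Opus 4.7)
The plan is to compute the residue sum in \eqref{vartheta} explicitly, and then to expand $\omega$ as a Fourier series on the upper half-plane. In broad strokes, the residues extract the coefficients $\beta^\eta$, while the geometric series expansion of $\omega$ produces the coefficients $\mathcal{F}_M \beta^\omega$.

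First, I would observe that for $\tau$ in the upper half-plane and $z \in \mathbb{Q}^\times$ the point $\tau z$ is non-real, so $\omega(\tau z)$ is holomorphic at $z$. Hence the only poles of $z^{k-1}\eta(z)\omega(\tau z)$ on $\mathbb{Q}^\times$ are the simple poles of $\eta$ at $z = a/N$ for $a \in \mathbb{Z} \setminus \{0\}$. A direct computation from the representation $\eta(z) = \sum_j \beta^\eta(j)\,e(z)/(e(j/N)-e(z))$ gives $\res_{z=a/N}\eta(z) = -\beta^\eta(a)/(2\pi i)$, once $\beta^\eta$ is extended to an $N$-periodic function on $\mathbb{Z}$. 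Substituting yields
\begin{align*}
\vartheta_k(\omega \otimes \eta; \tau) = N^{1-k} \sum_{a \in \mathbb{Z}\setminus\{0\}} \beta^\eta(a)\, a^{k-1}\, \omega(\tau a/N).
\end{align*}

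Next, I would expand $\omega$ via the geometric series: for $\mathrm{Im}(w) > 0$ one has $e(w)/(e(j/M)-e(w)) = \sum_{k'\geq 1} e(-k'j/M)\,e(k'w)$, whence
\begin{align*}
\omega(w) = \sum_{k'=1}^\infty (\mathcal{F}_M \beta^\omega)(k')\, e(k'w), \qquad \mathrm{Im}(w) > 0.
\end{align*}
This applies directly to the contribution with $a > 0$. For $a < 0$ I would set $a = -b$ and invoke the parity hypothesis $\omega \otimes \eta \in (W_M \otimes W_N)_k$, which amounts to $\sgn(\omega)\sgn(\eta) = (-1)^k$. A short symmetry argument on the residue expansion shows $\beta^\eta(-b) = -\sgn(\eta)\,\beta^\eta(b)$, while $\omega(-w) = \sgn(\omega)\,\omega(w)$ by the very definition of the sign. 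Combined with $(-b)^{k-1} = (-1)^{k-1}b^{k-1}$, the three signs collapse to $(-1)^{k-1}\cdot(-1)\cdot(-1)^k = 1$, so the $a<0$ piece reproduces the $a>0$ piece and
\begin{align*}
\vartheta_k(\omega \otimes \eta; \tau) = 2 N^{1-k} \sum_{b=1}^\infty \beta^\eta(b)\, b^{k-1}\, \omega(\tau b/N).
\end{align*}

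Finally, I would insert the Fourier expansion of $\omega$ at $w = \tau b/N$ and swap the two sums, which is legitimate by the exponential decay $|e(k'\tau b/N)| = e^{-2\pi k'b\,\mathrm{Im}(\tau)/N}$, and reindex by $m = k' b$ with $d = b$ running over divisors of $m$. This directly produces the stated divisor sum with Fourier coefficient $2N^{1-k}\sum_{d\mid m} d^{k-1}\beta^\eta(d)(\mathcal{F}_M \beta^\omega)(m/d)$. The one genuinely non-mechanical step is the sign bookkeeping that fuses the $a<0$ and $a>0$ halves; everything else reduces to a residue computation and a geometric series expansion.
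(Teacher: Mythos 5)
The paper states this proposition without proof (it is recalled from the author's earlier work), so there is no in-text argument to compare against; judged on its own, your derivation is correct and is the natural one. The residue computation $\res_{z=a/N}\eta(z)=-\beta^\eta(a)/(2\pi i)$, the observation that $\omega(\tau z)$ contributes no rational poles, the Lambert-series expansion $\omega(w)=\sum_{n\ge 1}(\cF_M\beta^\omega)(n)e(nw)$ for $\Ima(w)>0$, and the parity bookkeeping $\beta^\eta(-b)=-\sgn(\eta)\beta^\eta(b)$, $\omega(-w)=\sgn(\omega)\omega(w)$, $\sgn(\omega)\sgn(\eta)=(-1)^k$ all check out, and the reindexing $m=k'b$ gives exactly the stated divisor sum; the result is also consistent with Theorem \ref{T:Eisenstein}(i) via \eqref{eisentheta}. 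Two cosmetic remarks: the sign argument as written presupposes that $\omega$ and $\eta$ have definite parity, so you should say explicitly that it suffices to treat such elementary tensors and extend by linearity; and reusing $k'$ next to the weight $k$ invites confusion.
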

This function satisfies an important transformation law. 
\begin{thm} \label{Transform} Let $\omega$ and $\eta$ be weak functions of level $N$. We then have $\vt_{k}(\omega \otimes \eta; \tau + N) = \vt_{k}(\omega \otimes \eta; \tau)$ and
	\begin{align} \label{transform} \vt_k\left( \omega \otimes \eta; - \frac{1}{\tau}\right) = - \tau^{k} \vt_k(\eta \otimes \widehat{\omega}; \tau) + 2\pi i \, \res_{z=0}\left( z^{k-1}\eta(z) \widehat{\omega}\left( \frac{z}{\tau}\right)\right),
	\end{align}
	where $\widehat{\omega}(z) := \omega(-z)$ is again weak of level $N$. 
\end{thm}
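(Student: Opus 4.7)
The plan is to read both assertions directly off the residue-sum definition of $\vt_k$. For the $N$-periodicity, I would observe that for $\tau \in \h$ the poles of $z \mapsto \omega(\tau z)$ lie at $z \in \tau\Q/\tau\Z$, which is disjoint from $\Q$. Hence only the rational poles of $\eta$, namely $z = j/N$ with $j \in \Z$, contribute to the defining sum. At any such point, $\omega((\tau+N)z) = \omega(\tau z + j) = \omega(\tau z)$ by 1-periodicity of $\omega$, so each residue is invariant under $\tau \mapsto \tau + N$ and the first assertion follows termwise.

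For the inversion formula, the key idea is to treat the meromorphic function $F(z) := z^{k-1}\eta(z)\widehat\omega(z/\tau)$ on all of $\C$ and compute its residues globally. The non-zero poles split into two families. First, the rational poles $z = j/N$ inherited from $\eta$ contribute exactly $-(2\pi i)^{-1}\vt_k(\omega \otimes \eta; -1/\tau)$ to the total residue sum, by the defining formula. Second, at the poles $z = \tau x$ coming from $\widehat\omega(z/\tau)$, where $x \in \Q^\times$ is a pole of $\widehat\omega$, a short chain-rule computation gives $\res_{z = \tau x}\widehat\omega(z/\tau) = \tau\,\res_{w=x}\widehat\omega(w)$, whence
\begin{align*}
\res_{z = \tau x} F = \tau^{k}\, \res_{w=x}\!\left(w^{k-1}\widehat\omega(w)\,\eta(\tau w)\right),
\end{align*}
so that the sum over this family equals $-(2\pi i)^{-1}\tau^{k}\,\vt_k(\eta \otimes \widehat\omega; \tau)$. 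Multiplying by $-2\pi i$ and rearranging, the claimed identity \eqref{transform} is seen to be equivalent to the ``global residue vanishing''
\begin{align*}
\res_{z=0} F + \sum_{x \in \Q^\times}\res_{z=x} F + \sum_{x \in \Q^\times}\res_{z = \tau x} F = 0.
\end{align*}

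To establish this vanishing I would integrate $F$ around a sequence of expanding parallelograms $P_{M,R}$ with vertices $\pm M,\ \pm M + R\tau$, shifted slightly to miss every pole, and let $M, R \to \infty$. On the two edges parallel to the real axis (at $\Ima z \approx R\Ima \tau$), the decay $\eta(z) = O((R\Ima\tau)^{-A})$ dominates the polynomial factor $|z|^{k-1}$; on the two slanted edges (where $|\Ima(-z/\tau)|$ is of order $M\Ima\tau/|\tau|^{2}$), the role is played by $\widehat\omega(z/\tau) = O(M^{-A})$. For $A$ sufficiently large the boundary integrals tend to zero, delivering the global residue identity, and hence \eqref{transform}. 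The main technical obstacle is arranging the contour cleanly: the rational poles $z = j/N$ are dense along the horizontal edges and the skew poles $z = \tau j/N$ lie on the other pair, so the shifts and indentations must be performed compatibly with the decay estimates, and one needs the two infinite residue series and the boundary integral to converge in step. Once $M$ and $R$ are sent to infinity along compatible rates (for instance $R = M$), this is routine and the formula drops out.
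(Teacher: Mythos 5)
The paper does not actually prove Theorem \ref{Transform}; it is imported from \cite{Franke1}, where the argument is likewise residue-theoretic, so your overall strategy is the intended one. Your reduction of \eqref{transform} to a global residue identity for $F(z):=z^{k-1}\eta(z)\widehat{\omega}(z/\tau)$ is correct and cleanly executed: the poles of $\widehat{\omega}(z/\tau)$ lie at $\tau j/N\notin\Q^{\times}$, so the residues of $F$ at rational points reproduce $-(2\pi i)^{-1}\vt_k(\omega\otimes\eta;-1/\tau)$; the rescaling $\res_{z=\tau x}F=\tau^{k}\res_{w=x}\left(w^{k-1}\widehat{\omega}(w)\eta(\tau w)\right)$ is right; and the periodicity argument is fine (note that substituting $Nz=j$ inside $\omega$ is legitimate only because the poles of $\eta$ are simple, so only the \emph{value} of the holomorphic factor at the pole enters the residue).

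The genuine gap is the contour. The parallelogram with vertices $\pm M$, $\pm M+R\tau$ lies entirely in the closed upper half-plane, so it can never enclose the skew poles $\tau j/N$ with $j<0$, which supply half of the series $\sum_{x\in\Q^{\times}}\res_{z=\tau x}F$; the residue theorem applied to these contours would therefore prove a different (and false) identity. Moreover its lower edge runs along the real axis, where $\eta$ has its dense family of poles and where $\widehat{\omega}(z/\tau)$ decays only like $|\mathrm{Re}(z)|^{-A}$, so after the proposed small shift this edge contributes a nonvanishing limit of the form $\int_{\R\mp i\epsilon}F$ rather than $0$: the ``technical obstacle'' you flag at the end is not routine for this contour but fatal to it. The repair is to center the parallelogram at the origin: take vertices $\pm M\pm R\tau$ with $M,R\notin\tfrac1N\Z$. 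Then the two horizontal edges sit at heights $\pm R\,\Ima\tau$ and the two slanted edges at real offsets $\pm M$, so no pole of $F$ lies on the boundary and no indentations are needed; your decay estimates for $\eta$ on the horizontal edges and for $\widehat{\omega}(\cdot/\tau)$ on the slanted edges then apply symmetrically to all four sides, and every pole of $F$ is eventually enclosed. Combined with the absolute convergence of the two residue series (from $|\widehat{\omega}(j/(N\tau))|=O(|j|^{-A})$ and $|\eta(\tau j/N)|=O(|j|^{-A})$, using that weak functions vanish at both $+i\infty$ and $-i\infty$), this yields the global residue identity and hence \eqref{transform}.
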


The concepts of Fourier transforms are useful when expressing terms of the form $\vt_k(\omega \otimes \eta; \tau)$ as modular forms, as the followig proposition shows. 
\begin{prop}[see \cite{Franke2}] Let $\chi$ and $\psi$ be non-principal Dirichlet characters modulo $M$ and $N$, respectively. Then the following identity holds: 
\begin{align} \label{eisentheta}
E_k(\chi, \psi; \tau) = \frac{\psi(-1) (-2\pi i)^{k}}{N (k-1)!} \vt_k\left(\omega_{\cF_M^{-1}(\chi)} \otimes \omega_{\cF_N(\psi)}; \tau\right).
\end{align}
\end{prop}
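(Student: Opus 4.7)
The approach is to compare Fourier expansions on both sides term-by-term, using the expansion of $\vt_k(\omega\otimes\eta;\tau)$ stated in the preceding proposition together with the Fourier expansion of $E_k(\chi,\psi;\tau)$ from Theorem \ref{T:Eisenstein}(i). Because $\chi$ is non-principal we have $\chi(0)=0$, so the constant term $2L(\psi;k)\chi(0)$ on the left vanishes, matching the fact that $\vt_k$ produces a Fourier series without constant term. Thus only the non-constant coefficients require attention.

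First I would read off the $\beta$-coefficients of the weak functions on the right-hand side: setting $\omega := \omega_{\cF_M^{-1}(\chi)}$ gives $\beta^\omega = \cF_M^{-1}(\chi)$ and hence $\cF_M\beta^\omega = \chi$, while $\eta := \omega_{\cF_N(\psi)}$ gives $\beta^\eta = \cF_N(\psi)$. Plugging these into the Fourier expansion formula produces
\[
\vt_k(\omega\otimes\eta;\tau) = 2N^{1-k}\sum_{m=1}^\infty \sum_{d\mid m} d^{k-1}\,(\cF_N\psi)(d)\,\chi\!\left(\tfrac{m}{d}\right) q^{m/N}.
\]

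To match this against Theorem \ref{T:Eisenstein}(i), whose inner sum involves $(\cF_N\psi)(-d)$ rather than $(\cF_N\psi)(d)$, I would invoke the elementary symmetry of the discrete Fourier transform obtained from the substitution $n\mapsto -n$, namely $(\cF_N\psi)(d) = \psi(-1)(\cF_N\psi)(-d)$. Substituting this into the display above introduces an extra factor $\psi(-1)$; multiplying by the prefactor $\psi(-1)(-2\pi i)^k/(N(k-1)!)$ from the statement of the proposition and using $\psi(-1)^2=1$ produces exactly the constant $2(-2\pi i)^k/(N^k(k-1)!)$ appearing in front of the Fourier series of $E_k(\chi,\psi;\tau)$. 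The two series then coincide term by term.

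None of the steps is genuinely deep; the only point that requires care is the sign-symmetry $(\cF_N\psi)(d)=\psi(-1)(\cF_N\psi)(-d)$, which is precisely what accounts for the factor $\psi(-1)$ in the normalisation of \eqref{eisentheta}, and the bookkeeping of the power of $N$, where the factor $N^{1-k}$ in the $\vt_k$-expansion combines with the $N^{-1}$ from the prefactor to yield the $N^{-k}$ expected from Theorem \ref{T:Eisenstein}(i). Once these two arithmetic checks are made, the identity follows from uniqueness of Fourier coefficients on the upper half-plane.
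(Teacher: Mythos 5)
Your proposal is correct, and in fact the paper itself gives no proof of this proposition at all --- it is simply imported from \cite{Franke2} --- so your term-by-term comparison of Fourier expansions is a welcome self-contained verification from the two expansions already quoted in the paper. The identification $\beta^\omega = \cF_M^{-1}(\chi)$, $\cF_M\beta^\omega = \chi$, $\beta^\eta = \cF_N(\psi)$, the symmetry $(\cF_N\psi)(d) = \psi(-1)(\cF_N\psi)(-d)$, and the bookkeeping $N^{1-k}\cdot N^{-1} = N^{-k}$ all check out, and the vanishing of the constant term via $\chi(0)=0$ is handled correctly. The only point you gloss over is a parity hypothesis: the Fourier expansion of $\vartheta_k(\omega\otimes\eta;\tau)$ is stated only for $\omega\otimes\eta \in (W_M\otimes W_N)_k$, and Theorem \ref{T:Eisenstein}(i) is stated only under $\chi(-1)\psi(-1)=(-1)^k$. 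Since $\sgn(\omega_{\cF_M^{-1}(\chi)}\otimes\omega_{\cF_N(\psi)}) = \chi(-1)\psi(-1)$, your argument as written covers exactly the case $\chi(-1)\psi(-1)=(-1)^k$; in the complementary case both sides of \eqref{eisentheta} vanish identically (the Eisenstein series by the $(m,n)\mapsto(-m,-n)$ cancellation, the $\vartheta_k$-term by the corresponding symmetry of the residues), so the identity still holds, but you should say so explicitly to make the proof complete as stated.
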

In particular, if $\chi$ and $\psi$ are primitive and hence conjugate up to a constant under the Forurier transform, formula  \eqref{eisentheta} simplifies to the important identity 
\begin{align}\label{eq:weakeisenstein}
E_k(\chi, \psi; \tau) = \frac{\chi(-1) (-2\pi i)^{k} \cG(\psi)}{N (k-1)! \cG(\overline{\chi})}\vt_k(\omega_{\overline{\chi}} \otimes \omega_{\overline{\psi}}; \tau).
\end{align}

In summary, the following statements can be made about purely complex analytic means:

\begin{thm} \label{A} Let $k \geq 3$ and $M, N > 1$ be integers and define the congruence subgroup 
	\dm{\Gamma_1(M, N) := \left\{\mat{a & b \\ c & d} \in \Gamma_0(M, N) \Big{|} a \equiv d \equiv 1 \pmod{M N} \right\}.} 
	Let $M_k(\Gamma_1(M, N))$ the space of weight $k$ holomorphic modular forms for $\Gamma_1(M, N)$. There is a homomorphism 
	\dm{W_{M} \otimes W_{N} \LRA M_k(\Gamma_1(M, N))}
	\dm{\omega \otimes \eta \LM \vt_{k}(\omega \otimes \eta; \tau) := -2\pi i\sum_{x \in \Q^\times} \res_{z=x}\left(z^{k-1} \eta(z) \omega(z\tau)\right).}
	In the case that $k = 1$ and $k=2$ the map stays well-defined under the restriction that the function $z \mapsto z^{k-1} \eta(z)\omega(z\tau)$ has a removable singularity in $z=0$. 
\end{thm}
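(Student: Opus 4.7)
I would break the proof into three pieces: (i) showing $\vt_k(\omega\otimes\eta;\tau)$ is holomorphic on $\h$ with the correct decay at $i\infty$; (ii) verifying linearity in $\omega \otimes \eta$; and (iii) establishing modularity under $\Gamma_1(M,N)$. Linearity (ii) is immediate from the bilinearity of $z^{k-1}\eta(z)\omega(z\tau)$ in $(\omega,\eta)$ and linearity of residues. For (i), the Fourier expansion from the proposition preceding Theorem \ref{Transform} converges absolutely on $\h$ (its inner divisor-type coefficients being $O(m^{k-1+\e})$), gives holomorphy and vanishing at $i\infty$, and manifestly displays $T^N$-periodicity since the expansion is in $q^{m/N}$. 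For $k\in\{1,2\}$ the removable-singularity hypothesis on $z^{k-1}\eta(z)\omega(z\tau)$ at $z=0$ is exactly what suppresses the otherwise non-vanishing contribution from the residue at $z=0$ that enters both the residue formula and the derivation of the Fourier expansion.

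For (iii) the natural reduction is to the known modularity of Eisenstein series. Whenever $\chi$ and $\psi$ are Dirichlet characters modulo $M$ and $N$, identity \eqref{eisentheta} identifies $\vt_k(\omega_{\cF_M^{-1}(\chi)} \otimes \omega_{\cF_N(\psi)}; \tau)$ with a nonzero scalar multiple of $E_k(\chi,\psi;\tau)$, which by Theorem \ref{T:Eisenstein}(ii) lies in $M_k(\Gamma_0(M,N)) \subset M_k(\Gamma_1(M,N))$. The tensor space $W_M \otimes W_N$ is spanned by Dirichlet-type tensors of the form $\omega_{\cF_M^{-1}(\chi)} \otimes \omega_{\cF_N(\psi)}$ for $\chi,\psi$ ranging over arbitrary functions on $\Z/M$ and $\Z/N$ vanishing at the origin (the vanishing-sum constraint $\sum\beta^\omega(j)=0$ translates precisely to $\chi(0)=0$ under Fourier inversion). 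The extension from genuine Dirichlet characters to arbitrary such periodic sequences is handled by decomposing $\chi$ and $\psi$ into primitive Dirichlet characters of conductors dividing $M$ and $N$, appealing to Theorem \ref{T:Basis} and to the $B_d$-operators for the oldform contributions. Linearity then places $\vt_k$ in $M_k(\Gamma_1(M,N))$.

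The main obstacle is precisely this passage from Dirichlet-character data to arbitrary weak-function pairs, since \eqref{eisentheta} is formulated only for genuine characters. A cleaner alternative is to derive modularity directly by combining the $T^N$-periodicity from step (i) with the $S$-transformation of Theorem \ref{Transform}. For $k \geq 3$ the residue correction in Theorem \ref{Transform} vanishes identically (the product $z^{k-1}\eta(z)\widehat{\omega}(z/\tau)$ has a pole of order at most $3-k\leq 0$ at $z=0$), yielding the clean swap $\vt_k(\omega\otimes\eta;-1/\tau) = -\tau^k\vt_k(\eta\otimes\widehat{\omega};\tau)$. In $|_k S$-notation this sends $\vt_k(\omega\otimes\eta;\cdot)$ to $-\vt_k(\eta\otimes\widehat{\omega};\cdot)$, and iterating four times cycles through $\omega\otimes\eta \to \eta\otimes\widehat{\omega} \to \widehat{\omega}\otimes\widehat{\eta} \to \widehat{\eta}\otimes\omega \to \omega\otimes\eta$ with total sign $1$, compatibly with $S^4=I$. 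Expressing an arbitrary $\gamma \in \Gamma_1(M,N)$ as a word in $S$ and $T^N$ and carefully tracking the induced swap-action, the remaining task is a combinatorial check that the divisibility conditions $M\mid c$, $N\mid b$, $a \equiv d \equiv 1 \pmod{MN}$ defining $\Gamma_1(M,N)$ force the accumulated swaps between $\omega/\widehat{\omega}$ and $\eta/\widehat{\eta}$ to cancel out, leaving the required automorphy factor $(c\tau + d)^k$; this combinatorial verification is the substantive content of the argument.
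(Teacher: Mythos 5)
The paper does not actually prove Theorem \ref{A}; it is recalled from \cite{Franke1} as background, so there is no internal proof to compare against. Judged on its own merits, your steps (i) and (ii) are fine, but both of your proposed routes to modularity (iii) have concrete gaps at exactly the point you flag as "the substantive content," and in each case the deferred step is not merely tedious but problematic as formulated. For Route A: Dirichlet characters modulo $M$ (principal ones included) are supported on residues coprime to $M$, so the tensors $\omega_{\cF_M^{-1}(\chi)}$ span only a $\phi(M)$-dimensional subspace of the $(M-1)$-dimensional space $W_M$ when $M$ is composite; an arbitrary coefficient vector $\beta^\omega$ with $\sum_j\beta^\omega(j)=0$ (e.g.\ one supported on non-units) is not a combination of characters modulo $M$, and \eqref{eisentheta} is only stated for non-principal characters. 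The passage through imprimitive data, conductors $d\mid M$, and $B_t$-operators is exactly where the work lies, and it is not supplied.

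Route B contains a step that fails outright: you propose to express an arbitrary $\gamma \in \Gamma_1(M,N)$ as a word in $S$ and $T^N$, but for $N \geq 3$ the group $\langle S, T^N\rangle$ has infinite index in $\SL_2(\Z)$ (it is essentially a free product $\Z/4 \ast \Z$), hence cannot contain the finite-index subgroup $\Gamma_1(M,N)$; no such word exists in general. To run a generators-and-relations argument you must use the full translation $T$, and then the difficulty is that $\vt_k(\omega\otimes\eta;\tau+j)$ for $N \nmid j$ involves $\omega(x\tau + xj)$ at the poles $x \in \tfrac{1}{N}\Z$ of $\eta$, i.e.\ translates of $\omega$ by elements of $\tfrac{1}{MN}\Z$; these leave the family $W_M \otimes W_N$ and force you to track a larger (level-$MN$) orbit of functions, which is precisely the bookkeeping your sketch omits. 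Your observation that the residue correction in \eqref{transform} vanishes for $k \geq 3$ is correct and is a genuinely useful ingredient, but as it stands neither route closes the modularity argument.
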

In some situations, the occurrence of singularities in $z=0$ is pathological. For instance, Theorem \ref{A} shows that in these cases modular forms are no longer generated for small weights. In this paper we also want to avoid such singularities. Therefore, it is convenient that we restrict ourselves to the subspaces $W_N^0 \subset W_N$ consisting of all weak functions which have removable singularities in $z=0$. Of course we can (and need) to play the same game with $W_N^0$ regarding even and odd parts, and put
\begin{align*}
(W_M^0 \otimes W_N^0)_k := (W_M \otimes W_N)_k \cap (W_M^0 \otimes W_N^0).
\end{align*}

In addition, it makes sense to include Dirichlet character theory when choosing a basis of $W_N^0$. This has the great advantage that we get correspondences to Eisenstein series on characters on the other side. The situation of a prime level is particularly easy. 

\begin{prop} \label{prop:CharacterBasis} Let $p$ be an odd prime. For any Dirichlet character $\chi$ modulo $p$, put
	\begin{align*}
	\omega_\chi(z) := \sum_{j=1}^{p-1} \chi(j) \frac{e(z)}{e(\frac{j}{p}) - e(z)}.
	\end{align*} 
	Then, $(\omega_\chi)_{\chi \in \mathcal{C}^{\mathrm{prim}}_0(p)}$ is a basis of $W_p^0$.  Furthermore, the sets $(\omega_{\chi \in \mathcal{C}^{\mathrm{prim}}_0(p), \chi(-1) = \mp 1})$ define a basis for $W_p^{0,\pm}$, respectively. 
	\end{prop}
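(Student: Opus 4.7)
The plan is to verify directly from the definition that each $\omega_\chi$ with non-principal $\chi$ lies in $W_p^0$, match dimensions, use character orthogonality for linear independence, and then determine parity by an explicit symmetry computation.

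First I would check that $\omega_\chi \in W_p^0$ when $\chi$ is non-principal. By construction $\omega_\chi$ is $1$-periodic and meromorphic with simple poles precisely at the points $z = j/p$ for $j = 1, \ldots, p-1$ (so there is no pole at $z=0$, which is the defining feature of $W_p^0$). The growth condition is immediate: as $\Ima(z) \to +\infty$, $e(z) \to 0$ and hence $\omega_\chi(z) \to 0$; as $\Ima(z) \to -\infty$, $e(z) \to \infty$ and each summand tends to $-\chi(j)$, giving the limit $-\sum_{j=1}^{p-1}\chi(j) = 0$ since $\chi$ is non-principal. In the notation of \eqref{eq:betasum} we have $\beta^{\omega_\chi}(j) = \chi(j)$ for $1 \le j \le p-1$ and $\beta^{\omega_\chi}(p) = 0$, and the necessary relation $\sum_{j=1}^{p}\beta^{\omega_\chi}(j)=0$ again reduces to $\sum_{j=1}^{p-1}\chi(j)=0$.

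Next I would do the dimension count. A general $\omega \in W_p$ is parametrized by $(\beta^\omega(1),\ldots,\beta^\omega(p))\in\C^p$ subject to the single relation \eqref{eq:betasum}; the extra condition $\beta^\omega(p)=0$ imposed by the removable singularity at $z=0$ cuts this down to $\dim W_p^0 = p-2$. On the other side, there are exactly $p-2$ non-principal characters modulo $p$ (all automatically primitive), so it suffices to prove linear independence. Suppose $\sum_{\chi}c_\chi\omega_\chi=0$ in $W_p^0$. Since the poles at $z=j/p$ are isolated, reading off the residue at each $j/p$ (equivalently, extracting the $\beta$-coefficients) yields
\begin{align*}
\sum_{\chi\in\mathcal{C}^{\mathrm{prim}}_0(p)} c_\chi \chi(j) = 0 \qquad (j=1,\ldots,p-1),
\end{align*}
and orthogonality of the Dirichlet characters modulo $p$ forces $c_\chi=0$ for every $\chi$.

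For the parity refinement I would substitute $j\mapsto p-j$ in the defining sum of $\omega_\chi(-z)$, use $\chi(-j) = \chi(-1)\chi(j)$, and simplify the individual summand via
\begin{align*}
\frac{e(-z)}{e(-j/p)-e(-z)} = -\,\frac{e(j/p)}{e(j/p)-e(z)} = -\left(1 + \frac{e(z)}{e(j/p)-e(z)}\right).
\end{align*}
Because $\sum_j\chi(j)=0$, the constant contribution drops out and one finds the clean identity $\omega_\chi(-z) = -\chi(-1)\,\omega_\chi(z)$. Hence $\omega_\chi$ is even precisely when $\chi(-1)=-1$ and odd precisely when $\chi(-1)=+1$, which gives the two stated bases of $W_p^{0,\pm}$ after noting that the decomposition $W_p^0 = W_p^{0,+}\oplus W_p^{0,-}$ respects the basis produced above.

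I expect no real obstacle here; the only technical point is the bookkeeping in the reflection identity (in particular the sign flip caused by the passage from $e(-j/p)-e(-z)$ to $e(j/p)-e(z)$), which is where the asymmetric assignment ``$\omega_\chi$ is even iff $\chi$ is odd'' enters.
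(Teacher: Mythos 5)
Your proof is correct and follows essentially the same route as the paper's: identify $W_p^0$ with the $\beta$-coefficient vectors summing to zero with $\beta(p)=0$, count $p-2$ non-principal (hence primitive) characters, get independence from orthogonality, and observe the parity flip $\omega_\chi(-z)=-\chi(-1)\,\omega_\chi(z)$, which you verify by the explicit reflection identity the paper only alludes to. The only cosmetic remark is that the growth condition is in fact exponential decay in both directions (clear from the geometric expansion of each summand), which is slightly stronger than the limit statement you give.
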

\begin{proof} Every weak function $\omega \in W_p^0$ has a removable singularity in $z=0$, so it is of the form
	\begin{align*}
	\omega(z) = \sum_{j=1}^{p-1} \beta^\omega(j) \frac{e(z)}{e(\frac{j}{p}) - e(z)}.
	\end{align*}
	The assertion now follows with the facts that every nonprincipal Dirichlet character modulo the prime $p$ is already primitive, the orthogonality relations and \eqref{eq:betasum}, and that these characters are all linearly independent. For the second claim note that $\omega_\chi$ is an even or odd function, if and only if the generating coefficients $\beta^\omega$ define odd or even functions, respectively. 
	\end{proof}
Of course, similar statements apply to non-prime levels as well, but in this paper we want to focus on prime levels, so we will not go into this further. 

As in the case of primitive characters $\chi$ and $\psi$, the functions $\vt_k(\omega_\chi \otimes \omega_\psi; \tau)$ are essentially the corresponding Eisenstein series,  we can easily conclude the following proposition.

\begin{prop} \label{prop:prime-injective} Let $p_1$ and $p_2$ be prime numbers and $k \geq 3$ an integer. Then the linear map 
	\begin{align*} \vartheta_k \colon \left( W_{p_1}^0 \otimes W_{p_2}^0\right)_k & \longrightarrow \mathcal{E}_k(\Gamma_1(p_1 p_2))_0^{(p_1, p_2)}  \\
	\omega \otimes \eta & \longmapsto \vt_k(\omega \otimes \eta; p_2 \tau)
	\end{align*}
	is an isomorphism.
	\end{prop}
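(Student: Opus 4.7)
The plan is to exhibit explicit bases on both sides and show that $\vartheta_k(\,\cdot\,;p_2\tau)$ sends one to a nonzero scalar multiple of the other. Once this is done, the isomorphism assertion is immediate.

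First, I would use Proposition \ref{prop:CharacterBasis} to record a basis of $(W_{p_1}^0 \otimes W_{p_2}^0)_k$. Since $W_{p_i}^{0,\pm}$ has the basis $\{\omega_\chi : \chi \in \mathcal{C}_0^{\mathrm{prim}}(p_i), \chi(-1)=\mp 1\}$, the definition
\begin{align*}
(W_{p_1}^0 \otimes W_{p_2}^0)_k := \bigoplus_{\sgn(\omega)\sgn(\eta)=(-1)^k} W_{p_1}^{0,\sgn(\omega)}\otimes W_{p_2}^{0,\sgn(\eta)}
\end{align*}
gives the basis
\begin{align*}
\mathcal{B} := \left\{ \omega_\chi \otimes \omega_\psi \,\Big|\, \chi \in \mathcal{C}_0^{\mathrm{prim}}(p_1),\ \psi \in \mathcal{C}_0^{\mathrm{prim}}(p_2),\ \chi(-1)\psi(-1) = (-1)^k \right\}.
\end{align*}

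Next, I would identify a basis of the target space. Since $p_1$ and $p_2$ are prime, the divisibility conditions $N_\chi \mid p_1$ and $N_\psi \mid p_2$ combined with the requirement that $\chi,\psi$ be non-principal and primitive force $N_\chi = p_1$ and $N_\psi = p_2$. Hence, by Definition \ref{def:newforms}, the defining generators of $\mathcal{E}_k(\Gamma_1(p_1p_2))_0^{(p_1,p_2)}$ are precisely the $E_k(\chi,\psi;p_2\tau)$ with $(\chi,\psi)$ ranging over the same index set as $\mathcal{B}$, and they are linearly independent by Proposition \ref{prop:linearindep}.

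Then I would apply \eqref{eq:weakeisenstein} with $\tau$ replaced by $p_2\tau$ and $N = p_2$: for each admissible pair $(\chi,\psi)$,
\begin{align*}
\vartheta_k(\omega_{\overline{\chi}} \otimes \omega_{\overline{\psi}}; p_2\tau) = \frac{p_2(k-1)! \, \mathcal{G}(\overline{\chi})}{\chi(-1)(-2\pi i)^k \mathcal{G}(\psi)} \, E_k(\chi,\psi;p_2\tau),
\end{align*}
and the scalar on the right is nonzero because Gauss sums of primitive characters modulo a prime are nonzero. Since $\chi \mapsto \overline{\chi}$ is a bijection of $\mathcal{C}_0^{\mathrm{prim}}(p_i)$ preserving the parity condition, this says that $\vartheta_k(\,\cdot\,;p_2\tau)$ maps the basis $\mathcal{B}$ bijectively (up to nonzero scalars and relabeling by complex conjugation) onto the basis $\{E_k(\chi,\psi;p_2\tau)\}$ of $\mathcal{E}_k(\Gamma_1(p_1p_2))_0^{(p_1,p_2)}$, proving the claim.

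The only step that requires any care is the bookkeeping of parity and primitivity: verifying that $\overline{\chi}$ has the same parity as $\chi$, that the constraints $N_\chi \mid p_1$ and $\chi$ primitive non-principal really force $N_\chi = p_1$, and that $\omega_{\overline{\chi}} \otimes \omega_{\overline{\psi}}$ indeed lies in $(W_{p_1}^0 \otimes W_{p_2}^0)_k$ under the hypothesis $\chi(-1)\psi(-1)=(-1)^k$. Everything else is a direct application of already-established results.
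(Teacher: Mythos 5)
Your proposal is correct and follows essentially the same route as the paper: both identify the basis $\{\omega_\chi\otimes\omega_\psi\}$ of $(W_{p_1}^0\otimes W_{p_2}^0)_k$ via Proposition \ref{prop:CharacterBasis}, use \eqref{eq:weakeisenstein} to match it (up to nonzero scalars and conjugation of characters) with the generating Eisenstein series, and invoke Proposition \ref{prop:linearindep} together with the observation that primality forces $N_\chi=p_1$, $N_\psi=p_2$. Your write-up just makes the bookkeeping of parity and primitivity more explicit than the paper does.
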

\begin{proof} The elements $(\omega_{\chi_1} \otimes \omega_{\chi_2})_{\chi_1 \in \mathcal{C}_0^{\mathrm{prim}(p_1)}, \chi_2 \in \mathcal{C}_0^{\mathrm{prrim}}(p_2), \chi_1(-1)\chi_2(-1) = (-1)^k}$ define a basis of $(W^0_{p_1} \otimes W^0_{p_2})_k$ by Proposition \ref{prop:CharacterBasis}. Recall that with \eqref{eq:weakeisenstein}
	\begin{align*}
	E_k(\chi, \psi; p_2\tau) = \frac{\chi(-1) (-2\pi i)^{k} \cG(\psi)}{N (k-1)! \cG(\overline{\chi})}\vt_k(\omega_{\overline{\chi}} \otimes \omega_{\overline{\psi}}; p_2\tau),
	\end{align*}
	so there is a 1-1-correspondence between basis vectors of $\left( W_{p_1}^0 \otimes W_{p_2}^0\right)_k $ and $\mathcal{E}_k(\Gamma_1(p_1 p_2))_0^{(p_1, p_2)}$ by Proposition \ref{prop:linearindep} (note that the only divisors of $p_1$ and $p_2$ are $\{1, p_1\}$ and $\{1,p_2\}$, but there are no non-principal characters modulo $1$). The claim now follows. 
	\end{proof}
Another tool which we require is the \textit{Fourier transform of a weak function}. We transform the coefficient function $\beta^\omega(j)$ of some weak function $\omega$ and use the result to construct a new weak funtion. If $\beta(N) = 0$, we obtain 
\begin{align*}
\sum_{n=1}^N \mathcal{F}_N(\beta)(n) = \sum_{n=1}^N \sum_{j=1}^N \beta(j)e^{-\frac{2\pi i n j}{N}} = N\beta(N) = 0.
\end{align*}
On the other hand, we clearly have $\cF_N(\beta)(0) = 0$ and hence the function $\cF_N(\beta)$ defines again a weak function
\begin{align*}
\cF_N \omega(z) := \sum_{n=1}^{N-1} \cF_N(\beta)(n) \frac{e(z)}{e\left( \frac{n}{N}\right) - e(z)}.
\end{align*}
This gives rise to the statement that $\cF_N$ defines an automorphism on the space $W_N^{\mathrm{ord} \geq 0}$, when considering the inverse transform 
\begin{align*}
\cF_N^{-1}(\beta)(j) := \frac{1}{N} \sum_{n=1}^N \beta(n) e^{\frac{2\pi i j n}{N}}.
\end{align*}

\subsection{Eichler integrals and period polynomials}

To any modular form $f(\tau) = \sum_{n \geq 0} a_n q^{\frac{n}{\lambda}}$ of weight $k \geq 2$ for some congruence subgroup that vanishes in the cusps in $\tau = 0$ and $\tau = i\infty$, we can associate an Eichler integral. It has the form
\dm{\mathcal{I}(f; \tau) := \frac{(-2\pi i)^{k-1}}{(k-2)!} \int \limits_\tau^{i\infty} f(z)(z - \tau)^{k-2} \dz.}
This integral represents a holomorphic and periodic function on the upper half plane and is tied to the so-called {\its period polynomial} $P(f; \tau)$ of $f$ via the functional equation 
\dm{\mathcal{I}\left(f; \tau \right) - (-1)^k \tau^{k-2} \mathcal{I}\left(f^*; -\frac{1}{\tau}\right) =: P(f; \tau),}
where $f^* = f|_k\smal 0 & -1 \\ 1 & 0\smar$. Explicitely, we have a correspondence to the critical values of the $L$-series associated to $f$ via
\dm{P(f; \tau) = (-1)^k \sum_{n=0}^{k-2} {k-2 \choose n} i^{1-n} \Lambda(f; n+1) \tau^{k-2-n}.}

We mention that period polynomials have several applications. They arise natuarally in the context of the Eichler-Shimura isomorphism (see \cite{Cohen}, Chapter 11), derivatives of $L$-functions \cite{DiamRol}, Manin's Periods Theorem \cite{Man} and the theory of transcendental numbers \cite{Gun}.

Since Eisenstein series come from rational functions, we can express their Eichler integrals in terms of residues of weak functions. The easiest case of primitive characters is presented in the following theorem. 

\begin{thm}[see Theorem 4.15 of \cite{Franke3}] \label{MainPoly} Let $k \geq 3$ be an integer, $\chi$ and $\psi$ be two primitive Dirichlet characters with $\chi(-1)\psi(-1) = (-1)^k$ and $f(\tau) = E_k(\chi, \psi; \tau)$. We then have the following identity between rational functions: 
	\dm{\sum_{\ell = 0}^{k-2} {k - 2 \choose \ell} i^{1-\ell} \Lambda(f; \ell + 1) \tau^{\ell} = \frac{4\pi^2 \chi(-1)}{N_\psi^{k-1} N_\chi (k-1)}  \res_{z=0}\left( z^{1-k} \omega_\psi(z) \omega_{\chi} \left( \frac{N_\psi z \tau}{N_\chi} \right)\right) .} 
\end{thm}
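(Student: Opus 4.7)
The plan is to express both sides as polynomials in $\tau$ of degree $\leq k-2$ and match coefficients via the dictionary between critical Dirichlet $L$-values and Taylor coefficients of weak functions at $z=0$.

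Since $\chi, \psi$ are primitive nontrivial, the Eisenstein series $f = E_k(\chi,\psi;\tau)$ vanishes at both cusps $0$ and $i\infty$, so $\Lambda(f;s) = \int_0^\infty f(iy) y^{s-1}\, dy$ converges for all $s \in \C$. Interchanging the finite sum with the integral and applying the binomial theorem, the LHS rewrites as
\begin{align*}
\sum_{\ell=0}^{k-2}\binom{k-2}{\ell} i^{1-\ell}\Lambda(f;\ell+1)\tau^\ell = \int_0^{i\infty} f(w)(1-w\tau)^{k-2}\, dw.
\end{align*}
On the right-hand side, because $\omega_\chi, \omega_\psi$ have removable singularities at $z=0$, the residue extracts the coefficient of $z^{k-2}$ in the Taylor product $\omega_\psi(z)\omega_\chi(N_\psi z\tau/N_\chi)$, producing the polynomial $\sum_{\ell=0}^{k-2} a^\psi_{k-2-\ell}\, a^\chi_\ell\, (N_\psi \tau/N_\chi)^\ell$ in the Taylor coefficients $a^\chi_\ell := [z^\ell]\omega_\chi$ and $a^\psi_m := [z^m]\omega_\psi$ at the origin.

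Matching powers of $\tau^\ell$, the claim reduces to showing $\Lambda(f;\ell+1) = C_\ell\cdot a^\psi_{k-2-\ell}\, a^\chi_\ell$ for an explicit constant $C_\ell$. To compute $\Lambda(f;\ell+1)$, I would substitute the Fourier expansion of $f$ from Theorem \ref{T:Eisenstein} and integrate term-by-term via $\int_0^\infty e^{-\alpha y}y^j\, dy = j!/\alpha^{j+1}$; by Theorem \ref{thm:L-functions} this expresses $\Lambda(f;\ell+1)$ as an explicit multiple of $L(\chi;\ell+1)\, L(\overline\psi;\ell+2-k)$. To identify the Taylor coefficient $a^\chi_\ell$, I would expand each summand $e(z)/(e(j/N_\chi)-e(z))$ around $z=0$ via the Bernoulli generating function $t/(e^t-1) = \sum_n B_n t^n/n!$, sum against $\chi(j)$, and invoke the Hurwitz-type formula (together with the functional equation of $L(\chi;s)$) to consolidate the resulting partial sum $\sum_j \chi(j)/j^{\ell+1}$ and the generalized Bernoulli contributions into an explicit multiple of $L(\chi;\ell+1)$; an analogous computation identifies $a^\psi_{k-2-\ell}$ with a multiple of $L(\overline\psi;\ell+2-k)$.

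The main obstacle is the bookkeeping of constants in this matching. One must coordinate the factors $(-2\pi i)^k/(k-1)!$ and Gauss sums $\mathcal{G}(\psi)$ appearing in the Fourier expansion of $f$ with the $(2\pi i)^{\ell+1}, \mathcal{G}(\chi), \mathcal{G}(\overline\psi)$ factors arising in the Taylor/$L$-value dictionary, the signs $\chi(-1), \psi(-1)$, the moduli to the correct powers, and the rescaling factor $N_\psi/N_\chi$ inside the argument of $\omega_\chi$. The critical $1/(k-1)$ factor in the prefactor $4\pi^2\chi(-1)/(N_\psi^{k-1}N_\chi(k-1))$ emerges from the single Bernoulli-sum term with index $s+m+1 = k-1$ that survives the residue extraction at $z=0$ against the weighting $z^{1-k}$; the sign $\chi(-1)$ is produced when Gauss sums consolidate via $\mathcal{G}(\chi)\mathcal{G}(\overline\chi) = \chi(-1) N_\chi$. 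Once all these constants cancel correctly, the identity follows.
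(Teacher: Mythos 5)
Your overall strategy is sound, and it is genuinely different from the route the paper relies on: Theorem \ref{MainPoly} is imported from \cite{Franke3} (Theorem 4.15), where it is obtained as a special case of \emph{Eichler duality}, i.e.\ from the transformation law \eqref{transform} of $\vartheta_k$ and the Eichler-integral/period-polynomial formalism. You instead propose a direct verification: your rewriting of the left side as $\int_0^{i\infty} f(w)(1-w\tau)^{k-2}\,dw$ is correct (the sign bookkeeping $(-1)^\ell i^{\ell+1}=i^{1-\ell}$ checks out, and convergence at $0$ holds because $f$ vanishes at both cusps), the observation that $\res_{z=0}(z^{1-k}g(z))=[z^{k-2}]g(z)$ correctly turns the right side into $\sum_\ell a^\psi_{k-2-\ell}a^\chi_\ell (N_\psi\tau/N_\chi)^\ell$, and the two dictionaries you invoke (Theorem \ref{thm:L-functions} for $\Lambda(f;\ell+1)$, and the Hurwitz/functional-equation evaluation of the Taylor coefficients of $\omega_\chi$, $\omega_\psi$ at the origin) are exactly the right ingredients; the parities of the trivial zeros on both sides also match, so the coefficientwise reduction is legitimate. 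The trade-off is that the duality proof explains the identity structurally in one stroke, whereas your route gives an independent, elementary confirmation at the price of a heavy explicit computation.

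That said, as written the proposal is not yet a proof, because the entire quantitative content of the theorem --- the exact constant $\tfrac{4\pi^2\chi(-1)}{N_\psi^{k-1}N_\chi(k-1)}$ and the rescaling $N_\psi/N_\chi$ --- is precisely the ``bookkeeping'' you defer. You should carry out at least one coefficient in full. Moreover, the one place where you do try to explain a constant is off: the factor $\tfrac{1}{k-1}$ does not come from ``a single Bernoulli-sum term with index $s+m+1=k-1$''; it arises from the elementary identity
\begin{align*}
\binom{k-2}{\ell}\,\frac{\Gamma(\ell+1)}{(k-1)!}=\frac{1}{(k-1)\,(k-2-\ell)!},
\end{align*}
which combines the binomial coefficient on the left side, the $\Gamma(\ell+1)$ from $\Lambda(f;\ell+1)$, and the $(k-1)!$ in the denominator of the Eisenstein $L$-function formula; the residual $1/(k-2-\ell)!$ is then absorbed into the $(k-2-\ell)$-th Taylor coefficient of $\omega_\psi$. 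You should also be explicit that Theorem \ref{thm:L-functions} is stated for $E_k(\chi,\psi;N_\psi\tau)$ while your $f$ is $E_k(\chi,\psi;\tau)$, so a factor $N_\psi^{s}$ from \eqref{eq:Lambda-Trans} enters the constant, and that the edge cases $\ell=0$ (conditional convergence in the Hurwitz expansion) and $\ell=k-2$ (the value $L(\overline\psi;0)$) need separate care. None of these points invalidates the approach, but until the matching of constants is actually exhibited, the theorem has only been reduced to a (correctly identified) computation, not proved.
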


This is a special consequence of a duality principle called  {\it Eichler duality}, which was formulated using rational functions in \cite{Franke3}. It takes on a particularly simple form here, as the characters involved are primitive and therefore (except for complex conjugation) eigenfunctions under the discrete Fourier transformation. Note that we can reformulate this theorem regarding the choice of the modular form $f$. When setting $g(\tau) := (f|B_{N_\psi})(\tau)  = E_{k}(\chi, \psi; N_\psi \tau)$ in $\mathcal{E}_k(\Gamma_1(N_\psi N_\chi))$, one finds with \eqref{eq:Lambda-Trans} that 
\begin{align} \label{eq:PeriodPolyomialIdentity}
\sum_{\ell = 0}^{k-2} {k - 2 \choose \ell} i^{1-\ell} \Lambda(g; \ell + 1)  N_\chi^{\ell+1} \tau^{\ell} = \frac{4\pi^2 \chi(-1)}{N_\psi^{k} (k-1)}  \res_{z=0}\left( z^{1-k} \omega_\psi(z) \omega_{\chi} \left( z\tau\right)\right).
\end{align}

\section{The order of weak functions}
There is a close relationship between the zero order of a weak function at the origin and the zero behavior of the $L$-series corresponding to its Eisenstein series. Therefore, the aim of this section is to study the spaces 
\begin{align*}  
W_N^{0, \ord \geq \ell} := \{ \omega \in W^0_N \mid \mathrm{ord}_{z=0}(\omega) \geq \ell\} 
\end{align*}
in detail. Note that $W_N^{0,\ord \geq 0} = W_N^0$. The behavior of order encodes important information about $L$-series at critical values. Since we are mainly interested in the order of a weak function in $z=0$, we will $\ord_{z=0}(\omega)$ simply call order of $\omega$ and write $\ord(\omega)$. Although we plan to focus entirely on prime levels later, we will show some results in this section for more general levels if the general case is not harder than the prime case.

\begin{definition} Let $N \geq 3$, $m \geq 0$ be integers. Define the following $(m+1) \times (N-1)$-Vandermonde matrix: 
	\begin{align*}
	\mathrm{CotM}(N,m) := \mat{1 & 1 & 1 & \cdots & 1 \\ \cot\left( \tfrac{\pi}{N}\right) & \cot\left( \tfrac{2\pi}{N}\right) & \cot\left( \tfrac{3\pi}{N}\right) & \cdots & \cot\left( \frac{(N-1)\pi}{N}\right) \\ \cot^2\left( \tfrac{\pi}{N}\right) & \cot^2\left( \tfrac{2\pi}{N}\right) & \cot^2\left( \tfrac{3\pi}{N}\right) & \cdots & \cot^2\left( \frac{(N-1)\pi}{N}\right) \\ \vdots & \vdots & \vdots & \ddots & \vdots \\ \cot^m\left( \tfrac{\pi}{N}\right) & \cot^m\left( \tfrac{2\pi}{N}\right) & \cot^m\left( \tfrac{3\pi}{N}\right) & \cdots & \cot^m\left( \frac{(N-1)\pi}{N}\right)  }.
	\end{align*}
	\end{definition}

In the next proposition we give a formula for the order of a weak function. 

\begin{prop} \label{ordform} Let $\omega \not= 0$ be a weak function of level $N$ that has a removable singulariy in $z=0$, this means $\beta(0) = 0$, i.e., 
	\begin{align*}
	\omega(z) = \sum_{j = 1}^{N-1} \beta^\omega(j) \frac{e(z)}{e\left( \frac{j}{N}\right) - e(z)}. 
	\end{align*}
	Then we have the formula
	\begin{align*}
	\ord(\omega) = \sup \left\{ m \in \N_0 : \mat{\beta^\omega(1) \\ \beta^\omega(2) \\ \vdots \\ \beta^\omega(N-1)} \in \ker(\mathrm{CotM}(N,m))\right\}. 
	\end{align*}
	In the case that $\omega = 0$, this formula formally gives $\ord(\omega) = \infty$.
	\end{prop}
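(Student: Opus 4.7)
The plan is to expand $\omega$ explicitly in a Taylor series at $z=0$ and to translate the vanishing of Taylor coefficients into the stated linear conditions on $\beta^\omega$.

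The starting point is the elementary identity
$$\frac{e(z)}{e(j/N) - e(z)} \,=\, \frac{1}{e^{2\pi i(j/N - z)}-1} \,=\, -\tfrac{1}{2} - \tfrac{i}{2}\cot\bigl(\pi(j/N - z)\bigr).$$
Since $\beta^\omega(0)=0$, relation \eqref{eq:betasum} specialises to $\sum_{j=1}^{N-1}\beta^\omega(j)=0$, so the constant $-\tfrac12$ contributions cancel and
$$\omega(z) \,=\, -\frac{i}{2}\sum_{j=1}^{N-1} \beta^\omega(j)\,\cot\bigl(\pi(j/N - z)\bigr).$$
A short induction using $\cot'(x)=-1-\cot^2(x)$ produces polynomials $P_n\in\Z[t]$ of degree \emph{exactly} $n+1$ with $\cot^{(n)}(x)=P_n(\cot x)$; indeed the recursion $P_{n+1}(t)=-(1+t^2)P_n'(t)$ multiplies the leading coefficient of $P_n$ by $-(n+1)$, which is never zero. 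Differentiating $n$ times and evaluating at $z=0$ then gives
$$\omega^{(n)}(0) \,=\, -\frac{i(-\pi)^n}{2}\sum_{j=1}^{N-1} \beta^\omega(j)\,P_n\bigl(\cot(\pi j/N)\bigr),$$
so $\ord(\omega)\geq m$ is equivalent to the $m$ conditions $\sum_j\beta^\omega(j)P_n(\cot(\pi j/N))=0$ for $n=0,1,\ldots,m-1$.

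To conclude I would perform an upper-triangular change of basis. Since $\deg P_n=n+1$, the family $\{1,P_0,P_1,\ldots,P_{m-1}\}$ is a basis of the space of polynomials of degree at most $m$, so, jointly with the automatic identity $\sum_j\beta^\omega(j)=0$ (which supplies the $r=0$ row of the matrix), the $m$ derivative conditions above are equivalent to $\sum_j\beta^\omega(j)\cot^r(\pi j/N)=0$ for all $r=0,1,\ldots,m$, i.e.\ to $\beta^\omega\in\ker\mathrm{CotM}(N,m)$. Taking the supremum over $m$ yields the claimed formula; for $\omega=0$ the zero vector lies in every kernel, consistent with the convention $\ord(0)=\infty$. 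There is no real obstacle in this argument: the only point requiring attention is the exact degree of each $P_n$, which is controlled by the leading-coefficient recursion above.
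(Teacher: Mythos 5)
Your proof is correct and follows essentially the same route as the paper: both rest on expanding $\omega$ as a cotangent sum, observing that the $n$-th Taylor coefficient is a linear combination of the cotangent power sums $\sum_j \beta^\omega(j)\cot^u(\pi j/N)$ for $u \le n+1$ with a \emph{nonzero} coefficient on the top power (your leading-coefficient recursion for $P_n$ is exactly the paper's statement $\delta_{\nu}(\nu)\neq 0$), and then inverting this triangular relation. The only difference is that you rederive the expansion from the recursion $\cot'(x) = -1-\cot^2(x)$, whereas the paper quotes the explicit expansion with Stirling-number coefficients from its earlier work.
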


\begin{proof} In \cite{Franke3} the local Taylor expansion of $\omega$ around $z=0$ was calculated in terms of cotangent sums: 
	\begin{align} 
	\nonumber \omega(z) & = \omega(i\infty) - \frac{1}{2} \sum_{j=1}^{N-1} \beta^\omega(j) - \frac{i}{2} \sum_{\nu = 0}^\infty \left( \sum_{u=0}^{\nu+1} \delta_{\nu+1}(u) \sum_{r=1}^{N-1} \beta^\omega(r) \cot^u\left( \frac{\pi r}{N} \right) \right) (z\pi)^\nu \\
	\label{Taylor} & = - \frac{i}{2} \sum_{\nu = 0}^\infty \left( \sum_{u=0}^{\nu+1} \delta_{\nu+1}(u) \sum_{r=1}^{N-1} \beta^\omega(r) \cot^u\left( \frac{\pi r}{N} \right) \right) (z\pi)^\nu.
	\end{align}
	The $\delta_\nu(u)$ are rational numbers that can be calculated explicitely by 
	\begin{align*}
	\delta_{\nu}(u) := \frac{i^{\nu + u}}{(\nu - 1)!} \sum_{\ell=u-1}^{\nu-1} (-1)^{\nu + \ell - u}2^{\nu -1 - \ell} S^*(\nu - 1, \ell)\left( {\ell \choose u} - {\ell \choose u-1}\right).
	\end{align*}
	Here, the numbers $S^*(n,m)$ are related to the Stirling numbers of the second kind $\left\{ {n \atop m} \right\}$ by the equation 
\begin{align*}
S^*(n,m) := m! \left\{ {n \atop m} \right\} = \sum_{j=0}^m (-1)^j {m \choose j} (m - j)^n, \qquad m \leq n.
\end{align*}
	We have $\delta_\nu(\nu) \not= 0$ for all $\nu \geq 1$. Note that $\omega(i\infty) = \sum_{j=1}^{N-1} \beta(j) = 0$ in the case $\omega$ is weak and continuous in $z=0$. Let
	\begin{align*}
	\mu := \sup \left\{ m \in \N_0 : \mat{\beta^\omega(1) \\ \beta^\omega(2) \\ \vdots \\ \beta^\omega(N-1)} \in \ker(\mathrm{CotM}(N,m))\right\}. 
	\end{align*}
	This is equivalent to 
	\begin{align*}
\forall 0 \leq r \leq \mu :	\sum_{j=1}^{N-1} \beta^\omega(j) \cot^r\left( \frac{\pi j}{N} \right) = 0 \quad \text{and} \quad \sum_{j=1}^{N-1} \beta^\omega(j) \cot^{\mu+1}\left( \frac{\pi j}{N}\right) \not= 0.
	\end{align*}
	By \eqref{Taylor} one easily sees that $\mathrm{ord}_{z=0}(\omega(z)) \geq \mu$. On the other hand, when looking at the $\mu$-th coefficient in the Taylor expansion, we obtain
	\begin{align*}
	\omega^{(\mu)}(0) = \frac{\mu! i \pi^\mu}{2} \delta_{\mu+1}(\mu+1) \sum_{j=1}^{N-1} \beta^\omega(j) \cot^{\mu+1}\left(  \frac{\pi j}{N} \right) \not= 0.
	\end{align*}
	This shows $\mathrm{ord}_{z=0}(\omega(z)) \leq \mu$ and we coclude the proposition. 
	\end{proof}

\begin{bem} \label{bem:algorithm} We can use Proposition \ref{ordform} to extract an algorithm to find weak functions of higher order. For fixed level, one only has to find elements in the kernel of a matrix which of course can be achieved after a finite number of steps. One may use the following explicit inverse, provided in \cite{MS}:
	\begin{align*}
	\begin{pmatrix} 1 & 1 & \cdots & 1 & 1 \\ x_1 & x_2 & \cdots & x_{n-1} & x_n \\ x_1^2 & x_2^2 & \cdots & x_{n-1}^2 & x_n^2 \\ \vdots & \vdots & \vdots & \ddots & \vdots \\ x_1^{n-1} & x_2^{n-1} & \cdots & x_{n-1}^{n-1} & x_n^{n-1} \end{pmatrix}^{-1} = (b_{k,\ell})_{1 \leq k, \ell \leq n}
	\end{align*}
	where
	\begin{align*}
	b_{k,\ell} = \frac{(-1)^{n-\ell}}{\prod_{\substack{m=1 \\ m \not= k}}^n (x_k - x_m)} S_{n-\ell}(x_1, x_2, ..., x_{k-1}, x_{k+1}, ..., x_n)
	\end{align*}
	with
	\begin{align*}
	S_{n}(y_1, ..., y_m) := \sum_{1 \leq j_1 < \cdots < j_n \leq m} y_{j_1} y_{j_2}\cdots y_{j_n} \qquad \qquad (0 \leq n \leq m).
	\end{align*}
	\end{bem}
Note that Proposition \ref{ordform} yields the following corollary. 
\begin{cor} \label{dim} We have $\dim_{\C}(W^{0, \mathrm{ord} \geq \ell}_N) = \max\{0, N - \ell - 2\}$. 
	\end{cor}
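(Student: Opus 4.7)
The plan is to identify $W_N^{0, \ord \geq \ell}$ with the kernel of the $(\ell+1) \times (N-1)$ matrix $\mathrm{CotM}(N,\ell)$ from Proposition \ref{ordform}, and then to compute this kernel dimension via the distinctness of the cotangent values.

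First, I would parameterize $W_N^0$. Every element $\omega \in W_N^0$ admits a unique expansion $\omega(z) = \sum_{j=1}^{N-1} \beta^\omega(j) \tfrac{e(z)}{e(j/N) - e(z)}$ — the index $j = N$ is dropped since removability at $z=0$ forces $\beta^\omega(N) = 0$ — and the relation \eqref{eq:betasum} then reduces to the single linear equation $\sum_{j=1}^{N-1} \beta^\omega(j) = 0$. Hence $W_N^0$ embeds into $\C^{N-1}$ as the hyperplane cut out by this equation, which is precisely the $r=0$ row of $\mathrm{CotM}(N,\ell)$.

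Second, by Proposition \ref{ordform}, the condition $\ord(\omega) \geq \ell$ is equivalent to $(\beta^\omega(j))_{j=1}^{N-1} \in \ker(\mathrm{CotM}(N,\ell))$ (since the family of kernels is nested with $\ell$). Because the $r=0$ row of this matrix already encodes the defining equation of $W_N^0$, no extra constraint has to be imposed separately, and we obtain the linear isomorphism
\begin{align*}
W_N^{0,\ord \geq \ell} \;\cong\; \ker(\mathrm{CotM}(N,\ell)) \;\subset\; \C^{N-1}.
\end{align*}

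The main step is then a rank computation. The crucial observation is that the $N-1$ real numbers $\cot(\pi j/N)$, $j=1,\ldots,N-1$, are pairwise distinct, since $\cot$ is strictly decreasing (and in particular injective) on $(0,\pi)$. Hence any $\min(\ell+1, N-1)$ columns of $\mathrm{CotM}(N,\ell)$ form a Vandermonde matrix with distinct nodes, which has full column rank. Consequently $\mathrm{CotM}(N,\ell)$ has rank exactly $\min(\ell+1, N-1)$, and the rank-nullity theorem yields
\begin{align*}
\dim \ker(\mathrm{CotM}(N,\ell)) \;=\; (N-1) - \min(\ell+1, N-1) \;=\; \max(0,\, N-\ell-2),
\end{align*}
which is the claimed dimension. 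No serious obstacle is expected; the only point requiring care is to verify that the constraint $\sum_j \beta^\omega(j) = 0$ cutting out $W_N^0$ coincides with the $r=0$ row of the Vandermonde matrix, so that it is not counted twice in the codimension computation.
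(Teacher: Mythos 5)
Your proposal is correct and follows essentially the same route as the paper: identify $W_N^{0,\ord\geq\ell}$ with $\ker(\mathrm{CotM}(N,\ell))$ via Proposition \ref{ordform} and apply rank--nullity, the rank being $\min\{N-1,\ell+1\}$. You merely supply two details the paper leaves implicit, namely that the full rank follows from the pairwise distinctness of the nodes $\cot(\pi j/N)$ and that the defining relation of $W_N^0$ is already the $r=0$ row of the matrix.
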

\begin{pro} By Proposition \ref{ordform} we conclude $W^{0,\mathrm{ord} \geq \ell}_N \cong \mathrm{ker}(\mathrm{CotM}(N,\ell))$ as complex vector spaces. So it suffices to compute the dimension of the latter. This is simple linear algebra. The matrix $\mathrm{CotM}(N,\ell)$ has full rank $r = \min\{N-1, \ell+1\}$, hence we obtain with the rank formula: 
	\begin{align*}
	\dim_{\C}(\mathrm{ker}(\mathrm{CotM}(N,\ell))) = (N-1) - r =  \begin{cases} 0, & \qquad \text{if} \, N-1 \leq \ell+1 \\ N - \ell - 2, & \qquad \text{if} \, N-1 > \ell+1.  \end{cases}
	\end{align*}
	This proves the corollary. 
	\end{pro}

In the following, it is convenient to choose a basis $\alpha^{(N)}_0, \alpha^{(N)}_1, ..., \alpha^{(N)}_{N-3}$ of $W_N^0$ with increasing order, i.e., $\ord(\alpha^{(N)}_j) = j$. We have the following.
\begin{prop} \label{prop:alpha-basis} Let $N$ be a positive integer. Then, there exists a basis $\{ \alpha_j^{(N)}\}_{j=0,...,N-3}$ of the space $W_N^0$, such that the following is satisfied for all $0 \leq j \leq N-3$: 
	\begin{enumerate}[(i)]
		\item We have $\alpha_j^{(N)}(z) = z^j + O(z^{N-2})$ as $z \to 0$, and in particular $\ord(\alpha^{(N)}_j) = j$.
		\item We have $\alpha^{(N)}_j(-z) = (-1)^j \alpha^{(N)}_j(z)$.
	\end{enumerate}
\end{prop}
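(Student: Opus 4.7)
The plan is to build the basis by downward induction on $j$, exploiting the order filtration together with the even/odd splitting of $W_N^0$. Corollary \ref{dim} gives
\begin{align*}
W_N^0 = W_N^{0,\ord \geq 0} \supset W_N^{0,\ord \geq 1} \supset \cdots \supset W_N^{0,\ord \geq N-3} \supset W_N^{0,\ord \geq N-2} = \{0\},
\end{align*}
with each successive quotient of dimension exactly one. The map $\tau_j \colon W_N^{0,\ord \geq j} \to \C$ picking out the $j$-th Taylor coefficient at $z=0$ has kernel $W_N^{0,\ord \geq j+1}$ and is therefore surjective, so elements of every prescribed order $j \leq N-3$ exist.

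\textbf{Parity and inductive normalization.} Write $W_N^0 = W_N^{0,+} \oplus W_N^{0,-}$. Since $\widehat{\omega}(z) := \omega(-z)$ preserves $W_N^0$ and the order of vanishing at $0$, the filtration is compatible with this splitting. A function of parity $(-1)^j$ has Taylor series supported only in powers of $z$ of parity $j$, so $\tau_j$ remains surjective when restricted to the $(-1)^j$-eigen-summand of $W_N^{0,\ord \geq j}$. I then construct $\alpha_j^{(N)}$ in decreasing order of $j$. Start with $\alpha_{N-3}^{(N)}$, taken as any element of parity $(-1)^{N-3}$ with $\tau_{N-3}$-value $1$; since the next space in the filtration is zero, it is automatically of the form $z^{N-3} + O(z^{N-2})$. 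For $j < N-3$, pick some $\omega_j$ of parity $(-1)^j$ with $\omega_j(z) = z^j + O(z^{j+1})$ and define
\begin{align*}
\alpha_j^{(N)} := \omega_j - \sum_{\substack{j < i \leq N-3 \\ i \equiv j \pmod{2}}} c_i\, \alpha_i^{(N)},
\end{align*}
choosing the coefficients $c_i$ successively (from $i = j+2$ upward) so as to annihilate the Taylor coefficients of $\omega_j$ at $z^{j+2}, z^{j+4}, \ldots$ The coefficients at powers of opposite parity to $j$ vanish automatically, and every subtracted summand has parity $(-1)^j$, so $\alpha_j^{(N)}$ keeps the correct parity and satisfies $\alpha_j^{(N)}(z) = z^j + O(z^{N-2})$.

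\textbf{Basis property and main obstacle.} The collection $\{\alpha_j^{(N)}\}_{j=0,\ldots,N-3}$ consists of $N-2 = \dim W_N^0$ elements of pairwise distinct vanishing orders, so it is linearly independent and hence a basis, establishing both (i) and (ii). The only delicate point is ensuring that the Taylor-normalization respects the parity constraint; this is taken care of by observing that a function of parity $(-1)^j$ has no opposite-parity Taylor coefficients to cancel, so only basis vectors of the same parity $(-1)^j$ — already available from the inductive hypothesis — ever need to be subtracted.
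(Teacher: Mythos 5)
Your proof is correct and follows essentially the same route as the paper: both use the dimension formula of Corollary \ref{dim} to produce elements of each order $0,\ldots,N-3$, fix the parity by projecting onto the $(-1)^j$-eigenspace of $z\mapsto -z$ (the paper's symmetrization $\alpha_j^{(N)}(z)+(-1)^j\alpha_j^{(N)}(-z)$ is exactly your eigen-summand argument), and then perform a triangular correction by subtracting higher-order basis vectors to achieve $z^j+O(z^{N-2})$. Your write-up is in fact slightly more explicit than the paper's about why only same-parity corrections are ever needed, but the substance is identical.
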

\begin{proof} 
Obviously,
\begin{align*}
W_N^0 = W_N^{0, \ord \geq 0} \subset W_N^{0, \ord \geq 1} \subset W_N^{0, \ord \geq 2} \subset \cdots \subset W_N^{0,\ord \geq N-3},
\end{align*}
and these spaces have dimensions $N-2, N-1, ..., 1$ by Corollary \ref{dim}. So we can successively add basis vectors $\alpha_j^{(N)}(z)$ that have descending order, i.e., we have a power series expansion 
\begin{align*}
\alpha_j^{(N)}(z) = a_j z^j + a_{j+1}z^{j+1} + \cdots, \qquad a_j \not= 0, 
\end{align*}
Note that we can assume that $\alpha_j^{(N)}(-z) = (-1)^j \alpha_j^{(N)}(z)$, as otherwise we can consider 
\begin{align*}
\widehat{\alpha}_j^{(N)}(z) := \alpha_j^{(N)}(z) + (-1)^j \alpha_j^{(N)}(-z),
\end{align*}
which has again order $j$ but satisfies the desired relation (ii). Of course we can normalize each term to achieve first coefficient 1. We are only left to show that we can achieve $\alpha_j^{(N)}(z) = z^j + O(z^{N-2})$. There is nothing to show in the cases $j \in \{N-3, N-4\}$. For $j = N-5$, we have $\alpha_{N-5}^{(N)}(z) = z^{N-5} + b_1 z^{N-3} + O(z^{N-2})$ for some $b_1 \in \C$, so consider $\alpha_{N-5}^{(N)}(z) - b_1 \alpha_{N-3}^{(N)}(z)$ instead. By inductively continuing this procedure, we obtain the assertion.
\end{proof}
\begin{bem} Note that the algorithm proposed in Remark \ref{bem:algorithm} can be used to calculate the basis vectors $\alpha_j^{(N)}$.
\end{bem}
We give an example.
\begin{bsp} \label{bsp:Basis} Let $N=5$. Put $\mu_1 := \cot(\frac{\pi}{5}) = \sqrt{1+\frac{2}{\sqrt{5}}}$ and $\mu_2 := \cot(\frac{2\pi}{5}) = \sqrt{1-\frac{2}{\sqrt{5}}}$. We find, using 
	\begin{align*}
	\ker \begin{pmatrix} 1 & 1 & 1 & 1 \\ \mu_1 & \mu_2 & - \mu_2 & - \mu_1 \\ \mu_1^2 & \mu_2^2 & \mu_2^2 & \mu_1^2 \end{pmatrix} = \left< \begin{pmatrix} 1 \\  - \frac{\mu_1}{\mu_2} \\ \frac{\mu_1}{\mu_2} \\ -1 \end{pmatrix} \right>
	\end{align*}
	that
	\begin{align*}
	\alpha^{(5)}_2(z) & = \frac{5 i}{4 \pi^2 \sqrt{5+2 \sqrt{5}}} \left( \frac{e(z)}{e(\frac15) - e(z)} - \frac{\mu_1}{\mu_2} \frac{e(z)}{e(\frac25) - e(z)} + \frac{\mu_1}{\mu_2} \frac{e(z)}{e(\frac35) - e(z)} - \frac{e(z)}{e(\frac45) - e(z)}\right) \\
	& = z^2 + O\left(z^4\right).
	\end{align*}
	Similarly, we find
	\begin{align*}
	\alpha^{(5)}_1(z) & = \frac{\sqrt{5}i}{4\pi} \left( \frac{e(z)}{e(\frac15) - e(z)} -  \frac{e(z)}{e(\frac25) - e(z)} - \frac{e(z)}{e(\frac35) - e(z)} + \frac{e(z)}{e(\frac45) - e(z)} \right) \\
	& = z + O\left( z^3 \right), \\
	\alpha^{(5)}_0(z) & = \frac{i}{\sqrt{2 - \frac{2}{\sqrt{5}}}} \left( \frac{e(z)}{e(\frac15) - e(z)} -  \frac{e(z)}{e(\frac25) - e(z)} + \frac{e(z)}{e(\frac35) - e(z)} - \frac{e(z)}{e(\frac45) - e(z)} \right) \\
	& - \frac{i \left(15+\sqrt{5}\right)}{4 \sqrt{5+2 \sqrt{5}}} \left( \frac{e(z)}{e(\frac15) - e(z)} - \frac{\mu_1}{\mu_2} \frac{e(z)}{e(\frac25) - e(z)} + \frac{\mu_1}{\mu_2} \frac{e(z)}{e(\frac35) - e(z)} - \frac{e(z)}{e(\frac45) - e(z)}\right) \\
	& = c_1 \frac{e(z)}{e(\frac15) - e(z)} + c_2 \frac{e(z)}{e(\frac25) - e(z)} - c_2 \frac{e(z)}{e(\frac35) - e(z)} - c_1 \frac{e(z)}{e(\frac45) - e(z)} \\
	& = 1 + O\left(z^4\right),
	\end{align*}
	with
	\begin{align*}
	c_1 & := -i \, \sqrt{\frac{25}{2}-\frac{11 \sqrt{5}}{2}}, \qquad c_2 := i \, \sqrt{\frac{25}{2} +\frac{11 \sqrt{5}}{2}}.
	\end{align*}
	\end{bsp}
The property of ascending order is very useful. With its help we can immediately give bases for the subspaces $W_N^{0, \ord \geq \ell}$ just introduced:
\begin{prop} \label{prop:ordlbasis} Let $0 \leq \ell \leq N-3$. The set $\{ \alpha_j^{(N)}\}_{j=\ell,...,N-3}$ is a basis of $W_N^{0, \ord \geq \ell}$.
	\end{prop}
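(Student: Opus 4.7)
The plan is to combine a dimension count with a distinct-order linear independence argument. Everything needed has essentially been set up already in Proposition \ref{prop:alpha-basis} and Corollary \ref{dim}, so this should be short.

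First, I would verify containment. By Proposition \ref{prop:alpha-basis}(i), each $\alpha_j^{(N)}$ satisfies $\ord(\alpha_j^{(N)}) = j$, so for every $j \in \{\ell, \ldots, N-3\}$ we have $\alpha_j^{(N)} \in W_N^{0,\ord \geq \ell}$. Next I would check the cardinality: the proposed set has exactly $(N-3) - \ell + 1 = N - \ell - 2$ elements, which by Corollary \ref{dim} agrees with $\dim_{\C}(W_N^{0,\ord \geq \ell})$.

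For linear independence I would argue directly from the distinct-order property: suppose $\sum_{j=\ell}^{N-3} c_j \alpha_j^{(N)} = 0$ is a nontrivial relation, and let $j_0$ be the smallest index with $c_{j_0} \neq 0$. Since $\alpha_{j_0}^{(N)}(z) = z^{j_0} + O(z^{j_0+1})$ and every $\alpha_j^{(N)}$ with $j > j_0$ vanishes at $z=0$ to order at least $j_0+1$, the sum has a non-vanishing coefficient $c_{j_0}$ at $z^{j_0}$, a contradiction. (Alternatively, one can simply cite that $\{\alpha_j^{(N)}\}_{j=0,\ldots,N-3}$ is a basis of $W_N^0$ by Proposition \ref{prop:alpha-basis}, so any subset is linearly independent.)

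Combining the three facts — a linearly independent subset of $W_N^{0,\ord \geq \ell}$ of size equal to its dimension — yields that $\{\alpha_j^{(N)}\}_{j=\ell,\ldots,N-3}$ is a basis. There is no substantive obstacle; the only thing to be careful about is to invoke the correct parts of Proposition \ref{prop:alpha-basis} and to observe that the dimension formula of Corollary \ref{dim} matches the count exactly.
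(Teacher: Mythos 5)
Your proof is correct and is exactly the argument the paper intends: its proof reads only ``Immediate with Corollary \ref{dim} and Proposition \ref{prop:alpha-basis},'' and your write-up (containment via $\ord(\alpha_j^{(N)})=j$, linear independence as a subset of a basis or via the distinct-order argument, and the dimension count $N-\ell-2$ matching Corollary \ref{dim}) is precisely the spelled-out version of that. No issues.
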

\begin{proof} Immediate with Corollary \ref{dim} and Proposition \ref{prop:alpha-basis}.
	\end{proof}
Now, we take a closer look at the spaces 
\begin{align} \label{eq:V-definition}
V_{M,N}^{(\ell_1, \ell_2)} := W_M^{0, \ord \geq \ell_1} \otimes W_N^{0, \ord \geq \ell_2}. 
\end{align}
The following is an immediate consequence of Corollary \ref{dim}.

\begin{cor} \label{cor:Vdim} Let $0 \leq \ell_1 \leq M-2$ and $0 \leq \ell_2 \leq N-2$. We then have the formula 
	\begin{align*}
	\dim_\C\left(V_{M,N}^{(\ell_1, \ell_2)}\right) = (M-\ell_1 -2)(N-\ell_2-2).
	\end{align*}
\end{cor}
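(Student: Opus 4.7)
The plan is to reduce the statement to the multiplicativity of dimensions under tensor products and then invoke Corollary \ref{dim}. Specifically, since $V_{M,N}^{(\ell_1, \ell_2)}$ is by definition \eqref{eq:V-definition} the tensor product $W_M^{0, \ord \geq \ell_1} \otimes_\C W_N^{0, \ord \geq \ell_2}$ of two finite-dimensional complex vector spaces, we have
\begin{align*}
\dim_\C\left(V_{M,N}^{(\ell_1, \ell_2)}\right) = \dim_\C\left(W_M^{0, \ord \geq \ell_1}\right) \cdot \dim_\C\left(W_N^{0, \ord \geq \ell_2}\right).
\end{align*}

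Next I would apply Corollary \ref{dim} to each factor. Using the hypotheses $0 \leq \ell_1 \leq M-2$ and $0 \leq \ell_2 \leq N-2$, the quantities $M - \ell_1 - 2$ and $N - \ell_2 - 2$ are both nonnegative, so the max in the formula of Corollary \ref{dim} is attained by the second argument, giving $\dim_\C(W_M^{0, \ord \geq \ell_1}) = M - \ell_1 - 2$ and $\dim_\C(W_N^{0, \ord \geq \ell_2}) = N - \ell_2 - 2$. Multiplying these yields the claimed formula.

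There is essentially no obstacle here: the statement is a direct consequence of Corollary \ref{dim}, and the only small point to keep track of is checking that the range constraints on $\ell_1, \ell_2$ eliminate the $\max\{0, \cdot\}$ correction, which they do by construction. An explicit basis witnessing this dimension count is already furnished by Proposition \ref{prop:ordlbasis}, namely $\{\alpha_i^{(M)} \otimes \alpha_j^{(N)}\}_{\ell_1 \leq i \leq M-3,\ \ell_2 \leq j \leq N-3}$, which could optionally be mentioned for concreteness.
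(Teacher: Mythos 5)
Your proof is correct and matches the paper's approach: the paper states this corollary as an immediate consequence of Corollary \ref{dim}, and your argument (multiplicativity of dimension under tensor products plus the observation that the range constraints on $\ell_1,\ell_2$ make the $\max\{0,\cdot\}$ reduce to $M-\ell_1-2$ and $N-\ell_2-2$) is exactly the intended one. The optional remark about the explicit basis from Proposition \ref{prop:ordlbasis} is also consistent with the decomposition \eqref{eq:Vbasis} given right after the corollary.
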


We can use the elements $\alpha_j^{(M)}$ and $\alpha_j^{(N)}$, to give a basis for $V_{M,N}^{(\ell_1, \ell_2)}$. Ideed, for $0 \leq \ell_1 \leq M-3$ and $0 \leq \ell_2 \leq N-3$, note that
\begin{align} \label{eq:Vbasis}
V_{M,N}^{(\ell_1, \ell_2)} = \bigoplus_{\substack{c \geq \ell_1 \\ d \geq \ell_2}} \C \left(\alpha_c^{(M)} \otimes \alpha_d^{(N)}\right)
\end{align}
with Proposition \ref{prop:ordlbasis} .

While we could easily assign orders to elements from $W_M$ and $W_N$, it is not obvious how we should interpret elements in $V_{M,N}^{(\ell_1, \ell_2)}$ in this respect. One starting point is to assign elementary tensors pairs $(\ord \geq \ell_1, \ord \geq \ell_2)$ and this notion of order in the case of space $V_{M,N}^{(\ell_1, \ell_2)}$ is so far to be understood rather abstractly. The next step is to define orders for arbitrary elements in $W^0_{M} \otimes W^0_{N}$. In the following, we reinterpret it into a more tangible term using complex analysis. Before we can do this, we need the following. 

\begin{definition} \label{def:power-series-orders} Let $\mu$ and $\nu$ be nonngative integers. Define $\C[[z]](\tau)^{(\nu, \mu)}$ to be the vector space of meromorphic functions $f \in \mathcal{M}(\C \times \C)$, such that locally 
	\begin{align*}
	f(z,\tau) = \sum_{j=0}^\infty P_j(\tau)z^j
	\end{align*}
	with polynomials $P_j(\tau)$ (of degree at most $j$), such that $j - \mathrm{deg}(P_j(\tau)) \geq \mu$ and $j - \mathrm{deg}(\tau^j P_j(\frac{1}{\tau})) \geq \nu$ for all $j \in \N_0$.
\end{definition}
For the previous definition one should note that we use the convention $\deg(0) := -\infty$. We also note the following.

\begin{prop} \label{prop:graded-L} The space 
	\begin{align*}
	\mathfrak{L} := \bigoplus_{(\nu, \mu) \in \N_0^2} \C[[z]](\tau)^{(\nu,\mu)}
	\end{align*}
	is a graded algebra via the component multiplication induced by 
	\begin{align*}
	\C[[z]](\tau)^{(\nu_1,\mu_1)} \times \C[[z]](\tau)^{(\nu_2,\mu_2)} & \to \C[[z]](\tau)^{(\nu_1+\nu_2,\mu_1+\mu_2)}, \\
	(f,g) & \mapsto fg.
	\end{align*}
	For $f_1 \in \C[[z]](\tau)^{(\nu_1,\mu_1)}$ and $f_2 \in \C[[z]](\tau)^{(\nu_2,\mu_2)}$ we have 
	\begin{align*} 
	f_1 + f_2 \in \C[[z]](\tau)^{(\min\{ \nu_1, \nu_2\},\min\{\mu_1, \mu_2\})}.
	\end{align*}
\end{prop}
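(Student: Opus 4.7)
The plan is to unpack Definition \ref{def:power-series-orders} into a symmetric pair of conditions on the polynomial coefficients $P_j$, and then verify the product and sum claims by standard degree/valuation bookkeeping.

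First, I would reinterpret the condition $j - \deg(\tau^j P_j(1/\tau)) \geq \nu$ as a statement about the valuation of $P_j$. Writing $P_j(\tau) = \sum_{i} a_i \tau^i$, one computes $\tau^j P_j(1/\tau) = \sum_i a_i \tau^{j-i}$, so that $\deg(\tau^j P_j(1/\tau)) = j - \mathrm{val}(P_j)$ where $\mathrm{val}(P_j) := \min\{i : a_i \neq 0\}$ (and $\mathrm{val}(0) := +\infty$). Thus $f \in \C[[z]](\tau)^{(\nu,\mu)}$ precisely when each coefficient polynomial $P_j(\tau)$ satisfies $\deg(P_j) \leq j - \mu$ and $\mathrm{val}(P_j) \geq \nu$. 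This symmetric formulation makes the monoid structure transparent.

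Next, I would verify the multiplication claim using the Cauchy product. If $f_1 = \sum_j P_j^{(1)} z^j \in \C[[z]](\tau)^{(\nu_1,\mu_1)}$ and $f_2 = \sum_k P_k^{(2)} z^k \in \C[[z]](\tau)^{(\nu_2,\mu_2)}$, then $f_1 f_2 = \sum_n Q_n z^n$ with $Q_n = \sum_{j+k=n} P_j^{(1)} P_k^{(2)}$. The standard bounds $\deg(PQ) \leq \deg P + \deg Q$ and $\mathrm{val}(PQ) \geq \mathrm{val}(P) + \mathrm{val}(Q)$ applied to each summand give $\deg(P_j^{(1)} P_k^{(2)}) \leq (j - \mu_1) + (k - \mu_2) = n - (\mu_1 + \mu_2)$ and $\mathrm{val}(P_j^{(1)} P_k^{(2)}) \geq \nu_1 + \nu_2$. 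Taking maxima for degrees and minima for valuations across the finitely many summands preserves both bounds, so $Q_n$ has degree at most $n - (\mu_1 + \mu_2)$ and valuation at least $\nu_1 + \nu_2$, placing $f_1 f_2$ in $\C[[z]](\tau)^{(\nu_1+\nu_2, \mu_1+\mu_2)}$. Associativity and distributivity are inherited from the ambient algebra of meromorphic functions on $\C \times \C$, and the grading is by the additive monoid $\N_0^2$.

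Finally, the addition statement is immediate from the same identifications: the $j$-th coefficient of $f_1 + f_2$ is $P_j^{(1)} + P_j^{(2)}$, whose degree is at most $\max(j - \mu_1, j - \mu_2) = j - \min(\mu_1,\mu_2)$ and whose valuation is at least $\min(\nu_1, \nu_2)$, yielding the desired inclusion into $\C[[z]](\tau)^{(\min\{\nu_1,\nu_2\},\min\{\mu_1,\mu_2\})}$. There is no serious obstacle here; the only real subtlety is the translation of the second condition of Definition \ref{def:power-series-orders} into a valuation bound, after which everything reduces to the additivity of $\deg$ and $\mathrm{val}$ over the integral domain $\C[\tau]$.
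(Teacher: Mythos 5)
Your proof is correct and follows essentially the same route as the paper's: expand in powers of $z$, apply the Cauchy product, and use additivity of degree together with $\deg(\sum) \leq \max$ and the analogous bound for the second condition. Your reformulation of $j - \deg\left(\tau^j P_j(1/\tau)\right) \geq \nu$ as the valuation bound $\mathrm{val}(P_j) \geq \nu$ is a clean way of making precise the paper's remark that the second half of the argument ``works the same,'' but it does not change the substance of the proof.
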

\begin{proof} We only have to show that the multiplication is well defined. For $f(z, \tau) := \sum_{j=0}^\infty P_j(\tau)z^j$ and $g(z, \tau) := \sum_{j=0}^\infty Q_j(\tau)z^j$ we find with standard Cauchy convolution 
	\begin{align*}
	(fg)(z, \tau) = \sum_{j=0}^\infty \left( \sum_{n=0}^j P_{n}(\tau)Q_{j-n}(\tau)\right) z^j.
	\end{align*}
	Obviously, the product series converges again locally around $0$ for all values of $\tau$. Now we have
	\begin{align*}
	j - \deg(P_n(\tau)Q_{j-n}(\tau)) = n + (j-n) - \deg(P_n(\tau)) - \deg(Q_{j-n}(\tau)) \geq \mu_1 + \mu_2,
	\end{align*}
	and the proof of $j - \deg(\tau^j P_n(\frac{1}{\tau}) Q_{j-n}(\frac{1}{\tau})) \geq \nu_1 + \nu_2$ works the same. Since $$\deg\left(\sum_{j=0}^n P_{j}(\tau)Q_{n-j}(\tau)\right) \leq \max_{0 \leq j \leq n}\{ \deg(P_j(\tau)Q_{n-j}(\tau))\},$$ 
	the proof of the first assertion is complete. For the second assertion, consider 
	\begin{align*}
	f_1(z,\tau) + f_2(z,\tau) = \sum_{j=0}^\infty (P_{1,j}(\tau) + P_{2,j}(\tau))z^j.
	\end{align*}
	Note that $j - \deg(P_{1,j}(\tau) + P_{2,j}(\tau)) \geq j - \max\{\deg(P_{1,j}(\tau)), \deg(P_{2,j}(\tau))\} = \min\{j - \deg(P_{1,j}(\tau)), j - \deg(P_{2,j}(\tau))\} = \min\{\mu_1, \mu_2\}$. Similarly, we find $j - \deg(\tau^j (P_{1,j}(\frac{1}{\tau}) + P_{2,j}(\frac{1}{\tau}))) \geq j - \max\{\deg(\tau^j P_{1,j}(\frac{1}{\tau})), \deg(\tau^j P_{2,j}(\frac{1}{\tau}))\} = \min\{\nu_1, \nu_2\}$. 
\end{proof}
We note that there are embeddings $\C[[z]](\tau)^{(\nu_1,\mu_1)} \to \C[[z]](\tau)^{(\nu,\mu)}$ if $\mu \leq \mu_1$ and $\nu_1 \leq \nu$.

Although the definition of the vector space $\C[[z]](\tau)$ and its subspaces $\C[[z]](\tau)^{(\mu, \nu)}$ is useful, it is not yet entirely sufficient to introduce precisely the notion of order in two variables. However, the foundation for the motivation has already been laid:

\begin{definition} \label{def:ordercomplex} We say that a, for all $\tau$ locally convergent, power series
	\begin{align*}
	f(z, \tau) := \sum_{j=0}^\infty P_j(\tau)z^j, \qquad (P_j \in \C[X], \deg(P_j) \leq j)
	\end{align*}
	has order 
	\begin{align*}
	\ord(f) := \left( \min_{j \geq 0} j - \mathrm{deg}\left(\tau^j  P_j\left(\frac{1}{\tau}\right)\right), \min_{j \geq 0} j - \mathrm{deg}(P_j(\tau))  \right).
	\end{align*}
	We also put $\ord(0) := (\infty,\infty)$.
	\end{definition}

It is easy to prove the following important observation. 

\begin{prop} \label{prop:tensor-analytic} Let $f$ and $g$ be globally meromorphic functions analytic in a neighborhood of 0 with $\ord_{z=0}(f(z)) = \mu$ and $\ord_{z=0}(g(z)) = \nu$. Put $h(z,\tau) := f(z)g(z\tau)$. Then we have $\ord(h) = (\nu, \mu)$. In particular, $h \in \C[[z]](\tau)^{(\nu, \mu)}$. 
\end{prop}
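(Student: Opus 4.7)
The plan is to write $f$ and $g$ as Taylor series around the origin and multiply them out, then read off the polynomial coefficients $P_j(\tau)$ of $h(z,\tau)$ viewed as a power series in $z$. By hypothesis $f(z) = \sum_{n \geq \mu} a_n z^n$ with $a_\mu \neq 0$ and $g(w) = \sum_{m \geq \nu} b_m w^m$ with $b_\nu \neq 0$, both converging on a neighborhood of $0$. Substituting $w = z\tau$ and applying Cauchy convolution gives
\begin{align*}
h(z,\tau) = \sum_{j \geq \mu+\nu} \left( \sum_{n=\mu}^{j-\nu} a_n b_{j-n} \tau^{j-n} \right) z^j,
\end{align*}
so $P_j(\tau) = \sum_{n=\mu}^{j-\nu} a_n b_{j-n}\,\tau^{j-n}$ for $j \geq \mu + \nu$, and $P_j = 0$ otherwise. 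The double series clearly converges for all $\tau$ in the joint domain of convergence, so $h$ is meromorphic on $\C \times \C$ as required.

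Next I would determine the two minima appearing in Definition \ref{def:ordercomplex}. For the second coordinate, note that the index $n = \mu$ contributes the monomial $a_\mu b_{j-\mu}\,\tau^{j-\mu}$, whence $\deg(P_j) \leq j - \mu$; equality holds at $j = \mu+\nu$ because there $b_{j-\mu} = b_\nu \neq 0$. Hence $j - \deg(P_j) \geq \mu$ for all $j$, with equality at $j = \mu+\nu$. For the first coordinate, the substitution $\tau \mapsto 1/\tau$ yields $\tau^j P_j(1/\tau) = \sum_{n=\mu}^{j-\nu} a_n b_{j-n}\,\tau^n$, whose degree in $\tau$ is at most $j - \nu$, with equality (and nonvanishing leading coefficient $a_\mu b_\nu$) at $j = \mu+\nu$. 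Therefore $j - \deg(\tau^j P_j(1/\tau)) \geq \nu$ for all $j$, with equality at $j = \mu + \nu$.

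Combining these two observations gives $\ord(h) = (\nu,\mu)$. At the same time the degree bounds established along the way verify $h \in \C[[z]](\tau)^{(\nu,\mu)}$; the extra constraint $\deg(P_j) \leq j$ in Definition \ref{def:power-series-orders} is automatic from $\deg(P_j) \leq j - \mu \leq j$. The only subtle point is confirming that the minima are actually attained, which amounts to checking that the leading coefficients $a_\mu$ and $b_\nu$ do not cancel in the relevant terms; this is immediate from the hypothesis $\ord_{z=0}(f) = \mu$ and $\ord_{z=0}(g) = \nu$. No real obstacle is anticipated, the proof being essentially a bookkeeping exercise with Cauchy convolution and the reciprocal-polynomial symmetry built into Definition \ref{def:ordercomplex}.
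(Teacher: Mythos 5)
Your argument is correct and follows essentially the same route as the paper: Taylor-expand $f$ and $g$, form the Cauchy convolution to identify $P_j(\tau)$, bound $\deg(P_j)$ and $\deg(\tau^j P_j(1/\tau))$ by $j-\mu$ and $j-\nu$ respectively, and verify attainment at $j=\mu+\nu$ via the nonvanishing of $a_\mu b_\nu$. The only difference is a cosmetic re-indexing of the convolution sum; nothing further is needed.
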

\begin{proof} Using the Taylor expansions $f(z) := \sum_{n=0}^\infty a_n z^n$ and $g(w) := \sum_{m=0}^\infty b_m w^m$ we find 
	\begin{align*}
	f(z)g(z\tau) = \left( \sum_{n=0}^\infty a_n z^n \right) \left( \sum_{m=0}^\infty b_m \tau^m z^m \right) = \sum_{j=0}^\infty \left( \sum_{n=0}^j a_{j-n} b_n \tau^n \right)z^j.
	\end{align*}
	Since by assumption $a_0 = a_1 = \cdots = a_{\mu-1} = 0$ and $b_0 = b_1 = \cdots = b_{\nu-1} = 0$, but $a_{\mu} b_{\nu} \not= 0$ we find for all $j \geq 0$:
	\begin{align*}
	\deg\left( \sum_{n=0}^j a_{j-n} b_n \tau^n\right) =  \deg\left( \sum_{n=0}^{j-\mu} a_{j-n} b_n \tau^n\right)  \leq  j - \mu
	\end{align*}
	and 
	\begin{align*}
	\deg\left( \sum_{j=0}^n a_{j-n} b_n \tau^{j-n}\right) = \deg\left( \sum_{j=0}^{j-\nu} a_{j-n} b_n \tau^{j-n}\right) \leq j - \nu.
	\end{align*}
	Now choose $j := \nu + \mu \geq 0$. In this particular case, we find
	\begin{align*}
	\deg\left( \sum_{n=0}^{\nu+\mu} a_{\nu+\mu-n} b_n \tau^n\right) = \deg\left( a_\mu b_\nu \tau^{\nu} + \sum_{n=0}^{\nu-1} a_{\nu+\mu-n} b_n \tau^n\right) = \nu = \nu + \mu - \mu.
	\end{align*}
	Similarly,
		\begin{align*}
	\deg\left( \sum_{n=0}^{\nu+\mu} a_{\nu+\mu-n} b_n \tau^{\nu+\mu-n}\right) = \deg\left( a_\mu b_\nu \tau^{\mu} + \sum_{n=0}^{\mu-1} a_{\nu+\mu-n} b_n \tau^{\nu+\mu-n}\right) = \mu = \mu + \nu - \nu.
	\end{align*}
	This proves $\ord(h) = (\nu, \mu)$, and clearly $h \in \C[[z]](\tau)^{(\nu, \mu)}$.
\end{proof}
The following approach is motivated by Proposition \ref{prop:tensor-analytic}. We are able to construct meromorphic functions of two variables $z$ and $\tau$ from the data $\omega \otimes \eta$ as follows: On elementary tensors define a linear map $\Xi_{M,N}$ by
\begin{align}
\label{map} \omega \otimes \eta \longmapsto \eta(z)\omega(z\tau),
\end{align}
and of course this extends to a linear map $W^0_M \otimes W_N^0 \rightarrow \C[[z]](\tau)$ by Proposition \ref{prop:tensor-analytic}. Our next goal is to prove that this map is injective, and hence, that we do not lose information when going from the spaces $W_M^0 \otimes W_N^0$ to meromorphic  functions. To do so, it is useful to introduce a notion for the subspace of $\C[[z]](\tau)$ genrated by weak pairs $\omega \otimes \eta$. We put
\begin{align*}
J^0_{M,N} := \left< \eta(z)\omega(z\tau) \ \Big{|} \ \omega \otimes \eta \in W_M^0 \otimes W_N^0 \right>.
\end{align*}
Also put 
\begin{align*}
J_{M,N,k}^0 :=  \left< \eta(z)\omega(z\tau) \ \Big{|} \ \omega \otimes \eta \in \left(W_M^0 \otimes W_N^0\right)_k \right>
\end{align*}
and denote the corresponding restricted map $(W_M^0 \otimes W_N^0)_k \to J_{M,N,k}^0$ by $\Xi_{M,N,k}$. 
We can identify both spaces. 
\begin{prop} \label{prop:injective} Let $(a_{c,d})_{0 \leq c \leq M-3, 0\leq d \leq N-3}$ be arbitrary complex numbers and
	\begin{align*}
	\mathcal{M} := \{ (c,d) \in \N_0^2 \colon 0 \leq c \leq M-3, c \leq d \leq c + N- 3\}.
	\end{align*}
	 Then we have 
	\begin{align*}
	\sum_{\substack{0 \leq c \leq M-3 \\ 0 \leq d \leq N-3}} a_{c,d} \alpha^{(N)}_d(z) \alpha_{c}^{(M)}(z\tau) = \sum_{\substack{0 \leq c \leq M-3 \\ c \leq d \leq c+N-3}} a_{c,d-c} \tau^c z^d + \sum_{(n,m) \in \N_0^2 \setminus \mathcal{M}} c_{m,n}\tau^m z^n
	\end{align*}
	for some complex $c_{m,n}$. In particular, the linear maps $\Xi_{M,N} \colon W^0_M \otimes W_N^0 \rightarrow J^0_{M,N}$ and $\Xi_{M,N,k} \colon (W^0_M \otimes W_N^0)_k \rightarrow J^0_{M,N,k}$ described by \eqref{map} are both isomorphisms.
\end{prop}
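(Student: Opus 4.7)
The plan is to expand the product $\alpha_d^{(N)}(z)\alpha_c^{(M)}(z\tau)$ using Proposition~\ref{prop:alpha-basis}(i), isolate a single ``leading'' monomial and verify that every other monomial appearing in the expansion has its exponent pair outside $\mathcal{M}$. Injectivity of $\Xi_{M,N}$ will then follow by comparing coefficients at the lattice points of $\mathcal{M}$, where each basis element of $W_M^0 \otimes W_N^0$ contributes to exactly one position.

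Using Proposition~\ref{prop:alpha-basis}(i), I would write $\alpha_c^{(M)}(w) = w^c + r_c(w)$ and $\alpha_d^{(N)}(z) = z^d + s_d(z)$, where $r_c(w)$ vanishes to order at least $M-2$ at $w=0$ and $s_d(z)$ vanishes to order at least $N-2$ at $z=0$. Substituting $w=z\tau$ and multiplying gives
\[
\alpha_d^{(N)}(z)\alpha_c^{(M)}(z\tau) = z^{c+d}\tau^{c} + z^d r_c(z\tau) + s_d(z)(z\tau)^{c} + s_d(z)r_c(z\tau).
\]
A pair $(m,n)$ belongs to $\mathcal{M}$ iff $0 \leq m \leq M-3$ and $m \leq n \leq m+N-3$. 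Any monomial $\tau^m z^n$ occurring in the two terms containing $r_c(z\tau)$ has $m = j \geq M-2$, violating $m \leq M-3$; any monomial in $s_d(z)(z\tau)^c$ has $m=c$ and $n = c+i \geq c+N-2 > c+N-3$, violating $n \leq m+N-3$. Hence the three correction terms contribute only monomials with exponent pairs in $\N_0^2 \setminus \mathcal{M}$. The principal terms $z^{c+d}\tau^c$, on the other hand, parametrize $\mathcal{M}$ bijectively via the shift $(c,d) \mapsto (c,c+d)$ on the admissible index set $\{0,\ldots,M-3\} \times \{0,\ldots,N-3\}$, each being attained exactly once with coefficient $a_{c,d}$. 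Summing over $c,d$ and relabelling the second index as $d' := c+d$ yields the identity claimed, with the $c_{m,n}$ collecting all off-$\mathcal{M}$ contributions.

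For the isomorphism statements, recall from \eqref{eq:Vbasis} (with $\ell_1=\ell_2=0$) that every element of $W_M^0 \otimes W_N^0$ has a unique expansion $\sum a_{c,d}\,(\alpha_c^{(M)} \otimes \alpha_d^{(N)})$. If its image under $\Xi_{M,N}$ vanishes, then, because the correction terms avoid $\mathcal{M}$, the coefficient of $\tau^c z^{c+d}$ in the expanded sum equals precisely $a_{c,d}$; forcing this to vanish for every admissible $(c,d)$ gives $a_{c,d}=0$ throughout, proving injectivity. Surjectivity is tautological from the definition $J_{M,N}^0 := \mathrm{im}(\Xi_{M,N})$, and the same argument restricted to the graded subspace $(W_M^0 \otimes W_N^0)_k$ settles $\Xi_{M,N,k}$. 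The only delicate part of the proof is the exponent bookkeeping that keeps every error monomial outside $\mathcal{M}$; once this is established, the rest is a triangular extraction of coefficients.
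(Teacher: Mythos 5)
Your proof is correct and follows essentially the same route as the paper: expand each factor via Proposition \ref{prop:alpha-basis}(i), observe that all correction monomials land outside $\mathcal{M}$ while the leading monomials $\tau^c z^{c+d}$ enumerate $\mathcal{M}$ bijectively, and deduce injectivity by comparing coefficients (surjectivity being immediate from the definition of $J^0_{M,N}$). Your four-term expansion just makes the paper's exponent bookkeeping slightly more explicit.
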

\begin{proof} The first part is a simple application of the identity theorem for power series. By construction we have for $0 \leq c \leq M-3$ and $0 \leq n \leq N-3$
	\begin{align*}
	\alpha^{(N)}_d(z) \alpha_{c}^{(M)}(z\tau) & = \left( \tau^c z^c + \sum_{j \geq M-2} a_{j} \tau^j z^j\right)\left( z^d+ \sum_{j \geq N-2} b_j z^j\right) \\
	& = \tau^c z^{c+d} + \sum_{(m,n) \in \N_0^2 \setminus \mathcal{M}} r_{m,n} \tau^m z^n
	\end{align*}
	for some complex numbers $r_{m,n}$. Since we have the bijection $\{0, \ldots, M-3\} \times \{0, \ldots, N-3\} \to \mathcal{M}$ with $(c,d) \mapsto (c,c+d)$ the claim now follows with the Identity theorem for power series. Again by the identity theorem the claimed isomorphisms hold, as the monomials $\tau^c z^d$ are linearly independent. 
\end{proof}
\begin{bem} This argument, built on the elementary theory of rational functions, can be truncated by means of Eisenstein series. This concerns in particular the case of prime $M$ and $N$, since the isomorphism from Proposition \ref{prop:prime-injective} factorizes over $J_{M,N}^0$. 
\end{bem}
\begin{cor} \label{cor:p1p2iso} Let $p_1$ and $p_2$ be odd primes and $k \geq 3$. Then we have an isomorphism $J_{p_1,p_2,k}^0 \to \mathcal{E}_k(\Gamma_1(p_1p_2))_0^{(p_1,p_2)}$ given by $\eta(z) \omega(z\tau) \mapsto \vartheta_k(\omega \otimes \eta; p_2\tau)$.
\end{cor}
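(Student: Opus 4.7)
My approach is to realise the claimed map as the composition of two isomorphisms that have already been established. By Proposition \ref{prop:injective} (restricted to the subspace $(W_{p_1}^0\otimes W_{p_2}^0)_k$), the assignment $\Xi_{p_1,p_2,k}\colon \omega\otimes\eta \mapsto \eta(z)\omega(z\tau)$ is an isomorphism onto $J_{p_1,p_2,k}^0$; its inverse sends each generator $\eta(z)\omega(z\tau)$ back to the unique elementary tensor $\omega\otimes\eta$. Proposition \ref{prop:prime-injective} in turn provides an isomorphism $(W_{p_1}^0\otimes W_{p_2}^0)_k \to \mathcal{E}_k(\Gamma_1(p_1p_2))_0^{(p_1,p_2)}$ acting by $\omega\otimes\eta \mapsto \vartheta_k(\omega\otimes\eta;p_2\tau)$.

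The plan is therefore to define
\[
\Phi \colon J_{p_1,p_2,k}^0 \overset{\Xi_{p_1,p_2,k}^{-1}}{\longrightarrow} (W_{p_1}^0\otimes W_{p_2}^0)_k \overset{\vartheta_k(\,\cdot\,;p_2\tau)}{\longrightarrow} \mathcal{E}_k(\Gamma_1(p_1p_2))_0^{(p_1,p_2)},
\]
which is automatically an isomorphism as a composition of isomorphisms. Evaluating $\Phi$ on an elementary generator yields $\Phi(\eta(z)\omega(z\tau)) = \vartheta_k(\omega\otimes\eta;p_2\tau)$, exactly matching the prescription in the corollary. The only subtle point worth addressing is well-definedness: an element of $J_{p_1,p_2,k}^0$ can a priori be written as a finite sum of products $\eta_i(z)\omega_i(z\tau)$ in several ways, but the injectivity half of Proposition \ref{prop:injective} eliminates any such ambiguity, so $\Phi$ is unambiguously determined on generators and extends linearly. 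I anticipate no genuine obstacle; the corollary is essentially a bookkeeping statement recording that the isomorphisms of Propositions \ref{prop:injective} and \ref{prop:prime-injective} can be chained through the common intermediate space $(W_{p_1}^0\otimes W_{p_2}^0)_k$ to give a direct identification between weak product functions of prime level and the corresponding Eisenstein space.
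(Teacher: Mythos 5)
Your argument is exactly the paper's: the corollary is proved by composing the isomorphism $\Xi_{p_1,p_2,k}^{-1}$ from Proposition \ref{prop:injective} with the isomorphism of Proposition \ref{prop:prime-injective}, and your remark on well-definedness via injectivity is a correct (if implicit in the paper) bookkeeping point. No issues.
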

\begin{proof} Immediate with Propositions \ref{prop:prime-injective} and \ref{prop:injective}.
\end{proof}

The following is a simple, but useful observation. 
\begin{prop} \label{prop:order-equality} Let $\mathcal{S} \subset \{0, \ldots, M-3\} \times  \{0, \ldots, N-3\}$ be non-empty and $(a_{c,d})_{(c,d) \in \mathcal{S}}$ be a family of non-zero complex numbers. Then we have the formula
	\begin{align*}
	\ord\left( \sum_{(c,d) \in \mathcal{S}} a_{c,d} \alpha_d^{(N)}(z)\alpha_{c}^{(M)}(z\tau)\right) = \left(\min_{(c,d) \in \mathcal{S}} c, \min_{(c,d) \in \mathcal{S}} d\right).
	\end{align*}
\end{prop}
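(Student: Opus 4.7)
My plan is to split the argument into a routine lower bound, coming from the algebra of $\mathfrak{L}$, and a sharper upper bound that pins down one $z^j$-coefficient per component of $\ord(F)$. For the lower bound, Proposition \ref{prop:tensor-analytic} shows that each summand $\alpha_d^{(N)}(z)\alpha_c^{(M)}(z\tau)$ lies in $\C[[z]](\tau)^{(c,d)}$. With the abbreviations $c_0 := \min_{(c,d)\in\mathcal{S}} c$, $d_0 := \min_{(c,d)\in\mathcal{S}} d$, and $F(z,\tau) := \sum_{(c,d)\in\mathcal{S}} a_{c,d}\, \alpha_d^{(N)}(z)\alpha_c^{(M)}(z\tau)$, the addition rule of Proposition \ref{prop:graded-L} places $F \in \C[[z]](\tau)^{(c_0,d_0)}$, so each component of $\ord(F)$ is bounded below by the claimed value.

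For the upper bound I would expand $F(z,\tau) = \sum_{j \geq 0} P_j(\tau) z^j$ and compute $P_j(\tau)$ explicitly for a well-chosen $j$. By Proposition \ref{prop:alpha-basis}(i), $\alpha_d^{(N)}(z)$ has Taylor coefficient $1$ at $z^d$ and vanishing Taylor coefficient at $z^r$ for $r < d$ or $d < r < N-2$, with the analogous structure for $\alpha_c^{(M)}(z)$. Hence the $\tau^s$-coefficient of the contribution of $(c,d) \in \mathcal{S}$ to $P_j(\tau)$ equals $a_{c,d}$ times the product of the $z^{j-s}$-coefficient of $\alpha_d^{(N)}$ and the $z^s$-coefficient of $\alpha_c^{(M)}$.

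To witness the second component, set $c^* := \max\{c : (c,d_0) \in \mathcal{S}\}$ and $j := c^* + d_0$. Only $(c^*,d_0)$ contributes to the $\tau^{c^*}$-coefficient of $P_j(\tau)$: for $(c,d)$ with $d > d_0$ the $z^{d_0}$-coefficient of $\alpha_d^{(N)}$ vanishes because $d_0 < d$, and for $(c,d_0)$ with $c < c^*$ the $z^{c^*}$-coefficient of $\alpha_c^{(M)}$ vanishes because $c < c^* \leq M-3 < M-2$. The leading coefficient is $a_{c^*,d_0} \neq 0$, so $\deg P_j(\tau) = c^*$ and $j - \deg P_j(\tau) = d_0$. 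The completely symmetric choice $d^* := \max\{d : (c_0,d) \in \mathcal{S}\}$ with $j := c_0 + d^*$ shows that the smallest $\tau$-power in $P_j(\tau)$ has exponent exactly $c_0$, giving $j - \deg(\tau^j P_j(1/\tau)) = c_0$.

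The main obstacle is precisely ruling out cancellation among summands at the extremal $\tau$-power in $P_j(\tau)$. This is where the gaps $d < r < N-2$ and $c < s < M-2$ in the Taylor expansions from Proposition \ref{prop:alpha-basis}(i) are essential: combined with the range restrictions $d_0 \leq N-3$ and $c^* \leq M-3$, they force the extremal coefficient in $P_j(\tau)$ to come from a single pair in $\mathcal{S}$, namely the extremizer selected for that component.
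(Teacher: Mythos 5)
Your proof is correct and follows essentially the same route as the paper: the lower bound comes from Propositions \ref{prop:tensor-analytic} and \ref{prop:graded-L}, and the upper bound from isolating a single non-cancelling monomial coefficient of $P_j(\tau)$ at an extremal index $j$, using the normalized Taylor expansions of Proposition \ref{prop:alpha-basis}(i). The paper packages the coefficient extraction through the monomial decomposition of Proposition \ref{prop:injective} instead of recomputing it directly, but the underlying calculation is the same.
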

\begin{proof} Note that $\ord(\alpha_{n}^{(N)}(z)\alpha_m^{(M)}(z\tau)) = (m,n)$ by Proposition \ref{prop:tensor-analytic}. Applying Proposition \ref{prop:graded-L} multiple times, we conclude 
	\begin{align*}
	\ord\left( \sum_{(c,d) \in \mathcal{S}} a_{c,d} \alpha_d^{(N)}(z)\alpha_{c}^{(M)}(z\tau)\right) \geq \left(\min_{(c,d) \in \mathcal{S}} c, \min_{(c,d) \in \mathcal{S}} d\right)
	\end{align*}
	componentwise. On the other hand, we find with Proposition \ref{prop:injective} 
	\begin{align*}
	\sum_{(c,d) \in \mathcal{S}} a_{c,d} \alpha_d^{(N)}(z)\alpha_{c}^{(M)}(z\tau) = \sum_{\substack{0 \leq c \leq M-3 \\ c \leq d \leq c+N-3 \\ (c,d-c) \in \mathcal{S}}} a_{c,d-c} \tau^c z^d + \sum_{(n,m) \in \N_0^2 \setminus \mathcal{M}} c_{m,n}\tau^m z^n.
	\end{align*}
	In particular, there are non-trivial monomials of the form $a_{\min_{(c,\ell) \in \mathcal{S}} c, d_1 - \min_{(c,\ell) \in \mathcal{S}} c}\tau^{\min_{(c,\ell) \in \mathcal{S}} c} z^{d_1}$ and $a_{c,\min_{(\ell,d) \in \mathcal{S}} d}\tau^{c_1}z^{(\min_{(\ell,d) \in \mathcal{S}} d) + c_1}$, where $\min_{(c,\ell) \in \mathcal{S}} c \leq d_1 \leq \min_{(c,\ell) \in \mathcal{S}} c + N - 3$ and $0 \leq c_1 \leq M-3$ are fixed integers. Writing $\sum_{(c,d) \in \mathcal{S}} a_{c,d} \alpha_d^{(N)}(z)\alpha_{c}^{(M)}(z\tau) = \sum_{j=0}^\infty P_j(\tau)z^j$, we note that
	\begin{align*}
	P_{d_1}(\tau) & = a_{\min_{(c,\ell) \in \mathcal{S}} c, d_1 - \min_{(c,\ell) \in \mathcal{S}} c}\tau^{\min_{(c,\ell) \in \mathcal{S}} c} + \sum_{j=\min_{(c,\ell) \in \mathcal{S}} c + 1}^{d_1} a_j \tau^j, \\
	P_{(\min_{(\ell,d) \in \mathcal{S}} d) + c_1}(\tau) & = a_{c,\min_{(\ell,d) \in \mathcal{S}} d}\tau^{c_1}z^{(\min_{(\ell,d) \in \mathcal{S}} d) + c_1} + \sum_{j=0}^{c_1-1} b_j \tau^j
		\end{align*}
	for some complex $a_j$ and $b_j$, as otherwise the minimality is violated. We conclude, again componentwise, 
	\begin{align*}
	& \left( \min_{j \geq 0} j - \mathrm{deg}\left(\tau^j  P_j\left(\frac{1}{\tau}\right)\right), \min_{j \geq 0} j - \mathrm{deg}(P_j(\tau))  \right) \\
	\leq & \left( d_1 - \deg\left(\tau^{d_1} \left( a_{\min_{(c,\ell) \in \mathcal{S}} c, d_1 - \min_{(c,\ell) \in \mathcal{S}} c}\tau^{-\min_{(c,\ell) \in \mathcal{S}} c} + \sum_{j=\min_{(c,\ell) \in \mathcal{S}} c + 1}^{d_1} a_j \tau^{-j}\right)\right), \right. \\
	& \left. \left(\min_{(\ell,d) \in \mathcal{S}} d\right) + c_1 - \deg\left( a_{c,\min_{(\ell,d) \in \mathcal{S}} d}\tau^{c_1}z^{(\min_{(\ell,d) \in \mathcal{S}} d) + c_1} + \sum_{j=0}^{c_1-1} b_j \tau^j\right)\right) \\
	=& \left(\min_{(c,d) \in \mathcal{S}} c, \min_{(c,d) \in \mathcal{S}} d\right).
	\end{align*}
This proves the claim. 
\end{proof}

We can now define an order on the space $W_M^0 \otimes W_N^0$ by analogy with the previous results. 
\begin{definition} \label{def:oder} Let 
\begin{align*}
0 \not= f := \sum_{\substack{0 \leq m \leq M - 3 \\ 0 \leq n \leq N- 3}} a_{m,n} \left( \alpha_m^{(M)} \otimes \alpha_n^{(N)}\right).
\end{align*}
Then we define
\begin{align*}
\ord(f) := \left( \min \{ 0 \leq m \leq M-3 \ \colon \ \exists \ 0 \leq n \leq N-3 \text{ with } a_{m,n} \not= 0\}, \right. \\
\left. \min \{ 0 \leq n \leq N-3 \ \colon \ \exists \ 0 \leq m \leq M-3 \text{ with } a_{m,n} \not= 0\}\right).
\end{align*}
We also put $\ord(0) := (\infty,\infty)$. 
\end{definition}
Note that if the order is finite in one component, it is also finite in the other one.  
\begin{prop} \label{prop:order-preserv} For all $F \in W_M^0 \otimes W_N^0$, we have $\ord(F) = \ord(\Xi_{M,N}(F))$, where $\Xi_{M,N}$ is the isomorphism in Proposition \ref{prop:injective}.
	\end{prop}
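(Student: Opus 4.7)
The statement is essentially a bookkeeping consequence of the work already done, so my plan is to reduce it directly to Proposition \ref{prop:order-equality}.

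First I would dispatch the trivial case: if $F = 0$, then $\Xi_{M,N}(F) = 0$ by linearity, and by the convention fixed in both Definition \ref{def:ordercomplex} and Definition \ref{def:oder} we have $\ord(0) = (\infty, \infty)$ on either side. So there is nothing to check.

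For $F \neq 0$, I would expand $F$ in the basis from Proposition \ref{prop:alpha-basis}:
\[
F = \sum_{\substack{0 \leq m \leq M-3 \\ 0 \leq n \leq N-3}} a_{m,n}\left(\alpha_m^{(M)} \otimes \alpha_n^{(N)}\right),
\]
and let $\mathcal{S} := \{(m,n) \colon a_{m,n} \neq 0\}$, which is non-empty. Since $\Xi_{M,N}$ is an isomorphism by Proposition \ref{prop:injective}, we also have $\Xi_{M,N}(F) \neq 0$, and by definition
\[
\Xi_{M,N}(F) = \sum_{(m,n) \in \mathcal{S}} a_{m,n}\, \alpha_n^{(N)}(z)\, \alpha_m^{(M)}(z\tau).
\]
On one hand, Definition \ref{def:oder} gives immediately
\[
\ord(F) = \Bigl(\min_{(m,n) \in \mathcal{S}} m,\ \min_{(m,n) \in \mathcal{S}} n\Bigr).
\]
On the other hand, applying Proposition \ref{prop:order-equality} to the subset $\mathcal{S}$ and the non-zero family $(a_{m,n})_{(m,n) \in \mathcal{S}}$ yields exactly
\[
\ord(\Xi_{M,N}(F)) = \Bigl(\min_{(m,n) \in \mathcal{S}} m,\ \min_{(m,n) \in \mathcal{S}} n\Bigr),
\]
and the two sides coincide.

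There is no real obstacle here; the only thing to be careful about is that the index conventions in Proposition \ref{prop:order-equality} and Definition \ref{def:oder} match up (first component controls the $\alpha^{(M)}$-factor, second component controls the $\alpha^{(N)}$-factor). All the substantive analytic work, namely computing the degrees of the polynomial coefficients $P_j(\tau)$ of the relevant power series in $z$, has already been carried out in the proof of Proposition \ref{prop:order-equality} via Proposition \ref{prop:tensor-analytic} and Proposition \ref{prop:graded-L}. Hence the proof of Proposition \ref{prop:order-preserv} reduces to a one-line citation plus the trivial case.
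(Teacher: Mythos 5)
Your proposal is correct and follows exactly the route the paper takes: the paper's proof is the one-line observation that the claim is immediate from Proposition \ref{prop:order-equality} and Definition \ref{def:oder}, which is precisely the reduction you carry out (with the zero case and the index bookkeeping made explicit). Nothing further is needed.
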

\begin{proof} This is immediate with Proposition \ref{prop:order-equality} and Definition \ref{def:oder}. 
	\end{proof}
The following theorem summarizes the above discussion in a useful fact that establishes a simple connection between the abstract order on $W_M^0 \otimes W_N^0$ and the zero order known from complex analysis. 

\begin{thm} \label{thm:iso} Let $0 \leq \ell_1 \leq M-3$ and $0 \leq \ell_2 \leq N-3$. There is an order and sign preserving isomorphism given by
	\begin{align} \label{eq:V-map}
	\Xi_{M,N}^{(\ell_1, \ell_2)} \colon V_{M,N}^{(\ell_1, \ell_2)}  & \overset{\sim}{\longrightarrow} J_{M,N}^0 \cap \C[[z]](\tau)^{(\ell_1, \ell_2)} \\
	\nonumber \omega \otimes \eta & \longmapsto ((z, \tau) \mapsto \eta(z)\omega(z\tau))
	\end{align}
	on elementary tensors between vector spaces induced by \eqref{map}, where $V_{M,N}^{(\ell_1, \ell_2)}$ is defined in \eqref{eq:V-definition}. 
	\end{thm}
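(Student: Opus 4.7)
The plan is to assemble the statement from the preceding machinery; essentially everything needed has already been set up. First, to verify well-definedness of $\Xi_{M,N}^{(\ell_1,\ell_2)}$, I would use the basis description \eqref{eq:Vbasis} to write any $F \in V_{M,N}^{(\ell_1,\ell_2)}$ as a finite sum $\sum_{c \geq \ell_1,\, d \geq \ell_2} a_{c,d}(\alpha_c^{(M)} \otimes \alpha_d^{(N)})$. Each elementary tensor is sent to $\alpha_d^{(N)}(z)\alpha_c^{(M)}(z\tau)$, which by Proposition \ref{prop:tensor-analytic} lies in $\C[[z]](\tau)^{(c,d)}$; the embeddings $\C[[z]](\tau)^{(c,d)} \hookrightarrow \C[[z]](\tau)^{(\ell_1,\ell_2)}$ noted after Proposition \ref{prop:graded-L}, together with closure under addition from the same proposition, place the entire image in $\C[[z]](\tau)^{(\ell_1,\ell_2)}$. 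Membership in $J_{M,N}^0$ is by construction.

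Injectivity is then automatic because $\Xi_{M,N}^{(\ell_1,\ell_2)}$ is just the restriction of the isomorphism $\Xi_{M,N}$ of Proposition \ref{prop:injective}. Surjectivity is the only non-formal step, but it follows directly from Proposition \ref{prop:order-equality}: given $F \in J_{M,N}^0 \cap \C[[z]](\tau)^{(\ell_1,\ell_2)}$, I would pull it back to a unique $G = \sum a_{c,d}(\alpha_c^{(M)} \otimes \alpha_d^{(N)}) \in W_M^0 \otimes W_N^0$ via Proposition \ref{prop:injective}. Proposition \ref{prop:order-equality} then gives $\ord(F) = (\min c,\min d)$ taken over the support of $a_{c,d}$, while the hypothesis $F \in \C[[z]](\tau)^{(\ell_1,\ell_2)}$ forces $\ord(F) \geq (\ell_1,\ell_2)$ componentwise. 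Thus $a_{c,d} = 0$ whenever $c < \ell_1$ or $d < \ell_2$, so $G$ lies in $V_{M,N}^{(\ell_1,\ell_2)}$ by \eqref{eq:Vbasis}.

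For order preservation I would simply cite Proposition \ref{prop:order-preserv}. For sign preservation it suffices to observe that if $\omega(-z) = \sgn(\omega)\,\omega(z)$ and $\eta(-z) = \sgn(\eta)\,\eta(z)$, then
\[
\eta(-z)\omega(-z\tau) = \sgn(\omega)\sgn(\eta)\,\eta(z)\omega(z\tau),
\]
so that the parity of the image function in the variable $z$ equals $\sgn(\omega \otimes \eta)$; this extends by linearity on the graded pieces $(W_M^0 \otimes W_N^0)_k$. I anticipate no real obstacle, since the theorem is a packaging step: the only substantive input is the leading-term analysis already performed in Proposition \ref{prop:order-equality}, and the main thing to be careful about is that the embedding noted after Proposition \ref{prop:graded-L} is invoked correctly when passing from the ``$(c,d)$-graded pieces'' to the fixed target $\C[[z]](\tau)^{(\ell_1,\ell_2)}$.
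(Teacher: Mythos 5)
Your proposal is correct and follows essentially the same route as the paper's (much terser) proof, which likewise reduces everything to Propositions \ref{prop:order-equality} and \ref{prop:order-preserv} together with the observation that $J_{M,N}^0 \cap \C[[z]](\tau)^{(\ell_1,\ell_2)}$ is exactly the set of $f \in J_{M,N}^0$ with $\ord(f) \geq (\ell_1,\ell_2)$. You simply spell out the well-definedness, injectivity, surjectivity and sign-preservation steps that the paper leaves implicit.
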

\begin{proof} It is clear that the map is sign preserving. By construction and Definition \ref{def:power-series-orders} the space $J_{M,N}^0 \cap \C[[z]](\tau)^{(\ell_1, \ell_2)}$ contains all functions $(z,\tau) \mapsto f(z,\tau) \in J_{M,N}^0 $ staisfying $\ord(f) \geq (\ell_1, \ell_2)$. Now the claim is immediate with Propositions and \ref{prop:order-equality} and \ref{prop:order-preserv}.
	\end{proof}

For our investigations, we still lack a means to infer vanishing coefficients of polynomials in the power series expansion in $\alpha_n^{(N)}(z)\alpha_m^{(M)}(z\tau)$ from properties of the basis vectors $\alpha_m^{(M)} \otimes \alpha_n^{(N)}$ for $W_M^0 \otimes W_N^0$. Let $T \geq 0$ be an integer. For the polynomial spaces
\begin{align*}
\C[\tau]^{T \geq \deg} := \{ P \in \C[\tau] \colon \deg(P) \leq T\}
\end{align*} 
and subsets $\mathcal{S} \subset \{0,...,T\}$ we consider linear maps 
\begin{align*}
\mathrm{Coeff}_{\mathcal{S}, T} \colon \C[z]^{T \geq \deg} & \longrightarrow \C^{|\mathcal{S}|} \\
\sum_{j=0}^T a_j \tau^j  & \longmapsto (a_j)_{j \in \mathcal{S}}.
\end{align*}
To extract the polynomials $P_j(\tau)$ in the power series expansions of $\eta(z)\omega(z\tau)$, we can use the residue map
\begin{align*}
\mathrm{Res}_T \colon J^0_{M,N} &\longrightarrow \C[\tau]^{T \geq \deg},  \\
\eta(z)\omega(z\tau) & \longmapsto \res_{z=0}\left( z^{-(T+1)} \eta(z) \omega(z\tau)\right).
\end{align*}
We can prove the following.
\begin{thm} \label{thm:exactsequence} Let $M$, $N$ and $T$ be integers. Let $\ell_1$ and $\ell_2$ be integers such that $0 \leq \ell_1 \leq M-2$, $0 \leq \ell_2 \leq N-2$, and $\ell_1 + \ell_2 \leq T+1$. Choose 
	\begin{align*}
	\mathcal{S} \subset \{0, 1, ..., \ell_1-1\} \cup \{T+1-\ell_2, T+2-\ell_2, \ldots, T\} \subset \{0, \ldots T\}.
	\end{align*}
	Then we have 
	\begin{align} \label{eq:kernel-sub}
	\bigoplus_{\substack{\left( m \notin \mathcal{S} \wedge T-n \notin \mathcal{S}\right) \\ \vee \left( (m,T-n) \in \mathcal{S}^2 \wedge  m \not= T-n \right)}} \C \left( \alpha_m^{(M)} \otimes \alpha_n^{(N)}\right) \subset \ker\left( \mathrm{Coeff}_{\mathcal{S}, T} \circ \mathrm{Res}_T \circ \Xi_{M,N}\right).
	\end{align}
	If additionally $T \leq \min\{M-3,N-3\}$, then, for any $\mathcal{S} \subset \{0,1,\ldots,T\}$, we have the exact sequence 
	\begin{align} \label{eq:exactseq}
	0 \longrightarrow \bigoplus_{m \notin \mathcal{S} \vee m \not= T-n} \C \left( \alpha_m^{(M)} \otimes \alpha_n^{(N)}\right) \overset{\Xi_{M,N}}{\longrightarrow} J_{M,N}^0 \overset{\mathrm{Coeff}_{\mathcal{S}, T} \circ \mathrm{Res}_T }{\longrightarrow }\C^{|\mathcal{S}|} \longrightarrow 0.
	\end{align}
\end{thm}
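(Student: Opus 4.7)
The plan is to evaluate the composition $\mathrm{Coeff}_{\mathcal{S},T}\circ\mathrm{Res}_T\circ\Xi_{M,N}$ on each basis element $\alpha_c^{(M)}\otimes\alpha_d^{(N)}$ of $W_M^0\otimes W_N^0$ and then read off the kernel. Writing the Taylor expansions $\alpha_c^{(M)}(w)=\sum_j a_j^{(c)}w^j$ and $\alpha_d^{(N)}(z)=\sum_j b_j^{(d)}z^j$, the Cauchy product yields
\begin{align*}
\alpha_d^{(N)}(z)\,\alpha_c^{(M)}(z\tau)=\sum_{j\geq 0} P_j^{(c,d)}(\tau)\,z^j,\qquad [\tau^s]\,P_T^{(c,d)}(\tau)=a_s^{(c)}\,b_{T-s}^{(d)}.
\end{align*}
The crucial structural input is Proposition~\ref{prop:alpha-basis}~(i): one has $a_j^{(c)}=\delta_{j,c}$ for $j\leq M-3$ and $b_j^{(d)}=\delta_{j,d}$ for $j\leq N-3$, while higher-order ``junk'' corrections only appear from $w^{M-2}$ and $z^{N-2}$ onward.

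For the inclusion \eqref{eq:kernel-sub} I would split $\mathcal{S}=\mathcal{S}_1\sqcup\mathcal{S}_2$ with $\mathcal{S}_1\subseteq\{0,\ldots,\ell_1-1\}$ and $\mathcal{S}_2\subseteq\{T+1-\ell_2,\ldots,T\}$, the two blocks being disjoint since $\ell_1+\ell_2\leq T+1$. For $s\in\mathcal{S}_1$ one has $s\leq\ell_1-1\leq M-3$, hence $a_s^{(m)}=\delta_{s,m}$; for $s\in\mathcal{S}_2$ one has $T-s\leq\ell_2-1\leq N-3$, hence $b_{T-s}^{(n)}=\delta_{T-s,n}$. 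A case-by-case check of the LHS condition then closes the inclusion. In Case~A ($m\notin\mathcal{S}$ and $T-n\notin\mathcal{S}$) every product $a_s^{(m)}b_{T-s}^{(n)}$ collapses to $0$ via one of the two delta reductions. In Case~B ($m,T-n\in\mathcal{S}$, $m\neq T-n$) the only potentially nonzero contribution would occur at $s=m\in\mathcal{S}_1$ (where $a_s^{(m)}=1$) or at $s=T-n\in\mathcal{S}_2$ (where $b_{T-s}^{(n)}=1$), and in either case the partnering factor reduces through its own delta to zero precisely because $m\neq T-n$.

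For the exact sequence \eqref{eq:exactseq}, the additional assumption $T\leq\min\{M-3,N-3\}$ eliminates the junk: by Proposition~\ref{prop:injective} all correction terms live at $z$-powers at least $c+N-2$ or $M-2+d$, both strictly larger than $T$. Hence
\begin{align*}
P_T^{(c,d)}(\tau)=\delta_{T,c+d}\,\tau^c,\qquad [\tau^s]\,P_T^{(c,d)}(\tau)=\delta_{s,c}\,\delta_{T-s,d},
\end{align*}
so each basis element is sent to the $s$-th standard basis vector of $\C^{|\mathcal{S}|}$ exactly when $(c,d)=(s,T-s)$ for some $s\in\mathcal{S}$ and to zero otherwise. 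This identifies the kernel as $\bigoplus_{m\notin\mathcal{S}\,\vee\,m+n\neq T}\C(\alpha_m^{(M)}\otimes\alpha_n^{(N)})$, agreeing with the first term in the sequence. Surjectivity is witnessed by the explicit preimages $\alpha_s^{(M)}\otimes\alpha_{T-s}^{(N)}$, valid because $0\leq s\leq T\leq\min\{M-3,N-3\}$ forces $s\leq M-3$ and $T-s\leq N-3$. Exactness at $J_{M,N}^0$ is then immediate from $\Xi_{M,N}$ being an isomorphism (Proposition~\ref{prop:injective}).

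The main obstacle lies in the junk-term analysis for \eqref{eq:kernel-sub}: when $T$ exceeds $\min\{M-3,N-3\}$ the higher Taylor coefficients of the $\alpha$-basis can in principle feed into $[\tau^s]\,P_T^{(c,d)}(\tau)$, and one must exploit the boundary structure of $\mathcal{S}$ together with the non-diagonality condition in Case~B to ensure that at least one of the two factors $a_s^{(m)},b_{T-s}^{(n)}$ always lands in the controlled Kronecker-delta regime. The exact sequence \eqref{eq:exactseq}, by contrast, is essentially formal linear algebra once Proposition~\ref{prop:injective}'s clean product formula is in hand.
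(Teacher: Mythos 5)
Your strategy is the paper's own: expand $\alpha_n^{(N)}(z)\alpha_m^{(M)}(z\tau)$ by the Cauchy product, so that the $\tau^s$-coefficient of the $z^T$-term is $a^{(M)}_{m,s}a^{(N)}_{n,T-s}$, and exploit Proposition \ref{prop:alpha-basis} in the form $a^{(M)}_{m,s}=\delta_{m,s}$ for $s\le M-3$. Your Case A, the identification of the kernel under the extra hypothesis $T\le\min\{M-3,N-3\}$, and the surjectivity argument via the preimages $\alpha^{(M)}_s\otimes\alpha^{(N)}_{T-s}$ all coincide with the paper's proof and are correct.

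The gap is Case B of \eqref{eq:kernel-sub}, which you locate precisely in your closing paragraph but do not close. Writing $\mathcal{S}_1:=\mathcal{S}\cap\{0,\ldots,\ell_1-1\}$, the surviving contribution at $s=m\in\mathcal{S}_1$ is $1\cdot a^{(N)}_{n,T-m}$, and this factor ``reduces through its own delta'' only when $T-m\le N-3$. The first part of the theorem imposes no upper bound on $T$, so this can fail (take $m=0$ and $T$ large), and then $a^{(N)}_{n,T-m}$ is one of the uncontrolled Taylor coefficients of $\alpha^{(N)}_n$ beyond the range governed by Proposition \ref{prop:alpha-basis}. For instance, with $M=N=5$, $T=10$, $\ell_1=\ell_2=2$, $\mathcal{S}=\{0,1,9,10\}$ and $(m,n)=(0,0)$, the pair lies in the Case B summand since $(0,10)\in\mathcal{S}^2$ and $0\ne 10$, yet the $s=0$ component of the image equals $a^{(5)}_{0,0}a^{(5)}_{0,10}=a^{(5)}_{0,10}$, the tenth Taylor coefficient of $\alpha^{(5)}_0$, which nothing in the construction forces to vanish. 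So the ``junk-term analysis'' you defer is not a technicality that the boundary structure of $\mathcal{S}$ resolves; Case B appears to need an additional hypothesis of the type $T\le\min\{M-3,N-3\}$, under which it is anyway subsumed by \eqref{eq:exactseq}. You are in good company: the paper's own proof asserts $a^{(M)}_{m,\ell}a^{(N)}_{n,T-\ell}=\delta_{m,\ell}\delta_{n,T-\ell}$ at exactly this point, silently assuming both $\ell\le M-3$ and $T-\ell\le N-3$. Note finally that the sole use of this theorem downstream (Corollary \ref{cor:exactsequence-k} feeding Theorem \ref{thm:L-strips}) invokes only Case A, so the applications survive.
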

\begin{proof} During the proof we write
	\begin{align*}
	\alpha_{m}^{(M)}(z) = \sum_{\ell=0}^\infty a_{m,\ell}^{(M)}z^\ell = \sum_{\ell=0}^{M-3}\delta_{m,\ell} z^\ell + O\left(z^{M-2}\right)
	\end{align*}
	where the last equality follows from Proposition \ref{prop:alpha-basis}. We first show \eqref{eq:kernel-sub}. Recall that
	\begin{align*}
    \Xi_{M,N}\left( \alpha_m^{(M)} \otimes \alpha_n^{(N)}\right) = \alpha_n^{(N)}(z)\alpha_m^{(M)}(z\tau) = \sum_{\ell=0}^T a_{m,\ell}^{(M)} a_{n,T-\ell}^{(N)} \tau^\ell.
	\end{align*}
	Applying $\mathrm{Coeff}_{\mathcal{S}, T} \circ \mathrm{Res}_T$ this equals
	\begin{align*}
	\left( a_{m,\ell}^{(M)} a_{n,T-\ell}^{(N)} \right)_{\ell \in \mathcal{S}}.
	\end{align*}
	If we assume $m \notin \mathcal{S}$, we find for all $0 \leq \ell \leq \ell_1 - 1 $ and $\ell \in \mathcal{S}$ that $a_{m,\ell}^{(M)} a_{n,T-\ell}^{(N)} = \delta_{m,\ell} a_{n,T-\ell}^{(N)} = 0$ (note that $\ell_1 -1 \leq M-3$ by assumption). Likewise, assuming $T - n \notin \mathcal{S}$, we obtain for all $\ell \in \mathcal{S}$ with $T + 1 - \ell_2 \leq \ell \leq T$ the equality $a_{m,\ell}^{(M)} a_{n,T-\ell}^{(N)} = a_{m,\ell}^{(M)} \delta_{n,T-\ell} = 0$, since $n = T - \ell$ implies $\ell = T - n \in \mathcal{S}$, which contradicts our assumption. 
	Now assume $(m,T-n) \in \mathcal{S}^2$ and $m \not= T-n$. In this case we find $a_{m,\ell}^{(M)} a_{n,T-\ell}^{(N)} = \delta_{m,\ell} \delta_{n,T-\ell} = 0$. This proves \eqref{eq:kernel-sub}. \\
	Next we show that for all $\mathcal{S} \subset \{0,1,\ldots, T\}$ the sequence \eqref{eq:exactseq} is exact if we additionally assume $T \leq \min\{M-3, N-3\}$. Note that the resctriction of $\Xi_{M,N}$ is injective by Proposition \ref{prop:injective}. Under the restriction $T \leq \min\{M-3, N-3\}$ we also obtain 
	\begin{align*}
	\left( a_{m,\ell}^{(M)} a_{n,T-\ell}^{(N)}\right)_{\ell \in \mathcal{S}} = \left( \delta_{m,\ell} \delta_{n,T-\ell}\right)_{\ell \in \mathcal{S}}.
	\end{align*}
	For arbitrary $\ell \in \mathcal{S}$, note that this is not the zero vector if and only if there is some $\ell \in \mathcal{S}$ such that $\ell = m$ and $T - \ell = n$, i.e., $m = T-n$ and $m \in \mathcal{S}$. Consequently, it is zero if and only if $m \notin \mathcal{S}$ or $T - m \not= n$. This proves by linear independency of the non-zero components 
	\begin{align*}
	\Xi_{M,N}\left( \bigoplus_{m \notin \mathcal{S} \vee m \not= T-n} \C \left( \alpha_m^{(M)} \otimes \alpha_n^{(N)}\right) \right) = \ker\left( \mathrm{Coeff}_{\mathcal{S}, T} \circ \mathrm{Res}_T  \right).
	\end{align*}
	Finally, for any $(\lambda_\ell)_{\ell \in \mathcal{S}} \in \C^{|\mathcal{S}|}$ we can find the pre-image $\sum_{\ell \in \mathcal{S}} \lambda_\ell \alpha_{T-\ell}^{(N)}(z)\alpha^{(M)}_{\ell}(z\tau) \in J_{M,N}^0$ which proves that $\mathrm{Coeff}_{\mathcal{S}, T} \circ \mathrm{Res}_T \colon J_{M,N}^0 \to \C^{|\mathcal{S}|}$ is onto. The theorem is proved.
	\end{proof}
In the context of modular forms we need the above theorem with respect to a fixed weight $k$. As we have introduced the spaces $(W_M^0 \otimes W_N^0)_k$ in order to eleminate trivial canceling when going from rational functions to modular forms, we should enure that Theorem \ref{thm:exactsequence} still works ``when restricting to $(W_M^0 \otimes W_N^0)_k$''. 
\begin{cor} \label{cor:exactsequence-k} Let $M$, $N$ and $T$ be integers. Let $\ell_1$ and $\ell_2$ be integers such that $0 \leq \ell_1 \leq M-2$, $0 \leq \ell_2 \leq N-2$, and $\ell_1 + \ell_2 \leq T+1$. Choose 
	\begin{align*}
	\mathcal{S} \subset \{0, 1, ..., \ell_1-1\} \cup \{T+1-\ell_2, T+2-\ell_2, \ldots, T\} \subset \{0, \ldots T\}.
	\end{align*}
	Then we have 
	\begin{align*} 
	\bigoplus_{\substack{\left( m \notin \mathcal{S} \wedge T-n \notin \mathcal{S}\right) \\ \vee \left( (m,T-n) \in \mathcal{S}^2 \wedge  m \not= T-n \right) \\ m+n \equiv T \pmod{2}}} \C \left( \alpha_m^{(M)} \otimes \alpha_n^{(N)}\right) \subset \ker\left( \mathrm{Coeff}_{\mathcal{S}, T} \circ \mathrm{Res}_T \circ \Xi_{M,N,T}\right).
	\end{align*}
	If additionally $T \leq \min\{M-3,N-3\}$, then, for any $\mathcal{S} \subset \{0,1,\ldots,T\}$, we have the exact sequence 
	\begin{align*} 
	0 \longrightarrow \bigoplus_{\substack{m \notin \mathcal{S} \vee m \not= T-n \\ m+n \equiv T \pmod{2}}} \C \left( \alpha_m^{(M)} \otimes \alpha_n^{(N)}\right) \overset{\Xi_{M,N,T}}{\longrightarrow} J_{M,N,T}^0 \overset{\mathrm{Coeff}_{\mathcal{S}, T} \circ \mathrm{Res}_T }{\longrightarrow }\C^{|\mathcal{S}|} \longrightarrow 0.
	\end{align*}
	\end{cor}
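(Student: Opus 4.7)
My plan is to reduce Corollary \ref{cor:exactsequence-k} to Theorem \ref{thm:exactsequence} by tracking the parity grading. By Proposition \ref{prop:alpha-basis}(ii), the basis vector $\alpha_j^{(M)}$ has sign $(-1)^j$, and analogously for $\alpha_j^{(N)}$. Consequently, $\sgn(\alpha_m^{(M)} \otimes \alpha_n^{(N)}) = (-1)^{m+n}$, so the space $(W_M^0 \otimes W_N^0)_T$ is precisely the span of those basis tensors with $m + n \equiv T \pmod{2}$. In particular, the family $\{\alpha_m^{(M)} \otimes \alpha_n^{(N)}\}_{m+n \equiv T \, (2)}$ forms a basis of $(W_M^0 \otimes W_N^0)_T$, and $\Xi_{M,N,T}$ is the restriction of $\Xi_{M,N}$ to this subspace.

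For the first inclusion, the plan is to intersect the inclusion \eqref{eq:kernel-sub} from Theorem \ref{thm:exactsequence} with the parity-restricted subspace $(W_M^0 \otimes W_N^0)_T$. Since the family of elementary tensors indexed by the condition in \eqref{eq:kernel-sub} splits cleanly according to the parity $m+n \bmod 2$, intersecting with the parity constraint simply adds the condition $m + n \equiv T \pmod{2}$ to the indexing set, yielding exactly the direct sum stated in the corollary. Since $\mathrm{Coeff}_{\mathcal{S},T} \circ \mathrm{Res}_T \circ \Xi_{M,N,T}$ is the restriction of the corresponding map from Theorem \ref{thm:exactsequence}, the inclusion into its kernel follows.

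For the exact sequence under the additional hypothesis $T \leq \min\{M-3, N-3\}$, I proceed in three steps. First, injectivity of $\Xi_{M,N,T}$ is inherited from injectivity of $\Xi_{M,N}$ (Proposition \ref{prop:injective}). Second, the kernel identification proceeds exactly as in Theorem \ref{thm:exactsequence}, where under the assumption $T \leq \min\{M-3, N-3\}$ the coefficient vector computes to $(\delta_{m,\ell}\delta_{n,T-\ell})_{\ell \in \mathcal{S}}$; the same calculation, restricted to basis tensors with $m + n \equiv T \pmod{2}$, identifies the kernel of $\mathrm{Coeff}_{\mathcal{S},T} \circ \mathrm{Res}_T$ restricted to $J_{M,N,T}^0$ with the direct sum in the displayed sequence. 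Third — and this is the point I want to check most carefully — the surjectivity of $\mathrm{Coeff}_{\mathcal{S},T} \circ \mathrm{Res}_T \colon J_{M,N,T}^0 \to \C^{|\mathcal{S}|}$ requires that the explicit pre-image $\sum_{\ell \in \mathcal{S}} \lambda_\ell \, \alpha_{T-\ell}^{(N)}(z) \alpha_\ell^{(M)}(z\tau)$ used in the proof of Theorem \ref{thm:exactsequence} actually lies in $J_{M,N,T}^0$. This is automatic: each summand corresponds to the tensor $\alpha_\ell^{(M)} \otimes \alpha_{T-\ell}^{(N)}$, whose index parities satisfy $\ell + (T-\ell) = T$, so the pre-image lies in the parity-restricted subspace $(W_M^0 \otimes W_N^0)_T$ and hence in $J_{M,N,T}^0$.

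The main (small) obstacle is simply the bookkeeping to confirm that the three building blocks of Theorem \ref{thm:exactsequence} — injectivity, kernel computation, and surjective pre-image — are all compatible with the parity grading. Once these compatibilities are recorded, the corollary follows immediately.
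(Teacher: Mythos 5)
Your proposal is correct and follows essentially the same route as the paper: restrict Theorem \ref{thm:exactsequence} to the parity subspace using that $\alpha_m^{(M)} \otimes \alpha_n^{(N)} \in (W_M^0 \otimes W_N^0)_T$ iff $m+n \equiv T \pmod{2}$, and observe that the explicit pre-images $\sum_{\ell \in \mathcal{S}} \lambda_\ell\, \alpha_{T-\ell}^{(N)}(z)\alpha_\ell^{(M)}(z\tau)$ already lie in $J_{M,N,T}^0$ since $\ell + (T-\ell) = T$. Your write-up just spells out the bookkeeping that the paper leaves implicit.
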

\begin{proof} As we have $\alpha_m^{(M)} \otimes \alpha_{n}^{(N)} \in (W_M^0 \otimes W_N^0)_T$ if and only if $m+n \equiv T \pmod{2}$, the first claim follows directly with Theorem \ref{thm:exactsequence}. A similar observation holds for the exact sequence, as the pre-images of $\mathrm{Coeff}_{\mathcal{S}, T} \circ \mathrm{Res}_T $ are part of the subspace $J_{M,N,T}^0$.
	\end{proof}
\section{Applications to $L$-series of Eisenstein series}

Fix a weight $k \geq 3$. Depending on that $k$ is odd or even, we obtain different kernels of the map 
\begin{align*}
\omega \otimes \eta \longmapsto \vt_k(\omega \otimes \eta; \tau) := -2\pi i\sum_{x \in \Q^\times} \res_{z=x} \left( z^{k-1} \eta(z) \omega(z\tau)\right).
\end{align*}
So we only focus on elements with the right sign. A critical tool for the main proofs is Eichler duality, so Fourier transforms will play a significant role. For the sake of clarity and the convenience of the reader, we work these out very explicitly. First, we put $M=p_1$ and $N=p_2$ with odd prime numbers $p_1$ and $p_2$, and write the elements $\alpha_c^{(p_1)}$ and $\alpha_d^{(p_2)}$ (with $0 \leq c \leq p_1-2$, $0 \leq d \leq p_2-2$) as linear combinations in the $\omega_\chi$:
\begin{align} \label{eq:original-alpha}
\alpha_c^{(p_1)} = \sum_{\chi \in \mathcal{C}_0^{\mathrm{prim}}(p_1)} a_{\chi}(c) \omega_\chi, \qquad \alpha_d^{(p_2)} = \sum_{\psi \in \mathcal{C}_0^{\mathrm{prim}}(p_2)} a_{\psi}(d) \omega_\psi.
\end{align}
Note that this is possible as for primes all non-principal characters are primitive. Corresponding to this, we now consider the change of basis $\alpha_c^{(p_1)} \otimes \alpha_d^{(p_2)} \mapsto \widetilde{\alpha}_c^{(p_1)} \otimes \widehat{\alpha}_d^{(p_2)}$ defined by
\begin{align} \label{eq:dual-alpha}
\widetilde{\alpha}_c^{(p_1)} := \sum_{\chi \in \mathcal{C}_0^{\mathrm{prim}}(p_1)} \chi(-1) \cG(\chi) a_{\chi}(c) \omega_{\overline{\chi}}, \qquad \widehat{\alpha}_d^{(p_2)} := \sum_{\psi \in \mathcal{C}_0^{\mathrm{prim}}(p_2)} \cG(\psi) a_{\psi}(d) \omega_{\overline{\psi}},
\end{align}
where $\mathcal{G}(\chi)$, $\mathcal{G}(\psi)$ are the Gauss sums of $\chi$, $\psi$ and the numbers $a_{\chi}(c)$ and $a_{\psi}(d)$ are defined in \eqref{eq:original-alpha}. We can show that these elements again give a basis.
\begin{lemma} \label{lemma:basis-change} The elements in \eqref{eq:dual-alpha} give a basis of $W_{p_1}^0$ and $W_{p_2}^0$, respectively. 
	\end{lemma}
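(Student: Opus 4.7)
The plan is to express the coefficient matrix of the new family in the basis $(\omega_\chi)_{\chi \in \mathcal{C}_0^{\mathrm{prim}}(p_1)}$ provided by Proposition \ref{prop:CharacterBasis}, and to decompose it as a product of three manifestly invertible factors. The same argument works verbatim for $\widehat{\alpha}_d^{(p_2)}$, so we treat only the $p_1$-side.

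By Proposition \ref{prop:CharacterBasis}, $(\omega_\chi)_{\chi \in \mathcal{C}_0^{\mathrm{prim}}(p_1)}$ is a basis of $W_{p_1}^0$, and by Proposition \ref{prop:alpha-basis} the family $(\alpha_c^{(p_1)})_{0 \leq c \leq p_1 - 2}$ is also a basis. Hence the matrix $A := (a_\chi(c))_{\chi,c}$ appearing in \eqref{eq:original-alpha} is invertible. Now, using that $\chi \mapsto \overline{\chi}$ is an involution on $\mathcal{C}_0^{\mathrm{prim}}(p_1)$ and that $\chi(-1) = \overline{\chi}(-1) \in \{\pm 1\}$, one rewrites
\begin{align*}
\widetilde{\alpha}_c^{(p_1)} = \sum_{\psi \in \mathcal{C}_0^{\mathrm{prim}}(p_1)} \psi(-1)\, \mathcal{G}(\overline{\psi})\, a_{\overline{\psi}}(c) \, \omega_{\psi},
\end{align*}
so that the coefficient matrix $B := (\psi(-1) \mathcal{G}(\overline{\psi}) a_{\overline{\psi}}(c))_{\psi,c}$ of the $\widetilde{\alpha}_c^{(p_1)}$ in the basis $(\omega_\psi)$ admits the factorization $B = D \cdot P \cdot A$, where $P$ is the permutation matrix induced by $\chi \mapsto \overline{\chi}$ and $D$ is the diagonal matrix with entries $\psi(-1)\mathcal{G}(\overline{\psi})$.

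The matrix $P$ is invertible since it is a permutation; the matrix $D$ is invertible because $|\mathcal{G}(\overline{\psi})|^2 = p_1 \neq 0$ for every primitive character $\psi$ modulo $p_1$, and $\psi(-1) \in \{\pm 1\}$; finally $A$ is invertible by the basis property of the $\alpha_c^{(p_1)}$. Hence $B$ is invertible, which means that $(\widetilde{\alpha}_c^{(p_1)})_{0 \leq c \leq p_1-2}$ is a basis of $W_{p_1}^0$. The analogous argument, with the diagonal factor having entries $\mathcal{G}(\overline{\psi})$ (no sign, matching \eqref{eq:dual-alpha}), shows that $(\widehat{\alpha}_d^{(p_2)})_{0 \leq d \leq p_2-2}$ is a basis of $W_{p_2}^0$.

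The only point that requires a small verification is the substitution $\chi \leftrightarrow \overline{\chi}$ in the sum defining $\widetilde{\alpha}_c^{(p_1)}$, but this is a purely formal relabelling on a finite index set, so no genuine obstacle arises.
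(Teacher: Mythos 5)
Your proof is correct and follows essentially the same route as the paper: both arguments factor the coefficient matrix of the $\widetilde{\alpha}_c^{(p_1)}$ (in the basis $(\omega_\chi)$ of Proposition \ref{prop:CharacterBasis}) into a product of the invertible change-of-basis matrix $A$, a diagonal matrix of nonzero Gauss-sum factors, and the permutation induced by $\chi \mapsto \overline{\chi}$. The only cosmetic difference is that you relabel the summation index first and write the factorization as $DPA$ rather than $ABS$, which changes nothing of substance.
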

\begin{proof} We only give a proof for $p_1$ as the case $p_2$ is similar. Write $\{\chi_1, ..., \chi_{p_1-2}\}$ for the non-principal characters modulo $p_1$. Put
	\begin{align*}
	A & := \begin{pmatrix} a_{\chi_1}(0) & a_{\chi_2}(0) & \cdots & a_{\chi_{p_1-2}}(0) \\ a_{\chi_1}(1) & a_{\chi_2}(1) & \cdots & a_{\chi_{p_1-2}}(1) \\ \vdots & \ddots & \vdots & \vdots \\ a_{\chi_1}(p_1-3) & a_{\chi_2}(p_1-3) & \cdots & a_{\chi_{p_1-2}}(p_1-3) \end{pmatrix}, \\
	B & := \begin{pmatrix} \chi_1(-1) \mathcal{G}(\chi_1) & 0& \cdots & 0\\ 0& \chi_2(-1)\mathcal{G}(\chi_2) & \cdots & 0 \\ \vdots & \ddots & \vdots & \vdots \\ 0 & 0 & \cdots & \chi_{p_1-2}(-1)\mathcal{G}(\chi_{p_1-2}). \end{pmatrix}.
	\end{align*}
Note that \eqref{eq:original-alpha} implies 
\begin{align*}
A \begin{pmatrix} \omega_{\chi_1} \\ \omega_{\chi_2} \\ \vdots \\ \omega_{\chi_{p_1-2}}\end{pmatrix} =  \begin{pmatrix} \alpha_0^{(p_1)} \\ \alpha_1^{(p_1)} \\ \vdots \\ \alpha_{p_1-3}^{(p_1)}\end{pmatrix}.
\end{align*}
Since this is a change of basis, $A$ is regular. The same holds for $B$, as all involved characters are primitive. Let $S$ be the swapping matrix sending $(\omega_{\chi})$ to $(\omega_{\overline{\chi}})$. With \eqref{eq:dual-alpha} we conclude that
\begin{align*}
ABS \begin{pmatrix} \omega_{\chi_1} \\ \omega_{\chi_2} \\ \vdots \\ \omega_{\chi_{p_1-2}}\end{pmatrix} = \begin{pmatrix} \widetilde{\alpha}_0^{(p_1)} \\ \widetilde{\alpha}_1^{(p_1)} \\ \vdots \\ \widetilde{\alpha}_{p_1-3}^{(p_1)}\end{pmatrix}.
\end{align*}
Since $ABS$ is regular, this is again a change of basis, and the claim follows. 
\end{proof}
\begin{bsp} \label{bsp:Basiswechsel} Let $\chi_5$ be the Dirichlet character modulo $5$ satisfying $\chi_5(2) = i$. We then have with Example \ref{bsp:Basis}
	\begin{align*}
	\alpha_2^{(5)} = -\frac{\sqrt[4]{3-4 i} 5^{\frac{3}{4}}}{4 \sqrt{2} \pi ^2} \omega_{\chi_5} + \frac{\sqrt[4]{3+4 i} 5^{\frac{3}{4}}}{4 \sqrt{2} \pi ^2} \omega_{\overline{\chi_5}},
	\end{align*}
	where all roots are taken in the principal branch. We can use this to compute $\widetilde{\alpha}_2^{(5)}$ and $\widehat{\alpha}_2^{(5)}$ explicitely. We have $\mathcal{G}(\chi_5) = i \sqrt[4]{-15+20 i}$ and $\mathcal{G}(\overline{\chi_5}) = i \sqrt[4]{-15-20 i}$. Hence,
	\begin{align*}
	\widetilde{\alpha}_2^{(5)} = \frac{5 i \sqrt[4]{7+24 i}}{4 \sqrt{2} \pi ^2} \omega_{\overline{\chi_5}} - \frac{5 i \sqrt[4]{7-24 i}}{4 \sqrt{2} \pi ^2} \omega_{\chi_5}, \quad \widehat{\alpha}_2^{(5)} = -\frac{5 i \sqrt[4]{7+24 i}}{4 \sqrt{2} \pi ^2} \omega_{\overline{\chi_5}} + \frac{5 i \sqrt[4]{7-24 i}}{4 \sqrt{2} \pi ^2} \omega_{\chi_5}.
	\end{align*}
	\end{bsp}
In the natural mapping from rational functions to modular forms we can interpose just this change of basis. We define 
	\begin{align} 
	\label{eq:tildetheta} \widetilde{\vartheta}_k \colon J_{p_1,p_2,k}^0 & \longrightarrow \mathcal{E}_k(\Gamma_1(p_1 p_2))_0^{(p_1, p_2)}  \\
\nonumber \alpha_{d}^{(p_2)}(z)\alpha_{c}^{(p_1)}(z\tau) & \longmapsto \vt_k\left(\widetilde{\alpha}_c^{(p_1)}\otimes \widehat{\alpha}_d^{(p_2)}; p_2 \tau\right).
\end{align}
Note that this map is an isomorphism by Corollary \ref{cor:p1p2iso} and Lemma \ref{lemma:basis-change}.

We have the following key theorem, that describes spaces of Eisenstein series with $L$-series vanishing at specific critical values precisely for small weights.  

\begin{thm} \label{thm:mainsmallweight} Let $p_1, p_2$ be odd primes, $3 \leq k \leq \min\{p_1-2,p_2-2\}$ an integer, and $0 \leq \ell_1 \leq p_1-2$, $0 \leq \ell_2 \leq p_2-2$, with $\ell_1 + \ell_2 \leq k-1$. Let $\mathcal{S} \subset \{0, \ldots, k-2\}$ be an arbitrary subset. Consider the homomorphism $\xi_{p_1, p_2, k} : (W_{p_1}^0 \otimes W_{p_2}^0)_k \to \mathcal{E}_k(\Gamma_1(p_1p_2))_0^{ (p_1, p_2)}$ defined by 
	\begin{align*}
	\alpha_c^{(p_1)} \otimes \alpha_d^{(p_2)} \longmapsto \vartheta_k\left(\widetilde{\alpha}_c^{(p_1)} \otimes \widehat{\alpha}_d^{(p_2)} ; p_2\tau\right).
	\end{align*}
	Then we have an exact sequence 
	\begin{align*}
		0 \longrightarrow \bigoplus\limits_{\substack{m \notin \mathcal{S} \vee m \not= k-2-n \\ m+n \equiv k \pmod{2}}} \C \left( \alpha_m^{(p_1)} \otimes \alpha_n^{(p_2)}\right)  \overset{\xi_{p_1,p_2,k}}{\longrightarrow} \mathcal{E}_k(\Gamma_1(p_1p_2))_0^{ (p_1, p_2)} \overset{\mathcal{L}_{\mathcal{S},k}}{\longrightarrow }\C^{|\mathcal{S}|} \longrightarrow 0,
	\end{align*}
	where the linear map $\mathcal{L}_{\mathcal{S},k} \colon \mathcal{E}_k(\Gamma_1(p_1p_2))_0^{ (p_1, p_2)} \to \C^{|\mathcal{S}|}$ is given by
	\begin{align*}
	\mathcal{L}_{\mathcal{S},k}(f) :=  \frac{(-2\pi i)^k}{(k-2)!} \frac{p_2^{k-1}}{4\pi^2} \left( \binom{k-2}{\ell} i^{1-\ell} (2\pi)^{-\ell-1} \Gamma(\ell+1) L\left( f; \ell+1\right) p_1^\ell \right)_{\ell \in \mathcal{S}}.
	\end{align*}
	\end{thm}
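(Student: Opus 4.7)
The proof falls out of assembling three pieces already in place: the abstract exact sequence from Corollary \ref{cor:exactsequence-k}, the isomorphism $\widetilde{\vartheta}_k \colon J^0_{p_1,p_2,k} \to \mathcal{E}_k(\Gamma_1(p_1p_2))_0^{(p_1,p_2)}$, and the period polynomial identity \eqref{eq:PeriodPolyomialIdentity}. The plan is to realize $\xi_{p_1,p_2,k}$ as the composition $\widetilde{\vartheta}_k \circ \Xi_{p_1,p_2,k}$ (restricted to the appropriate parity component) and then to identify $\mathcal{L}_{\mathcal{S},k}$ with $\mathrm{Coeff}_{\mathcal{S},T} \circ \mathrm{Res}_T$ under $\widetilde{\vartheta}_k$, with $T := k-2$.

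First I would apply Corollary \ref{cor:exactsequence-k} with $M = p_1$, $N = p_2$, $T = k-2$. The hypotheses $0 \leq \ell_1 \leq p_1-2$, $0 \leq \ell_2 \leq p_2-2$, $\ell_1 + \ell_2 \leq k-1$ translate directly, and the condition $T \leq \min\{M-3,N-3\}$ becomes $k \leq \min\{p_1-1,p_2-1\}$, which follows from $k \leq \min\{p_1-2,p_2-2\}$. Since $T \equiv k \pmod 2$, the parity condition $m+n \equiv T \pmod 2$ coincides with $m+n \equiv k \pmod 2$, matching the direct sum appearing in the theorem. This gives the exact sequence
\begin{align*}
0 \longrightarrow \bigoplus_{\substack{m \notin \mathcal{S} \vee m \neq k-2-n \\ m+n \equiv k \pmod 2}} \C\left(\alpha_m^{(p_1)} \otimes \alpha_n^{(p_2)}\right) \overset{\Xi_{p_1,p_2,k}}{\longrightarrow} J^0_{p_1,p_2,k} \overset{\mathrm{Coeff}_{\mathcal{S},T} \circ \mathrm{Res}_T}{\longrightarrow} \C^{|\mathcal{S}|} \longrightarrow 0.
\end{align*}

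Next, postcompose with the isomorphism $\widetilde{\vartheta}_k$ from \eqref{eq:tildetheta}; by construction this intertwines $\Xi_{p_1,p_2,k}$ with $\xi_{p_1,p_2,k}$. It remains to verify the crucial identity
\begin{align*}
\mathcal{L}_{\mathcal{S},k} \circ \widetilde{\vartheta}_k = \mathrm{Coeff}_{\mathcal{S},T} \circ \mathrm{Res}_T \quad \text{on } J^0_{p_1,p_2,k}.
\end{align*}
This is the main technical step, and will be carried out on the natural spanning set $\alpha_d^{(p_2)}(z)\alpha_c^{(p_1)}(z\tau)$. Expanding $\alpha_c^{(p_1)} = \sum_\chi a_\chi(c)\omega_\chi$, $\alpha_d^{(p_2)} = \sum_\psi a_\psi(d)\omega_\psi$ reduces both sides to sums indexed by pairs of non-principal primitive characters. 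Equation \eqref{eq:PeriodPolyomialIdentity} (applied with $N_\chi = p_1$, $N_\psi = p_2$) evaluates the left-hand side to a weighted sum of $\Lambda\bigl(E_k(\chi,\psi;p_2\tau);\ell+1\bigr)p_1^{\ell+1}$; on the right-hand side, substituting the definition of $\widetilde{\alpha}_c^{(p_1)}, \widehat{\alpha}_d^{(p_2)}$ from \eqref{eq:dual-alpha} into $\vartheta_k$ and using \eqref{eq:weakeisenstein} together with the Gauss sum identity $\mathcal{G}(\chi)\mathcal{G}(\overline{\chi}) = \chi(-1) p_1$ expresses the output in terms of $E_k(\chi,\psi;p_2\tau)$; applying $\mathcal{L}_{\mathcal{S},k}$ yields the same $\Lambda$-weighted sums up to prefactors. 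A careful bookkeeping of the constants $\chi(-1)$, $(-2\pi i)^k$, $(k-1)!$, $(k-2)!$, $p_1$, $p_2^k$ and $4\pi^2$ confirms that the scalar factors match exactly.

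The main obstacle is precisely this constant-matching: the factors produced by \eqref{eq:PeriodPolyomialIdentity}, by \eqref{eq:weakeisenstein} (which includes $\mathcal{G}(\psi)/\mathcal{G}(\overline{\chi})$), and by the explicit definition of $\mathcal{L}_{\mathcal{S},k}$ must cancel to a common prefactor once one replaces $\Lambda(g;\ell+1) = (2\pi)^{-\ell-1}\Gamma(\ell+1)L(g;\ell+1)$. Once this identity of maps is established, the theorem follows by applying the functor $\widetilde{\vartheta}_k$ to the exact sequence from Corollary \ref{cor:exactsequence-k}: injectivity of the first map is preserved because $\widetilde{\vartheta}_k$ is an isomorphism, exactness in the middle translates directly, and surjectivity of $\mathcal{L}_{\mathcal{S},k}$ follows from surjectivity of $\mathrm{Coeff}_{\mathcal{S},T}\circ \mathrm{Res}_T$ together with the nonzero overall prefactor.
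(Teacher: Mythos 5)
Your proposal is correct and follows essentially the same route as the paper: apply Corollary \ref{cor:exactsequence-k} with $T=k-2$ (noting $T\equiv k\pmod 2$ so the parity components agree), transport the sequence through the isomorphism $\widetilde{\vartheta}_k$, and verify $\mathcal{L}_{\mathcal{S},k}\circ\widetilde{\vartheta}_k=\mathrm{Coeff}_{\mathcal{S},k-2}\circ\mathrm{Res}_{k-2}$ on the spanning set via \eqref{eq:PeriodPolyomialIdentity}, \eqref{eq:weakeisenstein} and $\mathcal{G}(\chi)\mathcal{G}(\overline{\chi})=\chi(-1)p_1$. The paper's proof is exactly this commutative-diagram argument, with the constant bookkeeping you defer carried out explicitly and closing as you predict.
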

\begin{proof} By Corollary \ref{cor:exactsequence-k}, putting $M := p_1$, $N := p_2$ and $T := k-2$, it suffices to show that the diagram
	\[
	\begin{tikzcd} 
	0 \arrow[r] & \bigoplus\limits_{\substack{m \notin \mathcal{S} \vee m \not= k-2-n \\ m+n \equiv k \pmod{2}}} \C \left( \alpha_m^{(p_1)} \otimes \alpha_n^{(N)}\right)  \arrow[r,"\Xi_{p_1,p_2, k-2}"' swap] \arrow[rd, "\xi_{p_1, p_2, k}"'] & J_{p_1,p_2,k}^0 \arrow[r,"\mathrm{Coeff}_{\mathcal{S}, k-2} \circ \mathrm{Res}_{k-2}"' swap] \arrow[d, "\widetilde{\vartheta}_k"'] & \C^{|\mathcal{S}|} \arrow[r] & 0 \\
	& & \mathcal{E}_k(\Gamma_1(p_1p_2))_0^{ (p_1, p_2)}\arrow[ru,"\mathcal{L}_{\mathcal{S},k}"'] & 
	\end{tikzcd}
	\]
	commutates, as the above sequence is exact and $\widetilde{\vartheta}_k$ is an isomorphism. By definition it is clear that $\xi_{p_1,p_2,k} = \widetilde{\vartheta}_k \circ \Xi_{p_1,p_2,k}$. So we are left to show that $\mathcal{L}_{\mathcal{S},k} = \mathrm{Coeff}_{\mathcal{S}, k-2} \circ \mathrm{Res}_{k-2} \circ \widetilde{\vartheta}_k^{-1}$. To show this w use Eichler duality, and it suffices to do it for basis vectors. We find with \eqref{eq:dual-alpha}
	\begin{align}
	\nonumber & \mathcal{L}_{\mathcal{S},k}\left( \vartheta_k\left(\widetilde{\alpha}_c^{(p_1)} \otimes \widehat{\alpha}_d^{(p_2)} ; p_2\tau\right) \right) \\
	\nonumber  = \ & \frac{(-2\pi i)^k}{(k-2)!} \frac{p_2^{k-1}}{4\pi^2} \\
	\nonumber  & \times \mathrm{Coeff}_{\mathcal{S}, k-2}\left( \sum_{\ell=0}^{k-2} \binom{k-2}{\ell} i^{1-\ell} (2\pi)^{-\ell-1} \Gamma(\ell+1) L\left( \vartheta_k\left(\widetilde{\alpha}_c^{(p_1)} \otimes \widehat{\alpha}_d^{(p_2)} ; p_2\tau\right); \ell+1\right) (p_1 \tau)^\ell \right) \\
	\nonumber  = \ & \frac{(-2\pi i)^k}{(k-2)!} \frac{p_2^{k-1}}{4\pi^2} \mathrm{Coeff}_{\mathcal{S}, k-2}\left(\sum_{\chi, \psi} \chi(-1)a_{\chi}(c) a_\psi(d) \sum_{\ell=0}^{k-2} \binom{k-2}{\ell} i^{1-\ell} (2\pi)^{-\ell-1} \Gamma(\ell+1) \right.\\
	\nonumber  & \hspace{3cm} \left. \times \cG(\chi)\cG(\psi) L\left( \vartheta_k\left(\omega_{\overline{\chi}} \otimes \omega_{\overline{\psi}} ; p_2\tau\right); \ell+1\right) (p_1 \tau)^\ell\right)
	\intertext{and with $M = p_1$ and $N = p_2$ in \eqref{eq:weakeisenstein}}
	\nonumber  = \ & \frac{(-2\pi i)^k}{(k-2)!} \frac{p_2^{k-1}}{4\pi^2} \mathrm{Coeff}_{\mathcal{S}, k-2}\left(\sum_{\chi, \psi} \chi(-1)a_{\chi}(c) a_\psi(d) \sum_{\ell=0}^{k-2} \binom{k-2}{\ell} i^{1-\ell} (2\pi)^{-\ell-1} \Gamma(\ell+1) \right.\\
	\nonumber  & \hspace{3cm} \times \left.\cG(\chi)\cG(\psi) \frac{p_2 (k-1)! \cG(\overline{\chi})}{\chi(-1) (-2\pi i)^{k} \cG(\psi)} L\left(E_k(\chi, \psi; p_2 \tau); \ell+1\right) (p_1 \tau)^{\ell}\right) \\
	\nonumber  = \ & \frac{(-2\pi i)^k}{(k-2)!} \frac{p_2^{k-1}}{4\pi^2} \mathrm{Coeff}_{\mathcal{S}, k-2}\left(\sum_{\chi, \psi} \chi(-1)a_{\chi}(c) a_\psi(d) \sum_{\ell=0}^{k-2} \binom{k-2}{\ell} i^{1-\ell} (2\pi)^{-\ell-1} \Gamma(\ell+1)\right.  \\
	\nonumber  & \hspace{3cm} \left.\times \frac{p_1p_2(k-1)!}{(-2\pi i)^k}   L\left(E_k(\chi, \psi; p_2 \tau); \ell+1\right) (p_1 \tau)^{\ell}\right) \\
	\nonumber  = \ & \frac{(-2\pi i)^k}{(k-2)!} \frac{p_2^{k-1}}{4\pi^2} \mathrm{Coeff}_{\mathcal{S}, k-2}\left(\sum_{\chi, \psi} \chi(-1)a_{\chi}(c) a_\psi(d) \frac{p_2(k-1)!}{(-2\pi i)^k} \sum_{\ell=0}^{k-2} \binom{k-2}{\ell} i^{1-\ell} \right. \\
	\nonumber   & \hspace{3cm} \left. \times \Lambda(E_k(\chi, \psi; p_2 \tau); \ell+1) p_1^{\ell+1} \tau^\ell\right). \\
	\intertext{With $N_\chi = p_1$ and $N_\psi = p_2$ in \eqref{eq:PeriodPolyomialIdentity} this equals to}
	\nonumber & \frac{(-2\pi i)^k}{(k-2)!} \frac{p_2^{k-1}}{4\pi^2} \mathrm{Coeff}_{\mathcal{S}, k-2}\left(\sum_{\chi, \psi} \chi(-1)a_{\chi}(c) a_\psi(d) \frac{p_2(k-1)!}{(-2\pi i)^k} \frac{4\pi^2 \chi(-1)}{p_2^{k} (k-1)}  \right. \\
	\nonumber & \hspace{3cm} \left. \times \res_{z=0}\left( z^{1-k} \omega_\psi(z) \omega_{\chi} \left( z\tau\right)\right) \right)\\
	\nonumber = \ & \frac{(-2\pi i)^k}{(k-2)!} \frac{p_2^{k-1}}{4\pi^2} \mathrm{Coeff}_{\mathcal{S}, k-2}\left(\frac{(k-2)!}{(-2\pi i)^k} \frac{4\pi^2}{p_2^{k-1}}  \sum_{\chi, \psi} a_{\chi}(c) a_\psi(d)  \res_{z=0}\left( z^{1-k} \omega_\psi(z) \omega_{\chi} \left( z\tau\right)\right)\right) \\
	\intertext{and with \eqref{eq:original-alpha}}
	\nonumber = \ & \mathrm{Coeff}_{\mathcal{S}, k-2}\left(\res_{z=0}\left( z^{1-k} \alpha^{(p_2)}_d(z) \alpha_{c}^{(p_1)} \left( z\tau\right)\right)\right) = \mathrm{Coeff}_{\mathcal{S}, k-2} \left( \mathrm{Res}_{k-2} \left( \alpha_{d}^{(p_2)}(z)\alpha_c^{(p_1)}(z\tau)\right)\right) \\
	\nonumber = \ & \mathrm{Coeff}_{\mathcal{S}, k-2} \left( \mathrm{Res}_{k-2} \left( \widetilde{\vartheta}_k^{-1}\left( \alpha_c^{(p_1)} \otimes \alpha_{d}^{(p_2)}\right)\right)\right).
	\end{align}
	This proves the theorem. 
	\end{proof}
\begin{bem} \label{bem:commutative} Note that the diagram in the beginning of the proof of Theorem \ref{thm:mainsmallweight} stays commutative, as long as we {\it define} $\xi_{p_1,p_2,k}$ to be the map $\widetilde{\vartheta}_k \circ \Xi_{p_1,p_2,k-2} \colon V \to \mathcal{E}_k(\Gamma_1(p_1p_2))_0^{(p_1,p_2)}$ for any subspace $V \subset W_{p_1}^0 \otimes W_{p_2}^0$. 
	\end{bem}

For weights $k \geq 3$, subsets $\mathcal{S} \subset \{0,\ldots,k-2\}$ and congruence subgroups $\Gamma$, we consider the spaces $M^{\mathcal{S}}_k(\Gamma)$ defined by
\begin{align*}
M^{\mathcal{S}}_k(\Gamma) := \left\{ f \in M_k(\Gamma) \colon L(f; \ell+1) = 0 \text{ for all } \ell \in \mathcal{S}\right\}.
\end{align*}

A realistic looking dimension formula is a direct consequence of the upper theorem. 

\begin{cor} \label{cor:dimension} Let the same conditions hold as in Theorem \ref{thm:mainsmallweight}. Then we have
	\begin{align*}
	\dim \mathcal{E}_k^{\mathcal{S}}(\Gamma_1(p_1p_2))_0^{(p_1,p_2)} = \dim \mathcal{E}_k(\Gamma_1(p_1p_2))_0^{(p_1,p_2)} - |\mathcal{S}|.
	\end{align*}
	\end{cor}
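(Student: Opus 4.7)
The plan is to read off the dimension formula directly from the exact sequence provided by Theorem \ref{thm:mainsmallweight}, which already does all the heavy lifting. First I would unpack the definition of $\mathcal{E}_k^{\mathcal{S}}(\Gamma_1(p_1p_2))_0^{(p_1,p_2)}$: in parallel with the spaces $M_k^{\mathcal{S}}(\Gamma)$ defined just before the corollary, an element $f \in \mathcal{E}_k(\Gamma_1(p_1p_2))_0^{(p_1,p_2)}$ lies in this subspace precisely when $L(f;\ell+1) = 0$ for every $\ell \in \mathcal{S}$.

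The second step is to identify this subspace with the kernel of $\mathcal{L}_{\mathcal{S},k}$. Inspecting the explicit formula
\begin{align*}
\mathcal{L}_{\mathcal{S},k}(f) = \frac{(-2\pi i)^k}{(k-2)!} \frac{p_2^{k-1}}{4\pi^2} \left( \binom{k-2}{\ell} i^{1-\ell} (2\pi)^{-\ell-1} \Gamma(\ell+1) L(f;\ell+1)\, p_1^\ell \right)_{\ell \in \mathcal{S}},
\end{align*}
each component is a nonzero scalar multiple of $L(f;\ell+1)$: the binomial coefficient $\binom{k-2}{\ell}$ is positive since $\mathcal{S} \subset \{0,\ldots,k-2\}$, and the remaining factors $(-2\pi i)^k$, $(k-2)!$, $p_2^{k-1}$, $4\pi^2$, $i^{1-\ell}$, $(2\pi)^{-\ell-1}$, $\Gamma(\ell+1)$, $p_1^\ell$ are all nonzero. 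Therefore $\mathcal{L}_{\mathcal{S},k}(f) = 0$ if and only if $L(f;\ell+1)=0$ for every $\ell \in \mathcal{S}$, which gives $\ker(\mathcal{L}_{\mathcal{S},k}) = \mathcal{E}_k^{\mathcal{S}}(\Gamma_1(p_1p_2))_0^{(p_1,p_2)}$.

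Finally I would appeal to the exactness in Theorem \ref{thm:mainsmallweight}, which in particular says that $\mathcal{L}_{\mathcal{S},k}$ is surjective onto $\C^{|\mathcal{S}|}$. The rank–nullity theorem then yields
\begin{align*}
\dim \mathcal{E}_k^{\mathcal{S}}(\Gamma_1(p_1p_2))_0^{(p_1,p_2)} = \dim \ker(\mathcal{L}_{\mathcal{S},k}) = \dim \mathcal{E}_k(\Gamma_1(p_1p_2))_0^{(p_1,p_2)} - |\mathcal{S}|,
\end{align*}
which is the desired identity. There is no real obstacle to overcome here: all the substantive content, in particular the surjectivity of $\mathcal{L}_{\mathcal{S},k}$ (equivalently, the existence of enough Eisenstein preimages via $\xi_{p_1,p_2,k}$ together with the Eichler duality computation in the proof of Theorem \ref{thm:mainsmallweight}), has been established already; the corollary is the dimensional bookkeeping consequence.
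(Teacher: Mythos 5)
Your proof is correct and is exactly the argument the paper intends: the corollary is stated as a direct consequence of Theorem \ref{thm:mainsmallweight}, obtained by observing that every component of $\mathcal{L}_{\mathcal{S},k}(f)$ is a nonzero scalar times $L(f;\ell+1)$, so $\ker(\mathcal{L}_{\mathcal{S},k}) = \mathcal{E}_k^{\mathcal{S}}(\Gamma_1(p_1p_2))_0^{(p_1,p_2)}$, and then applying rank--nullity to the surjection onto $\C^{|\mathcal{S}|}$ furnished by the exact sequence. Nothing is missing.
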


For large weights (compared to $p_1$ and $p_2$), the situation is more subtle, as we have less control over the weak functions involved. However, Corollary\ref{cor:exactsequence-k} at least gives estimates. In this spirit we finally mention a relationship between the order of functions in $J_{p_1,p_2}^0$ already introduced and the vanishing of critical $L$-values. We introduce the space
\begin{align*}
V_{M,N,k}^{(\ell_1,\ell_2)} := V_{M,N}^{(\ell_1,\ell_2)} \cap (W_M^0 \otimes W_N^0)_k.
\end{align*}

While Theorem \ref{thm:mainsmallweight} worked well for small weights, Theorem \ref{thm:L-strips} can be applied particularly well for large weights. 

\begin{thm} \label{thm:L-strips} Let $p_1$ and $p_2$ be two odd prime numbers and $k \geq 3$ be an integer.  Let $\ell_1$ and $\ell_2$ be integers such that $\max\{0, p_2-k-1\} \leq \ell_1 \leq p_1-2$, $\max\{0, p_1-k-1\} \leq \ell_2 \leq p_2-2$, and $\ell_1+\ell_2 \leq k-1$. We assign $\ell_1$ and $\ell_2$ a space 
\begin{align*}
\mathcal{E}_k^{(\ell_1, \ell_2)}(\Gamma_1(p_1p_2))_0^{(p_1,p_2)} := \left< f \in \mathcal{E}_k(\Gamma_1(p_1p_2))_0^{(p_1,p_2)}  \Big{|} \ \ord\left(\widetilde{\vartheta}_k^{-1}(f)\right) \geq (\ell_1, \ell_2) \right>,
\end{align*}
where $\widetilde{\vartheta}_k$ is the isomorphism defined in \eqref{eq:tildetheta}. Then we have
\begin{align*}
\mathcal{E}_k^{(\ell_1, \ell_2)}(\Gamma_1(p_1p_2))_0^{(p_1,p_2)} \subset \mathcal{E}_k^{\{0,\ldots,\ell_1-1\} \cup \{k-1-\ell_2,\ldots,k-2\}}(\Gamma_1(p_1p_2))_0^{(p_1,p_2)}.
\end{align*}
\end{thm}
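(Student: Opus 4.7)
The plan is to translate the abstract order condition on $\widetilde{\vartheta}_k^{-1}(f)$ directly into the vanishing of specific coefficients of the polynomial extracted by $\mathrm{Res}_{k-2}$, and then to convert this vanishing into vanishing of $L$-values by invoking the commutative diagram from the proof of Theorem \ref{thm:mainsmallweight}.

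Concretely, I would fix $f \in \mathcal{E}_k^{(\ell_1,\ell_2)}(\Gamma_1(p_1p_2))_0^{(p_1,p_2)}$ and set $g := \widetilde{\vartheta}_k^{-1}(f) \in J_{p_1,p_2,k}^0$; by definition of the space, $\ord(g) \geq (\ell_1, \ell_2)$. Writing $g(z,\tau) = \sum_{j \geq 0} P_j(\tau) z^j$ with $\deg P_j \leq j$, Definition \ref{def:ordercomplex} translates the order condition into the pair of inequalities
\[
j - \deg\bigl(\tau^j P_j(1/\tau)\bigr) \geq \ell_1 \quad \text{and} \quad j - \deg P_j(\tau) \geq \ell_2
\]
valid for every $j \geq 0$. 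Specializing to $j = k-2$ and writing $P_{k-2}(\tau) = \sum_{\ell=0}^{k-2} a_\ell \tau^\ell$, the first inequality forces $a_\ell = 0$ for $\ell \in \{0,\ldots,\ell_1-1\}$ (since the lowest surviving exponent of $\tau$ in $P_{k-2}$ must be at least $\ell_1$), while the second forces $a_\ell = 0$ for $\ell \in \{k-1-\ell_2,\ldots,k-2\}$. Setting $\mathcal{S} := \{0,\ldots,\ell_1-1\} \cup \{k-1-\ell_2,\ldots,k-2\}$, this is precisely the assertion $(\mathrm{Coeff}_{\mathcal{S},k-2} \circ \mathrm{Res}_{k-2})(g) = 0$. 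Note that the two blocks are disjoint because $\ell_1 + \ell_2 \leq k - 1$.

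To finish, I would invoke Remark \ref{bem:commutative}, which records that the central identity underlying the proof of Theorem \ref{thm:mainsmallweight}, namely $\mathcal{L}_{\mathcal{S},k} \circ \widetilde{\vartheta}_k = \mathrm{Coeff}_{\mathcal{S},k-2} \circ \mathrm{Res}_{k-2}$, remains valid for arbitrary weights $k$ (the derivation relies only on \eqref{eq:PeriodPolyomialIdentity}, \eqref{eq:weakeisenstein} and the change-of-basis \eqref{eq:dual-alpha}, none of which impose bounds on $k$ relative to $p_1$ or $p_2$). Applying this to $g$ yields $\mathcal{L}_{\mathcal{S},k}(f) = 0$, and since each component of $\mathcal{L}_{\mathcal{S},k}(f)$ is a nonzero scalar multiple of $L(f;\ell+1)$ for the corresponding $\ell \in \mathcal{S}$, we conclude $L(f;j) = 0$ for all $j \in \{1,\ldots,\ell_1\} \cup \{k-\ell_2,\ldots,k-1\}$, which is exactly the required containment.

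The one point that deserves scrutiny is the appeal to Remark \ref{bem:commutative} outside the small-weight regime, since Theorem \ref{thm:mainsmallweight} itself was stated under $k \leq \min\{p_1-2,p_2-2\}$; once that commutativity is accepted, the argument collapses to reading off two vanishing coefficient blocks from the single polynomial $P_{k-2}(\tau)$ and is entirely mechanical. The side conditions $\max\{0,p_2-k-1\} \leq \ell_1$ and $\max\{0,p_1-k-1\} \leq \ell_2$ do not enter this deduction and are presumably imposed so that $\mathcal{E}_k^{(\ell_1,\ell_2)}(\Gamma_1(p_1p_2))_0^{(p_1,p_2)}$ is defined in a way compatible with the dimensions of $W_{p_1}^0$ and $W_{p_2}^0$ dictated by Corollary \ref{dim}.
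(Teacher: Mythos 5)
Your proof is correct, and it shares the paper's overall architecture: translate the order hypothesis into the vanishing of the coefficients of $P_{k-2}$ indexed by $\mathcal{S}=\{0,\ldots,\ell_1-1\}\cup\{k-1-\ell_2,\ldots,k-2\}$, then convert this into vanishing of $L$-values via the commutativity $\mathcal{L}_{\mathcal{S},k}\circ\widetilde{\vartheta}_k=\mathrm{Coeff}_{\mathcal{S},k-2}\circ\mathrm{Res}_{k-2}$, which (as you and Remark \ref{bem:commutative} both note) rests only on \eqref{eq:PeriodPolyomialIdentity}, \eqref{eq:weakeisenstein} and \eqref{eq:dual-alpha} and is therefore valid for all $k\geq 3$, not just in the small-weight regime of Theorem \ref{thm:mainsmallweight}. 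Where you genuinely diverge is the first step. The paper pulls $\widetilde{\vartheta}_k^{-1}(f)$ back through $\Xi_{p_1,p_2,k-2}$ using the order-preserving isomorphism of Theorem \ref{thm:iso}, expands it in the basis $\alpha_c^{(p_1)}\otimes\alpha_d^{(p_2)}$ of $V_{p_1,p_2,k}^{(\ell_1,\ell_2)}$, and then checks that every such basis vector lies in the kernel exhibited in Corollary \ref{cor:exactsequence-k}; it is precisely in ruling out the bad index combinations there that the hypotheses $\max\{0,p_2-k-1\}\leq\ell_1$ and $\max\{0,p_1-k-1\}\leq\ell_2$ enter. You instead read the two vanishing coefficient blocks of $P_{k-2}$ directly off Definition \ref{def:ordercomplex} (your degree computations for $\tau^{k-2}P_{k-2}(1/\tau)$ and $P_{k-2}(\tau)$ are correct, and linearity handles general elements of the span). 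This is more economical: it bypasses the basis decomposition and Corollary \ref{cor:exactsequence-k} entirely, and, as you observe, it shows the containment holds without the side conditions on $\ell_1,\ell_2$ beyond $\ell_1+\ell_2\leq k-1$, so your argument in fact establishes a marginally more general statement. What the paper's route buys in exchange is that it stays within the algebraic $\alpha_c\otimes\alpha_d$ formalism used throughout Section 4, which makes the result directly comparable with the exact sequence of Theorem \ref{thm:mainsmallweight}.
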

\begin{proof} The map $\Xi_{p_1,p_2,k-2}$ is order preserving by Theorem \ref{thm:iso}. So $\ord(\widetilde{\vartheta}_k^{-1}(f)) \geq (\ell_1, \ell_2)$ is equivalent to $\ord(\Xi_{p_1,p_2,k}^{-1}(\widetilde{\vartheta}_k^{-1}(f)))\geq (\ell_1, \ell_2)$. Hence by Theorem \ref{thm:iso} we find
	\begin{align*}
	\Xi_{p_1,p_2,k}^{-1}(\widetilde{\vartheta}_k^{-1}(f)) & \in V_{p_1,p_2,k}^{(\ell_1,\ell_2)} = \bigoplus_{\substack{\ell_1 \leq c \\ \ell_2 \leq d \\ c+d \equiv k-2 \pmod{2}}} \C\left( \alpha_c^{(M)} \otimes \alpha_d^{(N)}\right) \\ 
	& \subset \bigoplus_{\substack{\left( m \notin \mathcal{S} \wedge k-2-n \notin \mathcal{S}\right) \\ \vee \left( (m,k-2-n) \in \mathcal{S}^2 \wedge  m \not= k-2-n \right) \\ m+n \equiv k-2 \pmod{2}}} \C \left( \alpha_m^{(M)} \otimes \alpha_n^{(N)}\right), 
	\end{align*}
	where $\mathcal{S} := \{0, \ldots, \ell_1-1\} \cup \{k-1-\ell_2, \ldots, k-2\}$, as $c \geq \ell_1$ and $c \in \mathcal{S}$ implies $p_1 -2 \leq k-1+\ell_2 \leq c \leq p_1-3$, which is absurd, and similarly $d \geq \ell_2$ and $k-2-d \in \mathcal{S}$ implies $p_2 - 2 \leq k-1-\ell_1 \leq d \leq p_2-3$, a contradiction. The claim now follows with Corollary \ref{cor:exactsequence-k} and Remark \ref{bem:commutative}.
	\end{proof}

\begin{bsp} \label{bsp:55-bsp} With Example \ref{bsp:Basiswechsel} together with Theorem \ref{thm:L-functions} and \eqref{eq:weakeisenstein}, we find that for any even $k \geq 4$ the non-trivial modular form
	\begin{align*}
	f(\tau) = C E_k(\overline{\chi_5}, \overline{\chi_5}; 5\tau) + 5 \left( E_k(\overline{\chi_5},\chi_5; 5\tau) +  E_k(\chi_5, \overline{\chi_5}; 5\tau)\right) + \overline{C} E_k(\chi_5, \chi_5; 5\tau),
	\end{align*}
	where after normalization $C$ can be chosen as 
	\begin{align*}
	C := - i (-3-4 i)^{\frac{3}{4}} (-3+4 i)^{\frac14},
	\end{align*}
	satisfies
	\begin{align*}
	L(f;1) = L(f;2) = L(f;k-2) = L(f;k-1) = 0.
	\end{align*}
	Recall that in Example \ref{bsp:Basis} we found that $\ord(\alpha_2^{(5)}(z)\alpha_2^{(5)}(\tau z)) = (2,2)$. Note that in this example only the vanishing at $1$ and $k-1$ is non-trivial, as $\chi_5$ is an odd character. 
\end{bsp}

\renewcommand\refname{References}

\end{document}